\documentclass{amsart}

\makeatletter
 \def\LaTeX{\leavevmode L\raise.42ex
   \hbox{\kern-.3em\size{\sf@size}{0pt}\selectfont A}\kern-.15em\TeX}
\makeatother

\newcommand{\BibTeX}{{\rm B\kern-.05em{\sc
i\kern-.025emb}\kern-.08em\TeX}}

\newtheorem{theorem}{Theorem}[section]

\newtheorem{definition}{Definition}[section]

\newtheorem{remark}{Remark}

\numberwithin{equation}{section}

\begin{document}

\title{Splines and Wavelets on Geophysically Relevant Manifolds}

\maketitle
\begin{center}

\author{Isaac Z. Pesenson }\footnote{ Department of Mathematics, Temple University,
 Philadelphia,
PA 19122; pesenson@temple.edu. The author was supported in
part by the National Geospatial-Intelligence Agency University
Research Initiative (NURI), grant HM1582-08-1-0019. }

\end{center}

{\bf 1. Introduction.}
\bigskip

{\bf 2. Manifolds and operators.}

2.1. Compact Riemannian manifolds without boundary. 

2.2. Compact homogeneous manifolds.

2.3. Bounded domains with smooth boundaries. 

2.4. Radon transform on spheres.

2.5. Hemispherical Radon transform on spheres.

2.6. Radon transform on the group of rotations $SO(3)$.
\bigskip

{\bf 3. Generalized variational splines on compact Riemannian manifolds}

3.1 Generalized interpolating variational splines.

3.2. Approximation by pointwise interpolation and approximation.

3.3. A sampling theorem and a cubature formula. 
\bigskip

{\bf 4. Bandlimited and localized Parseval frames on homogeneous manifolds }

\bigskip
{\bf 5. Applications  to the Radon transform on $S^{d}$ }

5.1. Approximate inversion of the spherical Radon transform using generalized splines.

5.2. A sampling theorem for the spherical Radon transform of bandlimited functions on $S^{d}$.

5.3. Exact formulas for Fourier coefficients  of  a bandlimited function $f$  on $S^{d}$ from  a finite number of samples of  $R f$.

\bigskip

{\bf 6. Applications to the Radon transform on $SO(3)$}

6.1. Approximate inversion of the Radon transform on $SO(3)$ using  generalized splines.

6.2. A sampling theorem for the Radon transform of bandlimited functions on $SO(3)$.

6.3. Exact formulas for Fourier coefficients  of  a bandlimited function $f$  on $SO(3)$ from  a finite number of samples of  $ \mathcal{R} f$.
\bigskip

{\bf 7. Conclusion. }
\bigskip

{\bf References.}

\section{Introduction}
 The interpolation problem on the unit two dimensional sphere $S^{2}$, the
problem of evaluating the Fourier
 coefficients of functions on $S^{2}$  and closely related
problems about approximation and cubature 
 formulas on $S^{2}$ attracted interest of many mathematicians. 
Analysis on $S^{2}$ found
many applications in  seismology, weather prediction, astrophysics, signal analysis,  computer vision, computerized tomography, neuroscience, and statistics. 
	In the last two decades, the importance of  these and other applications triggered the development of various generalized wavelet bases and frames suitable for the unit spheres $S^{2}$ and $S^{3}$ and  the rotation group of $\mathbb{R}^{3}$.  Our list of references is very far from being complete \cite{ANS}-\cite{DH}, \cite{DNW}-\cite{HNSW}, \cite{HMS}-\cite{MNW}, \cite{NW}-\cite{Pes13}. More references can be found in monographs \cite{FGS98}, \cite{FM04}, \cite{LS1}. 
Applications of frames on manifolds to scattering theory,  to statistics and cosmology can be found in  \cite{BKMP},   \cite{GM100},  \cite{M2}-\cite{M-all}, \cite{P}.

Cubature formulas on spheres and interpolation on spheres can be traced back to the classical papers by S. L. Sobolev \cite{Sob} and I. J. Schoenberg \cite{S}.
Substantial and in many ways pioneering work on splines, interpolation and approximation on spheres with many applications to geophysics was done by W. Freeden and his collaborators \cite{F78}-\cite{FS07}.  G. Wahba \cite{W1}-\cite{W2} initiated spherical splines with the aim to advance statistical analysis on spheres. Important construction of the so-called needlets on $S^{2}$ was done by F.J. Narcowich, P. Petrushev and J. Ward \cite{NPW}.

	The goal of the present study is to describe new constructions and applications  of splines and 
	 bandlimited and localized frames in a space $L_{2}(M)$, where $M$ is a compact Riemannian manifold. 
  Our article is a summary of some results for compact manifolds that were obtained in   \cite{BEP}, \cite{BP}, \cite{gpes-1}, \cite{Pes00}-\cite{Pes13}. To the best of our knowledge these are the papers which contain the most general  results about splines, frames and Shannon sampling on compact Riemannian manifolds along with applications to Radon-type transforms on manifolds.

The following classes of manifolds will be considered: compact manifolds without boundary, compact homogeneous manifolds, bounded domains with smooth boundaries in Euclidean spaces. 
	One can think of a manifold as of a surface in a high dimensional Euclidean space. A homogeneous manifold is a surface with "many" symmetries like the sphere $x_{1}^{2}+...+x_{d}^{2}=1$ in Euclidean space $\mathbb{R}^{d}$. An important example of a bounded domain is a ball $x_{1}^{2}+...+x_{d}^{2}\leq 1$  in  $\mathbb{R}^{d}$.

Here is a brief description of the paper. In section \ref{MO} we briefly introduce  situations which are considered in the paper. We introduce some very basic notions which relate to compact Riemannian manifolds and elliptic differential operators on them, compact homogeneous manifolds and Casimir operators, Radon transform (or Funk transform) on the unit sphere $S^{d}$, hemispherical transform on $S^{d}$, Radon transform on the group of rotations $SO(3)$. More details about the Radon transform on general homogeneous manifolds and in particular on $SO(3)$ can be found in \cite{H} and  in \cite{BE}-\cite{BPS},\cite{HPPSS}, \cite{K}, \cite{P}. In section \ref{MO} we formulate two our imprtant results (see \cite{gpes-1} and \cite{pg}): a Theorem \ref{cubature} about positive cubature formulas on general compact Riemnnian manifolds and Theorem \ref{prodthm} about eigenfunctions of a Casimir operator on a compact homogeneous manifold. 

In section \ref{splines} we introduce what we call generalized variational interpolating splines on compact Riemannian manifolds \cite{Pes00}-\cite{Pes07}.
We are motivated by the following problem which is of
interest for integral geometry.
Let $M, \dim M=d,$ be a Riemannian manifold and
 $M_{\nu}, \nu=1,2,...,N,$
 is a family of submanifolds
 $\dim M_{\nu}=d_{\nu}, 0\leq d_{\nu}\leq d.$ Given a
set of numbers $v_{1},v_{2},...,v_{N}$ we would like to find a
function for which
\begin{equation}\label{samples}
\int_{M_{\nu}}fdx=v_{\nu}, \>\>\>\nu=1,2,...,N.
\end{equation}

Moreover, we are interested in a "least curved" function that
satisfies the previous conditions. In other words, we seek a
function that satisfies (\ref{samples}) and minimizes the functional
$$
u\rightarrow \|(1-L)^{t/2}u\|,\>\>\>t\in \mathbb{R},
$$ 
where $L$ is a differential second order elliptic operator which is self-adjoint in the natural space $L_{2}(M)$.
Note that
in the case when the submanifold $M_{\nu}$ is a point the integral
(\ref{samples}) is understood as a value of a function at this point.

Our result is that if $s$ is a solution of such variational
problem then the distribution $(1-L)^{t}s$ should satisfy the
following distributional pseudo-differential equation on $M$ for
any $\psi\in C_{0}^{\infty}(M)$,
\begin{equation}
\int_{M}\psi  (1-L)^{t}s
dx=\sum_{\nu=1}^{N}\alpha_{\nu}\int_{M{\nu}}\psi dx,
\end{equation}
where coefficients $\alpha_{\nu}\in \mathbb{C}$ depend just on
$s$.
This equation allows one to obtain the Fourier coefficients of the
function $s$ with respect to eigen functions of the
operator $L$.

From the very definition our solution $s$ is an "interpolant" in
the sense that it has a prescribed set of integrals. Moreover, we
show that the function $s$ is not just an interpolant but also an
optimal approximation to the set of all functions $f$ in the
Sobolev space $H_{t}(M)$ that satisfy (\ref{samples}) and
\begin{equation}\label{derivative}
\|(1-L)^{t/2}f\|\leq K,
\end{equation}
for appropriate $K >0.$ Namely, we show that $s$ is the center of
the convex and bounded set of all functions that satisfy (\ref{samples}) and
(\ref{derivative}).

In subsection \ref{sampl-cub} we using splines to formulate our generalization of the Shannon's Sampling Theorem for manifolds and to introduce cubature formulas. 

In section \ref{frames} we construct bandlimited and localized  Parseval frames in $L_{2}(M)$ where $M$ is a homogeneous manifold \cite{gpes-1}, \cite{pg}. 

 Let us remind that a  set of vectors $\{\theta_{v}\}$  in a Hilbert space $H$ is called a Hilbert  frame if there exist constants $A, B>0$ such that for all $f\in H$ 
\begin{equation}\label{Frame ineq}
A\|f\|^{2}_{2}\leq \sum_{v}\left|\left<f,\theta_{v}\right>\right|^{2}     \leq B\|f\|_{2}^{2}.
\end{equation}
The largest $A$ and smallest $B$ are called respectively the lower and the upper frame bounds and the ratio $B/A$ is known as the tightness of the frame.  If $A=B$ then $\{\theta_{v}\}$ is a \textit{tight } frame, and if $A=B=1$ it is called a \textit{Parseval } frame. 
Parseval frames are similar in many respects to orthonormal bases.  For example, if all members of a Parseval frame are unit vectors then it  is an  orthonormal basis. 

According to the general theory of Hilbert frames \cite{DS}, \cite{Gr} the frame  inequality (\ref{Frame ineq})  implies that there exists a dual frame $\{\Theta_{v}\}$ (which is not unique in general) 
for which the following reconstruction formula holds 
\begin{equation}
f=\sum_{v}\left<f,\theta_{v}\right>\Theta_{v}.
\end{equation}
The important fact is that in the case of a Parseval frame one can take  $\Theta_{v}=\theta_{v}$.

Using Theorems \ref{cubature} and \ref{prodthm} we construct a Parseval frame  in $L_{2}(M)$ (where $M$ is compact and homogeneous) whose elements are bandlimited and have very strong localization on $M$. Such frame is a substitute for a wavelet basis and it is perfectly suitable to perform multiresolution  analysis on compact homogeneous manifolds.

In section \ref{Ssplines} we apply these results to Radon (Funk) transform $R$  on $S^{d}$ (see \cite{Pes04c}). In this case $M=S^{d}$, every $M_{\nu},\>\>\nu=1,...,N,$ is a great subsphere $w_{\nu}\subset S^{d}$ and $L$ is the Laplace-Beltrami operator on $S^{d}$.  The transform $R$ is invertible on the set of even functions on $S^{d}$ and it transforms even functions into even functions. The objective is to find approximate preimage  of $Rf$ where $f$ is sufficiently smooth and even by using only the set of  values $\{v_{\nu}\}_{1}^{N}$ defined in (\ref{samples}). We achieve the goal (Theorem \ref{Main-Applic}) by constructing an even smooth function $s_{t}(f)$ ($t$ is a smoothness parameter)  which interpolates $f$ in the sense that it has the same set of integrals over subspheres $\{w_{\nu}\}_{1}^{N}$. Although this interpolant is an optimal approximation to $f$ in the sense explained above we are unable to characterize this approximation quantitatively. 

In subsection \ref{pointwise} we explore a different approach. This approach seems to be more complicated but it allows to obtain a natural quantitative estimate on the rate of convergence of our approximations. We consider a set of points $\{x_{\nu}\}_{1}^{N}$ which is dual to the set of subspheres $\{w_{\nu}\}_{1}^{N}$.  Let $\rho>0$ be a separation parameter for the mesh  $\{x_{\nu}\}_{1}^{N}$. The first step now is to use values $\{Rf(x_{\nu})\}_{1}^{N}$ to construct a spline $s_{\rho, \tau}(Rf)$ ($\tau$ is a smoothness parameter) which interpolates and approximates the Radon transform $Rf$. Next, we project $s_{\rho,\tau}(Rf)$ on the space of even smooth functions to obtain even function $\widehat{s}_{\rho, \tau}(Rf)$ which is another (generally a better)  approximation to $Rf$. Now for the even function $S_{\rho, \tau}(f)=R^{-1}\widehat{s}_{\rho, \tau}(Rf)$ we are able to show (see Theorem \ref{last-thm555} ) that a difference  between   $S_{\rho, \tau}(f)$ and  $f$ is of order $\rho$ to a power.  In other words when $\rho$ goes to zero a sequence of corresponding functions $S_{\rho, \tau}(f)$ converges to $f$ with a geometrical rate. Moreover, the Theorem \ref{SSTh} shows that if $f$ is $\omega$-bandlimited and $\rho$ is small enough compare to $\omega$ than $S_{\rho, \tau}(f)$ converges to $f$ when smoothness $\tau$ goes to infinity ( for a sufficiently dense but {\bf fixed} mesh $\{x_{\nu}\}_{1}^{N}$). This result can be considered as a generalization of the Classical Sampling Theorem.

In subsection \ref{exactFCS} we are using Theorems \ref{cubature} and \ref{prodthm} to obtain exact formulas for Fourier coefficients  of  a bandlimited function $f$  on $S^{d}$ from  a finite number of samples of  $R f$.

We note that  analogous results can be obtained for the hemispherical transform on $S^{d}$ and for bounded domains in $\mathbb{R}^{n}$ with smooth boundaries (\cite{Pes04c}, \cite{Pes12}). 

In section \ref{group-applic} results which are similar to the  results of section \ref{Ssplines} obtained for the group Radon transform on $SO(3)$ (see \cite{BEP}, \cite{BP}).

\section{ Manifolds and operators}\label{MO}

We describe all situations which will be discussed in the paper.

\subsection{ Compact Riemannian manifolds without boundary.}\label{no-boundary}

Let $M$ be a compact Riemannian manifold without boundary and   $L$ be a differential second order elliptic operator which is  self-adjoint and negatively semi-definite  in the
space $L_{2}(M)$ constructed using a Riemannian density $dx$. 
The best known example of such operator is the Laplace-Beltrami which is given in a local coordinate system  by the formula
$$
\Delta
f=\sum_{m,k}\frac{1}{\sqrt{det(g_{ij})}}\partial_{m}\left(\sqrt{det(g_{ij})}
g^{mk}\partial_{k}f\right)
$$
where $g_{ij}$ are components of the metric tensor,$\>\>det(g_{ij})$ is the determinant of the matrix $(g_{ij})$, $\>\>g^{mk}$ components of the matrix inverse to $(g_{ij})$.

 In order to have an invertible operator we will work with $I-L$, where $I$ is the identity operator in $L_{2}(M)$.  It is known that for every such operator $L$ the domain of the power $(I-L)^{t/2},\>t\in \mathbb{R}$, is the Sobolev space $H_{t}(M),\>\>t\in \mathbb{R}$. There are different ways to introduce norm in Sobolev spaces. For a fixed operator $L$ we will introduce the graph norm as follows. 
\begin{definition}
\label{Sobolevnorm}
The Sobolev space $H_{t}(M), t\in \mathbb{R}$ can be introduced as the
domain of the operator $(I-L)^{t/2}$ with the graph norm

$$
\|f\|_{t}=\|(I-L)^{t/2}f\|, f\in H_{t}(M).
$$
\end{definition}

Note, that such norm depends on $L$. However, for every two differential of order two elliptic operators  such norms are equivalent for each $t\in \mathbb{R}$. 

Since the operator $(-L)$ is  self-adjoint and positive semi-definite it has a discrete
spectrum $0\leq\lambda_{0}\leq\lambda_{1}\leq \lambda_{2}\leq...,$ and
one can choose corresponding eigenfunctions $u_{0},
u_{1},...$ which form an orthonormal basis of $L_{2}(M).$ A
distribution $f$ belongs to $H_{t}(M), t\in \mathbb{R},$ if and
only if
$$
\|f\|_{t}=
\left(\sum_{j=0}^{\infty}(1+\lambda_{j})^{t}|c_{j}(f)|^{2}\right)^{1/2}<\infty,
$$
where Fourier coefficients $c_{j}(f)$ of $f$ are given by
$$
c_{j}(f)=\left<f,u_{j}\right>=\int_{M}f\overline{u_{j}}.
$$

\begin{definition}\label{bandlimited}
The span of all eigenfunctions of $-L$ whose corresponding eigenvalues are not greater than a positive $\omega$ is denoted by ${\bf E}_{\omega}(L)$ and called the set of $\omega$-bandlimited functions.
\end{definition}

\begin{definition}\label{lattice}
For a sufficiently small $\rho>0$ we will say that a finite set of points $M_{\rho}=\{x_{\nu}\}_{\nu=1}^{N}$ is a
$\rho$-lattice, if
\begin{enumerate}

\item The balls $B(x_{\nu}, \rho/2)$ are disjoint.

\item The balls $B(x_{\nu}, \rho)$ form a cover of $M$.
\end{enumerate}
\end{definition}

The following theorem holds for any compact manifold  \cite{gpes-1}, \cite{pg}.

\begin{theorem}\label{cubature} (Cubature  formula with positive weights)
There exists  a  positive constant $c=c(M)$,    such  that if  $\rho=c\omega^{-1/2}$, then
for any $\rho$-lattice $M_{\rho}=\{x_{\nu}\}_{\nu=1}^{N}$, there exist strictly positive coefficients $\mu_{\nu}>0$ for which the following equality holds for all functions in ${\bf E}_{\omega}(M)$:
\begin{equation}
\label{cubway}
\int_{M}fdx=\sum_{x_{\nu}\in M_{\rho}}\mu_{\nu}f(x_{\nu}).
\end{equation}
Moreover, there exists constants  $\  c_{1}, \  c_{2}, $  such that  the following inequalities hold:
\begin{equation}
c_{1}\omega^{-d/2}\leq \mu_{\nu}\leq c_{2}\omega^{-d/2}, \ d=dim\ M.
\end{equation}
\end{theorem}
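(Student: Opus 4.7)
The plan is to build a positive trial weight from the geometry of the $\rho$-lattice and then correct it to an exact cubature on $\mathbf{E}_{\omega}(M)$ through a self-adjoint operator that is close to the identity. First, take the Voronoi partition $\{U_\nu\}_{\nu=1}^N$ of $M$ associated to $\{x_\nu\}$; the $\rho$-lattice property gives $B(x_\nu,\rho/2)\subset U_\nu\subset B(x_\nu,\rho)$, and volume comparison on the compact manifold yields $c_1\rho^d\le |U_\nu|\le c_2\rho^d$. Set $\mu_\nu^{(0)}=|U_\nu|$: these trial weights are already of the claimed order $\omega^{-d/2}$ when $\rho=c\omega^{-1/2}$, so the two-sided weight bound will persist provided the correction keeps $\mu_\nu$ within a bounded factor of $\mu_\nu^{(0)}$.

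Next, I would prove the $L_{2}$ Marcinkiewicz--Zygmund inequality on $\mathbf{E}_\omega(M)$,
\[
\Bigl|\sum_\nu \mu_\nu^{(0)}|f(x_\nu)|^2-\|f\|_{L_{2}(M)}^2\Bigr|\le C\rho\omega^{1/2}\|f\|_{L_{2}(M)}^2,
\]
by writing the left side as $\sum_\nu\int_{U_\nu}(|f(x_\nu)|^2-|f(x)|^2)\,dx$, using the pointwise bound $|f(x)-f(x_\nu)|\le C\rho\sup_{B(x_\nu,\rho)}|\nabla f|$ for $x\in U_\nu$, and closing with the Bernstein inequality $\|\nabla f\|_{L_{2}}\le C\omega^{1/2}\|f\|_{L_{2}}$ on $\mathbf{E}_\omega(M)$ (immediate from the spectral calculus of $L$). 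Taking $c$ small in $\rho=c\omega^{-1/2}$ makes the right side $\le\epsilon\|f\|_{L_{2}}^2$ with $\epsilon<1/2$. Let $K_\omega(x,y)=\sum_{\lambda_j\le\omega}u_j(x)\overline{u_j(y)}$ be the reproducing kernel of $\mathbf{E}_\omega(M)$ and define the self-adjoint operator
\[
Tf=\sum_\nu\mu_\nu^{(0)}f(x_\nu)K_\omega(\cdot,x_\nu),\qquad T:\mathbf{E}_\omega(M)\to\mathbf{E}_\omega(M).
\]
The MZ bound reads exactly $\|T-I\|_{L_{2}\to L_{2}}\le\epsilon$, so $T$ is invertible via its Neumann series. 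Let $\hat{1}\in\mathbf{E}_\omega(M)$ denote the Riesz representer of the functional $f\mapsto\int_M f\,dx$ on $\mathbf{E}_\omega(M)$, and set $\gamma=T^{-1}\hat{1}$, $\mu_\nu=\mu_\nu^{(0)}\gamma(x_\nu)$. Self-adjointness of $T$ yields, for any $f\in\mathbf{E}_\omega(M)$,
\[
\sum_\nu\mu_\nu f(x_\nu)=\langle Tf,\gamma\rangle=\langle f,T\gamma\rangle=\langle f,\hat{1}\rangle=\int_M f\,dx,
\]
which is the cubature identity (\ref{cubway}).

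The main technical obstacle is upgrading the $L_{2}$ control on $\gamma-\hat{1}$ to a uniform $L^\infty$ bound, since that is what is needed both for strict positivity and for the lower estimate $\mu_\nu\ge c_1\omega^{-d/2}$. The crude embedding $\|h\|_\infty\le C\omega^{d/4}\|h\|_{L_{2}}$ for $h\in\mathbf{E}_\omega(M)$ (from the Weyl-type bound $K_\omega(x,x)\le C\omega^{d/2}$) loses a factor $\omega^{d/4}$ that exactly swamps the MZ gain. The repair is the uniform kernel localization $\|K_\omega(x,\cdot)\|_{L_{1}(M)}\le C$, which on a compact Riemannian manifold is available through the finite-propagation-speed calculus for $\sqrt{-L}$, the central tool underlying \cite{gpes-1} and \cite{pg}. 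Applying the cubature error pointwise to $y\mapsto g(y)K_\omega(x,y)$ and invoking this localization produces the $L^\infty$ version $\|(I-T)g\|_\infty\le C\rho\omega^{1/2}\|g\|_\infty$; iterating the Neumann series then gives $\|\gamma-\hat{1}\|_\infty\le Cc/(1-Cc)$. Because $\hat{1}$ is uniformly of order one on $M$ (it coincides with the constant function $\mathbf{1}$ whenever $\mathbf{1}\in\mathbf{E}_\omega(M)$, e.g.\ in the Laplace--Beltrami and Casimir settings driving the rest of the paper), a sufficiently small $c$ traps each $\gamma(x_\nu)$ in a fixed positive interval, forcing $\tfrac12\mu_\nu^{(0)}\le\mu_\nu\le\tfrac32\mu_\nu^{(0)}$ and hence the asserted $c_1\omega^{-d/2}\le\mu_\nu\le c_2\omega^{-d/2}$.
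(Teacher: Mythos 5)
Your first two paragraphs are sound and follow the standard route: Voronoi weights $\mu_\nu^{(0)}=|U_\nu|\asymp\rho^{d}$, the $L_{2}$ Marcinkiewicz--Zygmund estimate via Bernstein's inequality, the self-adjoint operator $T$ with $\|I-T\|_{L_{2}\to L_{2}}\le\epsilon$ (correct, since for self-adjoint operators the norm is controlled by the quadratic form), and the exact cubature identity for $\mu_\nu=\mu_\nu^{(0)}\gamma(x_\nu)$ with $\gamma=T^{-1}\hat 1$. The genuine gap is exactly at the place you yourself flagged as ``the main technical obstacle'', and your proposed repair fails: the uniform localization $\|K_\omega(x,\cdot)\|_{L_{1}(M)}\le C$ is \emph{false} for the sharp spectral projector $K_\omega(x,y)=\sum_{\lambda_j\le\omega}u_j(x)\overline{u_j(y)}$. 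Already on the circle $M=S^{1}$ with $L=d^{2}/dx^{2}$ this kernel is the Dirichlet kernel $D_{n}$, $n\sim\omega^{1/2}$, whose $L_{1}$ norm grows like $\log n$, and on higher-dimensional manifolds the growth is polynomial in $n$. Finite propagation speed for $\sqrt{-L}$ yields uniformly $L_{1}$-bounded (indeed rapidly decaying) kernels only for \emph{smooth} spectral cutoffs such as $\Phi(2^{-2j}\mathcal{L})$ --- this is precisely what Section \ref{frames} exploits for the frame functions $\psi_{j,k}$ --- never for the sharp cutoff $\chi_{[0,\omega]}(-L)$. Consequently your $L^{\infty}$ Neumann iteration does not close, and without it the $L_{2}$ bound $\|\gamma-\hat 1\|_{2}\lesssim\epsilon$ only gives $\|\gamma-\hat 1\|_{\infty}\lesssim\epsilon\,\omega^{d/4}$, which defeats positivity unless $\rho\lesssim\omega^{-1/2-d/4}$, a strictly denser lattice than the theorem permits. (A secondary issue: for a general elliptic $L$ the constant function need not belong to ${\bf E}_{\omega}(L)$, so a uniform lower bound on $\hat 1=P_{\omega}1$ would also require an argument.)

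Note that this survey does not prove the theorem; it quotes it from \cite{gpes-1}, \cite{pg}, and the proof there (following the scheme of \cite{MNW}, see also \cite{FM1}) avoids the $L^{\infty}$ problem by running the perturbation argument in $L_{1}$--$\ell^{\infty}$ duality rather than in $L_{2}$. One proves an $L_{1}$ Bernstein inequality $\|\nabla f\|_{L_{1}}\le C\omega^{1/2}\|f\|_{L_{1}}$ on ${\bf E}_{\omega}$, deduces the $L_{1}$ Plancherel--Polya inequalities and the error bound $\left|\int_{M}f\,dx-\sum_{\nu}|U_\nu|f(x_\nu)\right|\le\epsilon\sum_{\nu}|U_\nu||f(x_\nu)|$ for $\rho=c\omega^{-1/2}$ with $c$ small, and then extends the error functional by Hahn--Banach from the subspace $\{(f(x_\nu)):f\in{\bf E}_{\omega}\}$ of $\ell^{1}_{w}$ (weights $w_\nu=|U_\nu|$) to all of $\ell^{1}_{w}$ without increasing its norm. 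The extension is represented by some $\delta\in\ell^{\infty}$ with $\|\delta\|_{\infty}\le\epsilon<1$, whence $\int_{M}f\,dx=\sum_{\nu}|U_\nu|(1+\delta_\nu)f(x_\nu)$, and the weights $\mu_\nu=|U_\nu|(1+\delta_\nu)$ are automatically positive and of order $\rho^{d}\asymp\omega^{-d/2}$. In short: the correction must be produced directly in $\ell^{\infty}$ by duality, not as the sample sequence of a bandlimited function; if you upgrade your MZ inequality from $L_{2}$ to $L_{1}$ and replace your third paragraph by this Hahn--Banach step, the proof is complete.
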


\subsection { Compact homogeneous manifolds}\label{homman}

The most complete results will be obtained for compact homogeneous manifolds.

A {\it homogeneous compact manifold} $M$ is a
$C^{\infty}$-compact manifold  on which a compact
Lie group $G$ acts transitively. In this case $M$ is necessary of the form $G/K$,
where $K$ is a closed subgroup of $G$. The notation $L_{2}(M),$ is used for the usual Hilbert spaces,  with  invariant measure  $dx$ on $M$.

The Lie algebra $\textbf{g}$ of a compact Lie group $G$ 
is then a direct sum
$\textbf{g}=\textbf{a}+[\textbf{g},\textbf{g}]$, where
$\textbf{a}$ is the center of $\textbf{g}$, and
$[\textbf{g},\textbf{g}]$ is a semi-simple algebra. Let $Q$ be a
positive-definite quadratic form on $\textbf{g}$ which, on
$[\textbf{g},\textbf{g}]$, is opposite to the Killing form. Let
$X_{1},...,X_{n}$ be a basis of
$\textbf{g}$, which is orthonormal with respect to $Q$.
 Since the form $Q$ is $Ad(G)$-invariant, the operator
\begin{equation}\label{Casimir}
-X_{1}^{2}-X_{2}^{2}-\    ... -X_{n}^{2},    \ n=dim\ G
\end{equation}
is a bi-invariant operator on $G$, which is known as the {\it Casimir operator}.
This implies in particular that the corresponding operator on $L_{2}(M)$,
\begin{equation}\label{Casimir-Image}
\mathcal{L}=-D_{1}^{2}- D_{2}^{2}- ...- D_{n}^{2}, \>\>\>
       D_{j}=D_{X_{j}}, \        n=dim \ G,
\end{equation}
commutes with all operators $D_{j}=D_{X_{j}}$.
The operator $\mathcal{L}$, which is usually called the {\it Laplace operator}, is
the image of the Casimir operator under differential of quazi-regular representation in $L_{2}(M)$. It is important to realize that in general, the operator $\mathcal{L}$ is not necessarily the {\it Laplace-Beltrami operator} of the natural  invariant metric on $M$. But it coincides with such operator at least in the following cases:
1) If $M$ is a $d$-dimensional torus, 2) If the manifold $M$ is itself a compact  semi-simple Lie group group $G$ (\cite{H3}, Ch. II), 3) If $M=G/K$ is a compact symmetric space of
  rank one (\cite{H3},
Ch. II, Theorem 4.11).

The following important result was obtained in \cite{gpes-1}, \cite{pg}.

\begin{theorem}(Product property)
\label{prodthm}
If $M=G/H$ is a compact homogeneous manifold and $L$
 is the same as above, then for any $f$ and $g$ belonging
to ${\bf E}_{\omega}(\mathcal{L})$,  their product $fg$ belongs to
${\bf E}_{4m\omega}(\mathcal{L})$, where $m$ is the dimension of the
group $G$.

\end{theorem}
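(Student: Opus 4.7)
The plan is to characterize bandlimited functions spectrally and then to bound $\|\mathcal{L}^k(fg)\|_2$ via an iterated Leibniz expansion together with a Bernstein-type inequality. Since $\mathcal{L}$ is self-adjoint with nonnegative discrete spectrum $\{\lambda_l\}$ and orthonormal eigenbasis $\{u_l\}$, expanding $h=\sum c_l u_l$ immediately gives
$$
h \in {\bf E}_\omega(\mathcal{L}) \iff \limsup_{k\to\infty} \|\mathcal{L}^k h\|_2^{1/k} \leq \omega,
$$
so it suffices to show $\|\mathcal{L}^k(fg)\|_2 \leq C(\omega)(4m\omega)^k\|f\|_2\|g\|_2$ with $C(\omega)$ independent of $k$.

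The structural fact I would exploit is that the Casimir is central in the universal enveloping algebra, so $[\mathcal{L},D_j]=0$ and each $D_j$ preserves ${\bf E}_\omega(\mathcal{L})$. Combining this with $D_j^{*}=-D_j$ and $\sum_{j=1}^{m}\|D_j h\|_2^{2}=\langle \mathcal{L}h,h\rangle$ yields, for $h\in{\bf E}_\omega(\mathcal{L})$, the Bernstein-type bound $\|D_j h\|_2\leq \omega^{1/2}\|h\|_2$. Iterating gives
$$
\|D_{j_1}\cdots D_{j_r}h\|_2 \leq \omega^{r/2}\|h\|_2, \qquad D_{j_1}\cdots D_{j_r}h \in {\bf E}_\omega(\mathcal{L}),
$$
and Sobolev embedding applied to this bandlimited function supplies $\|D_{j_1}\cdots D_{j_r}h\|_\infty \leq C(\omega)\,\omega^{r/2}\|h\|_2$ with $C(\omega)$ polynomial in $\omega$.

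Next I would expand
$$
\mathcal{L}^k(fg) = (-1)^k \sum_{(j_1,\dots,j_k)\in\{1,\dots,m\}^k} D_{j_1}^{2}\cdots D_{j_k}^{2}(fg),
$$
treat each $D_{j_l}^{2}$ as a composition $D_{j_l}\circ D_{j_l}$, and apply the Leibniz rule to each of the $2k$ first-order factors separately. This produces $(4m)^{k}$ terms altogether, each of the form $(Pf)(Qg)$, where $P$ is an ordered composition of $r$ of the operators $D_{j_l}$ and $Q$ is the ordered composition of the remaining $2k-r$. Using the Bernstein-Sobolev bounds above,
$$
\|(Pf)(Qg)\|_2 \leq \|Pf\|_\infty\,\|Qg\|_2 \leq C(\omega)\,\omega^{r/2}\|f\|_2 \cdot \omega^{(2k-r)/2}\|g\|_2 = C(\omega)\,\omega^{k}\|f\|_2\|g\|_2.
$$
Summing the $(4m)^k$ terms gives $\|\mathcal{L}^k(fg)\|_2\leq C(\omega)(4m\omega)^k\|f\|_2\|g\|_2$, and extracting $k$-th roots places $fg$ in ${\bf E}_{4m\omega}(\mathcal{L})$.

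The hard part is really the bookkeeping of the Leibniz expansion, together with making sure that each intermediate product $Pf$ stays inside ${\bf E}_\omega(\mathcal{L})$ so the Bernstein inequality can be iterated; this relies crucially on the centrality of $\mathcal{L}$ in the universal enveloping algebra. The counting of $(4m)^k$ terms (a factor $m$ per choice of $j_l$, and a factor $4=2\cdot 2$ per $D_{j_l}^{2}$ from the two Leibniz decisions for each of its two first-order factors) is the combinatorial heart of the bound, while the polynomial factor $C(\omega)$ from the Sobolev embedding is absorbed painlessly under the $k$-th root.
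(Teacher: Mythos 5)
Your proposal is correct and follows essentially the same route as the proof in the cited sources \cite{gpes-1}, \cite{pg}: commutation of $\mathcal{L}$ with the $D_j$ to keep everything inside ${\bf E}_{\omega}(\mathcal{L})$, the Bernstein inequality $\|D_{j_1}\cdots D_{j_r}h\|_{2}\leq \omega^{r/2}\|h\|_{2}$ combined with Sobolev embedding for the sup-norm bound, the Leibniz expansion of $\mathcal{L}^{k}(fg)$ into $(4m)^{k}$ terms, and the spectral characterization of bandlimitedness via $\limsup_{k}\|\mathcal{L}^{k}(fg)\|_{2}^{1/k}$. The bookkeeping, including the factor $4=2\cdot 2$ per $D_{j_l}^{2}$ and the absorption of $C(\omega)$ under the $k$-th root, matches the original argument.
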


In the case when $M$ is the rank one compact symmetric space (in particular, the sphere $S^{d}$ or any of the projective spaces $P^{d},\>\mathbb{C}P^{d},\>\mathbb{Q}P^{d}$) there is  a better result.

\begin{theorem}
\label{prodthm-2}
If $M=G/H$ is a compact symmetric space of rank one   then for any $f$ and $g$ belonging
to ${\mathbf E}_{\omega}(\mathcal{L})$,  their product $fg$ belongs to
${\mathbf E}_{2\omega}(\mathcal{L})$.

\end{theorem}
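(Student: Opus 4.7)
The plan is to combine the representation-theoretic decomposition of $L_2(M)$ for a rank-one compact symmetric space with an explicit Clebsch--Gordan type estimate. For $M=G/H$ of rank one, the quasi-regular representation of $G$ on $L_2(M)$ is multiplicity-free, giving an orthogonal decomposition
$$
L_2(M)=\bigoplus_{k=0}^{\infty}H_k,
$$
where each $H_k$ is $G$-irreducible (the class-one representation of degree $k$) and $\mathcal{L}$ acts on $H_k$ as a scalar $\lambda_k$, strictly increasing and quadratic in $k$. For example $\lambda_k=k(k+d-1)$ on $S^d$, and analogous explicit formulas hold on $P^d$, $\mathbb{C}P^d$, $\mathbb{Q}P^d$, and the Cayley plane. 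Consequently $\mathbf{E}_\omega(\mathcal{L})=\bigoplus_{\lambda_k\leq\omega}H_k$.

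The central step is the product rule
$$
H_k\cdot H_l\subset\bigoplus_{m=|k-l|}^{k+l}H_m.
$$
Lifting $f_k\in H_k$ and $g_l\in H_l$ to right-$H$-invariant functions on $G$ realizes them as matrix coefficients of the class-one irreducibles $\pi_k$ and $\pi_l$; their pointwise product corresponds to matrix coefficients of $\pi_k\otimes\pi_l$, and passing back to $M$ by right-$H$-invariance retains only the class-one summands. The classical Clebsch--Gordan rule for rank-one spaces---equivalently, the linearization formula for the Jacobi polynomials that realize the zonal spherical functions of $M$---shows that the class-one components of $\pi_k\otimes\pi_l$ all have degree $m$ with $|k-l|\leq m\leq k+l$.

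Combining these ingredients, for $f,g\in\mathbf{E}_\omega(\mathcal{L})$ one obtains $fg\in\bigoplus_{m\leq 2K}H_m$, where $K$ is the largest integer with $\lambda_K\leq\omega$. The theorem then reduces to the eigenvalue inequality $\lambda_{2K}\leq 2\omega$, which one verifies by a direct computation from the explicit quadratic formula for $\lambda_k$ on each rank-one family. The main obstacle I anticipate is the uniform formulation of the product rule across all five families: the spherical case is immediate from the standard product expansion of spherical harmonics, but the projective-space and Cayley-plane cases require more care in identifying the class-one components of tensor products. If the direct approach becomes cumbersome, a cleaner workaround is to reduce first to the $H$-bi-invariant (zonal) setting---where the multiplicative structure is encoded in the classical linearization formulas for Jacobi polynomials---and then use $G$-equivariance to extend the degree bound from zonal functions to all of $H_k\cdot H_l$.
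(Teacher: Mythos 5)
Your structural setup is the natural one and is essentially correct: for a rank-one compact symmetric space the quasi-regular representation is multiplicity-free, $L_2(M)=\bigoplus_k H_k$ with $\mathcal{L}$ acting on $H_k$ by a scalar $\lambda_k$ quadratic in $k$, and the linearization (Clebsch--Gordan/Jacobi) rule $H_k\cdot H_l\subset\bigoplus_{m\le k+l}H_m$ is exactly the right key lemma (the survey states Theorem \ref{prodthm-2} without proof, so this is the argument any proof must run through). The gap is in your last step: the inequality $\lambda_{2K}\le 2\omega$, which you claim ``one verifies by a direct computation from the explicit quadratic formula,'' is false --- the direct computation refutes it. Writing $\lambda_k=ak^2+bk$ with $a>0$, $b\ge 0$ (e.g.\ $\lambda_k=k(k+d-1)$ on $S^d$), one has $\lambda_{2K}=4aK^2+2bK$, while in the worst case $\omega=\lambda_K=aK^2+bK$; then $\lambda_{2K}\le 2\omega$ would force $4aK^2\le 2aK^2$, impossible for $K\ge 1$. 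What degree doubling actually yields, under the paper's Definition \ref{bandlimited} (band-limitation measured by the eigenvalue itself), is $\lambda_{2K}\le 4\lambda_K\le 4\omega$, i.e.\ $fg\in{\bf E}_{4\omega}(\mathcal{L})$, and the factor $4$ is sharp asymptotically. Concretely, on $S^2$ take $f=g=\cos\theta\in{\bf E}_{2}(\mathcal{L})$, so $\omega=2$; then $f^2=\tfrac{1}{3}+\tfrac{2}{3}P_2(\cos\theta)$ has a nonzero component of eigenvalue $6>2\omega=4$, so $f^2\notin{\bf E}_{2\omega}(\mathcal{L})$.

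Consequently no argument can close your final step as written: with eigenvalue-scale band-limitation the constant must be $4$, not $2$. The constant $2$ is compatible with degree doubling only on the scale of $\sqrt{\lambda}$ (equivalently, of polynomial degree), where indeed $\sqrt{\lambda_{2K}}=\sqrt{4aK^2+2bK}\le 2\sqrt{aK^2+bK}=2\sqrt{\lambda_K}$. So your proof becomes correct verbatim if you either (i) prove the statement with ${\bf E}_{4\omega}(\mathcal{L})$ in place of ${\bf E}_{2\omega}(\mathcal{L})$ under the paper's convention, or (ii) read ${\bf E}_{\omega}$ as the span of eigenfunctions with $\lambda\le\omega^2$, which is the convention under which a ``$2\omega$'' product theorem holds. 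As the proposal stands, it asserts a verification that fails, and that is a genuine gap, not a presentational slip.
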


\subsection{Bounded domains with smooth boundaries}\label{Dir}
 
 Our consideration includes also  an open  bounded domain $M\subset \mathbb{R}^{d}$ with a smooth boundary $\Gamma$ which is a smooth $(d-1)$-dimensional oriented manifold.  Let $\overline{M}=M\cup \Gamma$ and $L_{2}(M)$ be  the  space of functions  square-integrable with respect to the Lebesgue  measure $dx=dx_{1}...dx_{d}$ with the norm denoted as $\|\cdot \|$. If $k$ is a natural number the notations $H^{k}(M)$ will be  used for  the Sobolev space of distributions on $M$ with the norm
 $$
 \|f\|_{H^{k
 }(M)}=\left(\|f\|^{2}+\sum _{1\leq |\alpha |\leq k}\|\partial^{|\alpha|} f\|^{2}\right)^{1/2}
 $$
where $\alpha=(\alpha_{1},...,\alpha_{d})$ and $\partial^{|\alpha|}$ is a mixed partial derivative 
$$
\left(\frac{\partial}{\partial x_{1}}\right)^{\alpha_{1}}...\left(\frac{\partial}{\partial x_{d}}\right)^{\alpha_{d}}.
$$
Under our assumptions the space $C^{\infty}_{0}(\overline{M})$ of infinitely smooth functions with support in $\overline{M}$  is dense  
in $H^{k}(M)$. Closure in $H^{k}(M)$ of the space   $C_{0}^{\infty}(M)$ of smooth functions with support in $M$ will be denoted as $H_{0}^{k}(M)$.

 Since $\Gamma$ can be treated as a smooth Riemannian manifold one can introduce Sobolev scale of spaces $H^{s}(\Gamma),\>\> s\in \mathbb{R},$ as, for example, the domains of the Laplace-Beltrami operator $L$ of a Riemannian metric on $\Gamma$.
 
 According to the trace theorem there exists a well defined continuous surjective trace operator 
 $$
 \gamma: H^{s}(M)\rightarrow H^{s-1/2}(\Gamma),\>\>s>1/2,
 $$
 such that for all functions $f$  in $H^{s}(M)$ which are smooth up to the boundary the value $\gamma f$ is simply a restriction of $f$ to $\Gamma$.
 One considers the following  operator 
 \begin{equation}\label{Op}
 Pf=\sum_{j,k}\partial_{j}\left(a_{j,k}(x)\partial_{k}f\right),
\end{equation}
with coefficients in $C^{\infty}(M)$ where the matrix $(a_{j,k}(x))$ is real, symmetric  and negatively  definite on $\overline{M}$.
 The operator $L$ is defined as the Friedrichs extension of $P$, initially defined on $C_{0}^{\infty}(M)$, to the set of all functions $f$ in $H^{2}(M)$ with constraint $\gamma f=0$. The Green formula implies that this operator is self-adjoint. It is also a positive operator and the domain of its positive square root $(-L)^{1/2}$ is the set of all functions $f$ in $H^{1}(M)$ for which $\gamma f=0$. 
   Thus, one has a self-adjoint positive definite operator $(-L)$  in the Hilbert space $L_{2}(M)$ with a discrete spectrum $0<\lambda_{1}\leq \lambda_{2},...$ which goes to infinity.  

One of the most important examples of such domain and operator is the unit ball in $\mathbb{R}^{d}$ with the regular Laplace operator on it. In this case the eigenvalues are 
 $
j^{2}_{d+\frac{k-2}{2},\>l},
 $
 where $j_{\nu,\>l}$ is $l$-th positive root of  the Bessel function $J_{\nu}$ of first kind and of order $\nu$.
The set of eigenfunctions given in spherical coordinates by the formulas
 \begin{equation}\label{EF}
 \varphi_{d,\>i,\>l}=c_{d,i,l}\rho^{-\frac{k-2}{2}}J_{d+\frac{k-2}{2}}\left(j_{d+\frac{k-2}{2}, \>l}\>\>\rho\right)Y^{i}_{d,k}(\vartheta),
\end{equation}
 where $k=0,1,...,\>\>1\leq i\leq n_{d}(k),\>\> l=1,2,....$. Constants $c_{d,\>i,\>l}$ can be chosen in a way that makes functions $\varphi_{d,\>i,\>l}$ normal.

\subsection{Radon transform on spheres}\label{Radon-spheres}

 We consider the unit sphere
$S^{d}\subset \mathbb{R}^{d+1}$ and the corresponding space
$L_{2}(S^{d})$ constructed with respect to normalized and
rotation-invariant measure. 

Let  $Y^{i}_{k}$ be an orthonormal basis of spherical
harmonics in the space $L_{2}(S^{d})$, where $k=0,1, ... ; i=
1,2,..., n_{d}(k)$ and
$$
n_{d}(k)=(d+2k-1)\frac{(d+k-2)!}{k!(d-1)!}
$$
is the dimension of the subspace of spherical harmonics of degree
$k$. 
Note, that 
\begin{equation}\label{symm}
Y_{k}^{i}(-x)=(-1)^{k}Y_{k}^{i}(x).
\end{equation}
The Fourier decomposition of $f\in L_{2}(S^{d})$ is
\begin{equation}\label{Fourier-Laplace}
f=\sum_{i,k}c_{i,k}(f)Y^{i}_{k},
\end{equation}
where
$$
c_{i,k}(f)=\int_{S^{d}}f\overline{Y^{i}_{k}}dx=\left<f,Y_{k}^{i}\right>_{L_{2}(S^{d})}.
$$
 To every function  $f\in L_{2}(S^{d})$ the spherical Radon transform associates  its integrals over great subspheres:
$$
Rf(\theta^{\perp}\cap S^{d})=\int_{\theta^{\perp}\cap S^{d}}fdx,
$$
where $ \theta^{\perp}\cap S^{d}$ is the great subsphere of $S^{d}$
whose plane has normal $\theta$.

If a function $f\in L_{2}(S^{d})$ has Fourier coefficients
$c_{i,k}(f)$ then its Radon Transform is given by the formula
$$
R(f)=\pi^{-1/2}\Gamma((d+1)/2)\sum_{i,k}r_{k}c_{i,k}(f)Y_{k}^{i}.
$$
where $Y_{k}^{i}$ are the spherical harmonic polynomials and
\begin{equation}\label{r}
r_{k}=(-1)^{k/2}\Gamma((k+1)/2))/\Gamma((k+d)/2)
\end{equation}
if $k$ is even and $r_{k}=0$ if $k$ is odd. It implies in particular that operators $\Delta$ and $R$ commute on a set of smooth functions.  A function $f \in L_{2}(S^{d})$ is said to be even if its Fourier series (\ref{Fourier-Laplace}) contains only harmonics of even degrees $k=2m$.
Because the coefficients $r_{k}$ have asymptotics
$(-1)^{k/2}(k/2)^{(1-d)/2}$ as $k$ goes to infinity we have the following result.

\begin{theorem}\label{SRT}
The spherical Radon transform $R$ is a continuous operator from the Sobolev space of even functions
$H^{even}_{t}(S^{d})$ onto the space $H^{even}_{t+(d-1)/2}(S^{d})$.
Its inverse $R^{-1}$ is a continuous operator from the space
$H^{even}_{t+(d-1)/2}(S^{d})$ onto the space $H^{even}_{t}(S^{d})$.  If $f\in H^{even}_{t+(d-1)/2}(S^{d})$ and it has Fourier series $f=\sum_{i, m}c_{i, 2m}(f)Y_{2m}^{i}$ then 
\begin{equation}\label{inverse}
R^{-1}f=\frac{\sqrt{\pi}}{\Gamma((d+1)/2)}\sum_{i,m}\frac{c_{i,2m}(f)}{r_{2m}}Y_{2m}^{i}.
\end{equation}

\end{theorem}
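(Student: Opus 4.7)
The plan is to read off the action of $R$ on the Fourier side, then use the stated asymptotics of $r_k$ to convert the $(d-1)/2$ gain in the Sobolev exponent into a norm equivalence, and finally invert term-by-term.

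First I would use the explicit formula $R(f)=\pi^{-1/2}\Gamma((d+1)/2)\sum_{i,k} r_k c_{i,k}(f)Y_k^i$ together with orthonormality of the $Y_k^i$ to identify the Fourier coefficients of $Rf$ as $c_{i,k}(Rf)=\pi^{-1/2}\Gamma((d+1)/2) r_k c_{i,k}(f)$. Since $r_k=0$ for odd $k$, the operator $R$ sends even functions to even functions and kills the odd part; working throughout on $H^{even}_t(S^d)$ removes the kernel. Using the Sobolev norm $\|f\|_t^2=\sum_{i,k}(1+\lambda_k)^t|c_{i,k}(f)|^2$ with $\lambda_k=k(k+d-1)$ (so that $1+\lambda_k\asymp k^2$ for $k\geq 1$), I get
\begin{equation*}
\|Rf\|_{t+(d-1)/2}^2 = \frac{\Gamma((d+1)/2)^2}{\pi}\sum_{i,\,k\ \mathrm{even}}(1+\lambda_k)^{t+(d-1)/2}\,|r_k|^2\,|c_{i,k}(f)|^2.
\end{equation*}

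Next I would exploit the asymptotics $r_k\sim(-1)^{k/2}(k/2)^{(1-d)/2}$ for even $k\to\infty$, which follows from Stirling's formula applied to the ratio $\Gamma((k+1)/2)/\Gamma((k+d)/2)$. Combined with $(1+\lambda_k)^{(d-1)/2}\asymp k^{d-1}$, this gives
\begin{equation*}
(1+\lambda_k)^{(d-1)/2}|r_k|^2 \;\asymp\; k^{d-1}\cdot k^{1-d} \;=\; 1
\end{equation*}
uniformly in even $k$, with finitely many low-frequency terms handled separately (recall $r_{2m}\neq 0$ for every $m$ since $\Gamma$ has no zeros). Therefore there exist constants $0<A\leq B$ with $A(1+\lambda_k)^t\leq (1+\lambda_k)^{t+(d-1)/2}|r_k|^2\leq B(1+\lambda_k)^t$ for all even $k$, and the displayed identity yields the two-sided bound $A'\|f\|_t^2\leq \|Rf\|_{t+(d-1)/2}^2\leq B'\|f\|_t^2$ on $H^{even}_t(S^d)$.

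Finally, the two-sided bound shows $R$ is bounded and has closed range on the even Sobolev scale, and because each $r_{2m}$ is nonzero the map $c_{i,2m}(f)\mapsto c_{i,2m}(Rf)$ is a bijection between the Fourier sequences of even functions. This immediately gives surjectivity and the explicit inverse in formula (\ref{inverse}): given $g=\sum_{i,m}c_{i,2m}(g)Y_{2m}^i$ in $H^{even}_{t+(d-1)/2}(S^d)$, the candidate preimage $\frac{\sqrt{\pi}}{\Gamma((d+1)/2)}\sum_{i,m}\frac{c_{i,2m}(g)}{r_{2m}}Y_{2m}^i$ is checked to lie in $H^{even}_t(S^d)$ by the same weight comparison, and applying $R$ term-by-term recovers $g$.

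The only nontrivial step is the asymptotic estimate on $r_k$; once that is in hand, everything reduces to comparing weight sequences. I would expect the main care to be taken with uniformity of the constants down to small $k$ (where the asymptotic is not sharp), which is handled by treating finitely many indices by hand and noting that none of the relevant factors vanish.
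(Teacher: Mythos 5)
Your proposal is correct and follows essentially the same route as the paper: the paper derives the theorem directly from the Fourier multiplier representation $R(f)=\pi^{-1/2}\Gamma((d+1)/2)\sum_{i,k}r_k c_{i,k}(f)Y_k^i$ together with the asymptotics $r_k\sim(-1)^{k/2}(k/2)^{(1-d)/2}$, which is exactly your weight-comparison argument. You merely make explicit the details (Stirling, two-sided bounds, separate treatment of low frequencies) that the paper leaves implicit.
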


\subsection{Hemispherical Radon transform on $S^{d}$}\label{hem}

 To every function  $f\in L_{2}(S^{d})$ the hemispherical transform $T$
 assigns a function $Tf\in L_{2}(S^{d})$ on the dual sphere $S^{d}$
 which is given
by the formula
$$
(Tf)(\xi)=\int_{\xi\cdot x >0} f(x)dx.
$$

For every function $f\in L_{2}(S^{d})$ that has Fourier
coefficients $c_{i,j}(f)$ the hemispherical transform can be given
explicitly by the formula
$$
Tf=\pi^{(d-1)/2}\sum_{i,k}m_{k}c^{i}_{k}(f)Y^{i}_{k},
$$
where $m_{k}=0, $ if $k$ is even and
$$
m_{k}=(-1)^{(k-1)/2}\frac {\Gamma(k/2)}{\Gamma((k+d+1)/2))},
$$
 if $k$ is odd.

The transformation $T$ is one to one on the subspace of odd
functions (i.e. $f(x)=-f(-x))$ of a Sobolev space
$H_{t}^{odd}(S^{d})$ and maps it continuously onto
$H_{t+(d+1)/2}^{odd}(S^{d})$,
$$
T(H_{t}^{odd})(S^{d})=H_{t+(d+1)/2}^{odd}(S^{d}).
$$

\subsection{Radon transform on the group of rotations  $SO(3)$}

The group  of rotations $SO(3)$ of $\mathbb{R}^{3}$ consists of $3\times 3$ real matrices $U$ such that  $U^TU = I,\>\>\ {\rm det\,}U = 1$.
It is known that any $g\in SO(3)$ has a unique representation of the form
$$ 
g = Z(\gamma)X(\beta)Z(\alpha),\ 0\leq \beta \leq \pi,\ 0\leq \alpha,\,\gamma < 2\pi,
$$
 where 
$$ Z(\theta) = \left(\begin{array}{ccc} \cos \theta & -\sin \theta & 0 \\ \sin \theta & \cos \theta & 0 \\ 0 & 0 & 1 \end{array}\right), \quad \mbox X(\theta) = \left(\begin{array}{ccc} 1 & 0 & 0 \\ 0 & \cos \theta & -\sin \theta  \\ 0 & \sin \theta & \cos \theta \end{array}\right) 
$$
are  rotations about the $Z$- and $X$-axes, respectively. In the coordinates $\alpha, \beta, \gamma$, which are known as Euler angles, the Haar measure  of the group $SO(3)$ is given as 
$$
dg= \frac{1}{8\pi^2} \sin\beta d\alpha\,d\beta\,d\gamma. 
$$ 
In other words the following formula holds:
$$ \int_{SO(3)} f(g)\,dg = \int_0^{2\pi}\int_0^{\pi}\int_0^{2\pi} f(g(\alpha,\,\beta,\,\gamma))\frac{1}{8\pi^2} \sin\beta d\alpha\,d\beta\,d\gamma.
$$ 
Note that if $SO(2)$ is the group of rotations of $\mathbb{R}^{2}$ then the two-dimensional sphere $S^{2}$ can be identified with the factor $SO(3)/SO(2)$.

We introduce Radon transform $\mathcal{R} f$ of a smooth function $f$ defined on $SO(3)$.\begin{definition}\label{groupRT}
If $S^{2}$ is the standard unit sphere in $\mathbb{R}^{3}$ , then for a pair $(x,y)\in S^{2}\times S^{2}$ the value of the  Radon transform $\mathcal{R} f$ at $(x,y)$ is defined by the formula
\begin{equation}\label{RRR}
 (\mathcal{R} f)(x,y)   = \frac{1}{2\pi} \int_{\{g\in SO(3): x=gy\}} f(g) d\nu_g = 
 $$
 $$
 4\pi \int_{SO(3)} f(g)\delta_y(g^{-1}x) dg = (f*\delta_y)(x),\>\>\> (x,y)\in S^{2}\times S^{2},
\end{equation}
where $d\nu_g = 8\pi ^2 dg, $  and  $\delta_{y}$ is the measure concentrated on the set of all $g\in SO(3)$ such that $x=gy$. 
\end{definition}

An orthonormal system in $L_2(S^2)$ is provided by the spherical harmonics $\{Y_k^i,\,k\in \mathbb{N}_0,\ i=1,\ldots , 2k+1\}.$ The subspaces  $\mathcal{H}_k:={\rm span}\,\{Y_k^i, i=1,\,\ldots ,\,2k+1\}$ spanned by the spherical harmonics of degree $k$ are the invariant subspaces of the quasi-regular representation 
$ T(g):\,f(x)\mapsto f(g^{-1}\cdot x), $
(where $\cdot $ denotes the canonical action of $SO(3)$ on $S^2$). Representation $T$ decomposes into $(2k+1)$-dimensional irreducible representation $\mathcal{T}_k$ in $\mathcal{H}_k.$
The corresponding matrix coefficients are the Wigner-polynomials
$$ \mathcal{T}_k^{ij}(g) = \langle \mathcal{T}_k(g)Y_k^i, Y_k^j \rangle. $$
If $\Delta_{SO(3)}$ and $\Delta_{S^{2}}$ are Laplace-Beltrami operators of invariant metrics on $SO(3)$ and $S^{2}$ respectively, then 
\begin{equation}\label{eigenvalues}
 \Delta_{SO(3)}\mathcal{T}_k^{ij} = -k(k+1)\mathcal{T}_k^{ij}\quad \mbox{and}\quad \Delta_{S^2}Y_k^i = -k(k+1)Y_k^i. 
\end{equation}
 Using  the fact that $\Delta_{SO(3)}$ is equal to $-k(k+1)$ on the eigenspace $\mathcal{H}_k$ we obtain 
$$ ||f||^2_{L_2(SO(3))} =  \sum_{k=1}^{\infty} (2k+1) ||(4\pi)^{-1}\hat{f}(k)||^2_{L_2(S^2\times S^2)} = 
$$
$$
||(4\pi)^{-1}(I-2\Delta_{S^{2}\times S^{2}})^{1/4}\mathcal{R} f||^2_{L_2(S^2\times S^2)} , $$
where $\Delta_{S^{2}\times S^{2}}=\Delta_1 + \Delta_2 $ is the Laplace-Beltrami  operator of the natural metric on $S^2\times S^2.$ 
We define the following norm on the space  $ C^{\infty}(S^2\times S^2)$
$$
 ||| u |||^2 = ((I-2\Delta_{S^2\times S^2} )^{1/2}u,\,u)_{L_2(S^2\times S^2)}.
 $$
Because $\mathcal{R}$ is essentially an isometry between $L_{2}(SO(3))$ with the natural norm and $L_{2}(S^{2}\times S^{2})$ with the norm $|||\cdot|||$ the inverse of $\mathcal{R}$  is given by  its  adjoint operator. 
To calculate the adjoint operator we express the Radon transform $\mathcal{R}$ in another way. 
Going back to our problem in crystallography we first state that the great circle $C_{x,y}=\{g\in SO(3): g\cdot x = y\}$ in $SO(3)$ can also be described by the following formula
$$ C_{x,y}= x^{\prime}SO(2)(y^{\prime})^{-1}  := \{x^{\prime}h(y^{\prime})^{-1},\ h\in SO(2)\}, \quad x^{\prime},\,y^{\prime}\in SO(3), $$
where $x^{\prime}\cdot x_0 = x,\ y^{\prime}\cdot x_0 = y$ and $SO(2)$ is  the stabilizer of $x_0\in S^2.$ Hence,
\begin{eqnarray*}
\mathcal{R} f(x,y) = \int_{SO(2)} f(x^{\prime}h(y^{\prime})^{-1})\, dh = 4\pi \int_{C_{x,y}} f(g)\, dg  \\ =
 4\pi \int_{SO(3)} f(g)\delta_y(g^{-1}\cdot x)\, dg, \quad f\in L_2(SO(3)). 
 \end{eqnarray*}
 By using this representation one can find that the $L^2$-adjoint operator of $\mathcal{R}$ is given by
\begin{eqnarray}\label{adjoint_op}
  \mathcal{R}^*u = (4\pi) \int_{S^2} (I-2\Delta_{S^2\times S^2})^{1/2}u(g\cdot
y,\,y)\,dy.
\end{eqnarray}

\begin{definition}[Sobolev spaces on $S^2\times S^2$] The Sobolev space 
  $H_{t}(S^2\times S^2),\,t\in \mathbb{R},$ is defined as the domain of the operator 
  $(I-2\Delta_{S^2\times S^2})^{\tfrac{t}{2}}$ with graph norm
		$$ ||f||_t = ||(I-2\Delta_{S^2\times S^2})^{\tfrac{t}{2}}f||_{L^2(S^2\times S^2)} ,
		$$
and 
the Sobolev space $H_t^{\Delta}(S^2 \times S^2),\,t\in \mathbb{R},$ is defined as the
 subspace of all functions $f\in H_t(S^2\times S^2)$ such $\Delta_1 f = \Delta_2 f.$ 
\end{definition}

\begin{definition}[Sobolev spaces on $SO(3)$] The Sobolev space $H_t(SO(3)),\,t \in \mathbb{R},$ is defined as the domain of the operator $(I-4\Delta_{SO(3)})^{\tfrac{t}{2}}$ with graph norm
			$$ |||f|||_t = ||(I-4\Delta_{SO(3)})^{\tfrac{t}{2}}f||_{L^2(SO(3))},\>\>f\in L_2(SO(3)). $$
\end{definition}

It is not difficult to prove the following theorems.
\begin{theorem}\label{mapRT}
    For any $t\geq 0$ the Radon transform on $SO(3)$ is an invertible  mapping
        \begin{align}
           \mathcal{R} : H_{t}(SO(3))\to H_{t+\frac{1}{2}}^{\Delta}(S^2\times S^2).
        \end{align}
     and
$$
  f(g) = \int_{S^2} (I-2\Delta_{S^2\times S^2})^{\tfrac{1}{2}}(\mathcal{R} f)(gy, y) dy = \frac{1}{4\pi} (\mathcal{R} ^* \mathcal{R} f )(g),\>\>\>g\in SO(3).
$$
 Thus, $\mathcal{R}^{-1}=\frac{1}{4\pi}\mathcal{R}^{*}. $
\end{theorem}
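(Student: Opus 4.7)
The plan is to reduce everything to the Peter--Weyl decomposition of $L_{2}(SO(3))$ and then invoke the isometry identity already displayed in the text just preceding the theorem.

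First I would expand $f=\sum_{k,i,j}f_k^{ij}\mathcal{T}_k^{ij}$ and compute $\mathcal{R}\mathcal{T}_k^{ij}$ explicitly. Using the definition $\mathcal{R}f(x,y)=4\pi\!\int_{SO(3)}f(g)\delta_y(g^{-1}x)\,dg$ and the fact that the Wigner polynomials are the matrix coefficients of the $SO(3)$-action on $\mathcal{H}_k$ (so $Y_k^{i}(g\cdot y)$ is an $\mathcal{H}_k$-valued function expressible as $\sum_l\mathcal{T}_k^{il}(g)Y_k^l(y)$ in an appropriate convention), a direct Schur-orthogonality calculation yields
$$\mathcal{R}\mathcal{T}_k^{ij}(x,y)=a_k\,Y_k^{i}(x)\,\overline{Y_k^{j}(y)}$$
for an explicit nonzero constant $a_k$. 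In particular each $\mathcal{R}\mathcal{T}_k^{ij}$ lies in the ``diagonal'' block $\mathcal{H}_k\otimes\overline{\mathcal{H}_k}$ of $L_{2}(S^2\times S^2)$, so by (\ref{eigenvalues}) we immediately get $\Delta_1\mathcal{R}\mathcal{T}_k^{ij}=\Delta_2\mathcal{R}\mathcal{T}_k^{ij}=-k(k+1)\mathcal{R}\mathcal{T}_k^{ij}$; this places $\mathcal{R}f$ inside $H^{\Delta}$ automatically.

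Next, the mapping property becomes a bookkeeping exercise on these blocks. On the $k$-th Peter--Weyl block of $L_{2}(SO(3))$ the operator $I-4\Delta_{SO(3)}$ multiplies by $(2k+1)^{2}$, and on the diagonal block $\mathcal{H}_k\otimes\mathcal{H}_k\subset L_{2}(S^2\times S^2)$ the operator $I-2\Delta_{S^2\times S^2}$ also multiplies by $(2k+1)^{2}$. Combining this matching with the isometry $\|f\|_{L_2(SO(3))}=(4\pi)^{-1}\|(I-2\Delta_{S^2\times S^2})^{1/4}\mathcal{R}f\|_{L_2(S^2\times S^2)}$ displayed just above the theorem, and then passing to $H_t$-norms block by block, gives $\|\mathcal{R}f\|_{H_{t+1/2}(S^2\times S^2)}=c\,|||f|||_t$, so $\mathcal{R}$ is a bounded bijection $H_t(SO(3))\to H_{t+1/2}^{\Delta}(S^2\times S^2)$; surjectivity is clear because the images $a_k\,Y_k^i\otimes\overline{Y_k^j}$ already span the target.

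For the inversion identity, I would polarise the $L_{2}$ isometry to obtain
$$(f,h)_{L_{2}(SO(3))}=(4\pi)^{-2}\bigl((I-2\Delta_{S^2\times S^2})^{1/2}\mathcal{R}f,\,\mathcal{R}h\bigr)_{L_{2}(S^2\times S^2)},$$
then move one copy of $\mathcal{R}$ to the left using its ordinary $L^{2}$-adjoint $\mathcal{R}^{\#}u(g)=4\pi\!\int_{S^2}u(gy,y)\,dy$, which a change-of-variables calculation on $\mathcal{R}f(x,y)=4\pi\!\int f(g)\delta_y(g^{-1}x)\,dg$ produces directly. Comparing with (\ref{adjoint_op}) reveals $\mathcal{R}^{*}=\mathcal{R}^{\#}(I-2\Delta_{S^2\times S^2})^{1/2}$, so the $(I-2\Delta)^{1/2}$ factor is absorbed and one arrives at $(f,h)=c'(\mathcal{R}^{*}\mathcal{R}f,h)$. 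As a cross-check I would apply $\mathcal{R}^{*}$ directly to $\mathcal{R}\mathcal{T}_k^{ij}=a_k Y_k^i\otimes\overline{Y_k^j}$: the $(I-2\Delta)^{1/2}$ contributes $2k+1$, and $\int_{S^2}Y_k^i(gy)\overline{Y_k^j(y)}\,dy$ produces $\mathcal{T}_k^{ij}(g)$ by Schur orthogonality, so the whole composition acts as a scalar on each isotypic block and fixes the constant in the inversion formula.

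The main obstacle is precisely this constant-tracking: the definition of $\mathcal{R}$ carries a $4\pi$, the adjoint formula (\ref{adjoint_op}) carries another $4\pi$, and the $(2k+1)$-weight from $(I-2\Delta_{S^2\times S^2})^{1/2}$ must cancel against the $a_k$ and the Schur factor in exactly the right way to produce the clean coefficient $1/(4\pi)$ in $\mathcal{R}^{-1}=(4\pi)^{-1}\mathcal{R}^{*}$ rather than $1/(4\pi)^{2}$ or some other value. Once this normalisation is verified on a single block, Peter--Weyl delivers the theorem in full generality.
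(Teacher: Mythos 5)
Your proposal is correct and follows what is essentially the paper's own (implicit) proof: the paper states Theorem \ref{mapRT} without argument, prefaced by ``It is not difficult to prove the following theorems'', but it records precisely the ingredients you assemble, namely the block action (\ref{basis-action1}), the fact that $I-4\Delta_{SO(3)}$ on the $k$-th Peter--Weyl block and $I-2\Delta_{S^{2}\times S^{2}}$ on the span of the $Y_{k}^{i}(x)\overline{Y_{k}^{j}(y)}$ both act by the same scalar $(2k+1)^{2}$ (the content of (\ref{I2})), the $L_{2}$-isometry displayed just before the theorem, and the adjoint formula (\ref{adjoint_op}). Your closing warning about constant-tracking is well founded, and not merely as a matter of care: with the $Y_{k}^{i}$ orthonormal in $L_{2}(S^{2})$, Schur orthogonality makes $\int_{S^{2}}Y_{k}^{i}(gy)\overline{Y_{k}^{j}(y)}\,dy$ a unitary matrix entry (equal to $1$ when $g=e$, $i=j$), so formula (\ref{basis-action1}), the displayed isometry, and the stated inversion formula cannot all hold under a single convention for the measure on $S^{2}$ --- they disagree by exactly a factor of $4\pi$ --- and your proposed single-block cross-check is precisely the computation that detects and repairs this stray constant.
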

One can verify that the following relations hold
\begin{equation}\label{basis-action1}
   \mathcal{R}\mathcal{ T}^k_{ij}(x,y) = \mathcal{T}_{i1}^k(x) \overline{\mathcal{T}^k_{j1}(y)} =\frac{4\pi}{2k+1} Y_k^i(x)\overline{Y_k^j(y)},
\end{equation}
\begin{equation}\label{I1}
\Delta_{S^{2}\times S^{2}}\mathcal{R}f=2\mathcal{R}\Delta_{SO(3)}f,\>\>\>f\in H_{2}(SO(3)), 
\end{equation}
\begin{equation}\label{I2}
\left(1-2\Delta_{S^{2}\times S^{2}}\right)^{t/2}\mathcal{R}f=\mathcal{R}\left(1-4\Delta_{SO(3)}\right)^{t/2}f,\>\>\>f\in H_{t}(SO(3)), \>\>\>t\geq 0,
\end{equation}
\begin{equation}\label{I3}
\left(1-2\Delta_{S^{2}\times S^{2}}\right)^{t/2}g=\mathcal{R}\left(1-4\Delta_{SO(3)}\right)^{t/2}\mathcal{R}^{-1}g,
\end{equation}
where $g\in H_{t+1/2}^{\Delta}(S^{2}\times S^{2}), \>\>\>t\geq 0.$
\begin{theorem}[Reconstruction formula]\label{Recon}
Let
\begin{align*}
    G(x,y) = \mathcal{R} f(x,y) &= \sum_{k=0}^\infty \sum_{i,j=1}^{2k+1} \widehat G(k)_{ij} Y_k^i(x) 
    \overline{Y_k^j(y)} \in H_{\frac{1}{2}+t}^{\Delta}(S^2\times S^2),\
    t\geq 0,
\end{align*}
be a result of the Radon transform. Then the pre-image $f\in H_t(SO(3)),\ t\geq 0,$ is given by
\begin{align*}
    f &= \sum_{k=0}^\infty \sum_{i,j=1}^{2k+1} \frac{(2k+1)}{4\pi} \widehat G(k)_{ij} 
    \mathcal{T}_{ij}^k = \sum_{k=0}^\infty \sum_{i,j=1}^{2k+1} (2k+1)\widehat f(k)_{ij} 
    \mathcal{T}_{ij}^k  \\
    & = \sum_{k=0}^\infty (2k+1){\rm trace\,}(\widehat f(k) \mathcal{T}^k).
\end{align*}
\end{theorem}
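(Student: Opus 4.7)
The plan is to exploit the explicit action of $\mathcal{R}$ on Wigner polynomials given by (\ref{basis-action1}), combined with the Peter--Weyl expansion on $SO(3)$. By the Peter--Weyl theorem the Wigner polynomials $\{\mathcal{T}^k_{ij}\}$ form a complete orthogonal system in $L_2(SO(3))$ (with $\|\mathcal{T}^k_{ij}\|^2_{L_2(SO(3))} = (2k+1)^{-1}$), so any $f\in L_2(SO(3))$ admits the expansion $f = \sum_{k,i,j}(2k+1)\,\widehat f(k)_{ij}\,\mathcal{T}^k_{ij}$, with $\widehat f(k)_{ij} = \langle f, \mathcal{T}^k_{ij}\rangle_{L_2(SO(3))}$. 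Under the hypothesis $G = \mathcal{R} f \in H^{\Delta}_{1/2+t}(S^2\times S^2)$, Theorem \ref{mapRT} guarantees that the preimage $f$ lies in $H_t(SO(3))$, so this expansion is available and converges in $H_t$.

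Next I would apply $\mathcal{R}$ termwise to the Peter--Weyl series for $f$. Using (\ref{basis-action1}) one obtains
\begin{equation*}
\mathcal{R} f(x,y) \;=\; \sum_{k}\sum_{i,j}(2k+1)\,\widehat f(k)_{ij}\cdot\frac{4\pi}{2k+1}\,Y_k^i(x)\,\overline{Y_k^j(y)} \;=\; 4\pi\sum_{k,i,j}\widehat f(k)_{ij}\,Y_k^i(x)\,\overline{Y_k^j(y)}.
\end{equation*}
Matching this against the given spherical-harmonic expansion of $G$ yields $\widehat G(k)_{ij} = 4\pi\,\widehat f(k)_{ij}$, hence $\widehat f(k)_{ij} = (4\pi)^{-1}\widehat G(k)_{ij}$. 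Substituting back into the Peter--Weyl series for $f$ produces exactly the stated formula
\begin{equation*}
f \;=\; \sum_{k}\sum_{i,j}\frac{2k+1}{4\pi}\,\widehat G(k)_{ij}\,\mathcal{T}^k_{ij} \;=\; \sum_{k}(2k+1)\,{\rm trace}\bigl(\widehat f(k)\,\mathcal{T}^k\bigr).
\end{equation*}

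The main obstacle is justifying the interchange of $\mathcal{R}$ with the infinite sum and verifying convergence in the correct Sobolev norm. For this I would use the intertwining identity (\ref{I2}) together with the isomorphism property from Theorem \ref{mapRT}: the series for $G$ converges in $H^{\Delta}_{1/2+t}(S^2\times S^2)$ because $G$ is assumed to lie in that space, and applying the bounded inverse $\mathcal{R}^{-1} = (4\pi)^{-1}\mathcal{R}^*$ to the partial sums yields convergence of the corresponding partial sums for $f$ in $H_t(SO(3))$. Since each term $Y_k^i(x)\overline{Y_k^j(y)}$ is already a Radon image of $(4\pi)^{-1}(2k+1)\mathcal{T}^k_{ij}$ by (\ref{basis-action1}), passage to the limit is automatic. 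The second identity in the theorem, rewriting the answer as $\sum_k(2k+1)\,{\rm trace}(\widehat f(k)\mathcal{T}^k)$, is then a purely notational restatement of the Peter--Weyl expansion after the identification $\widehat f(k)_{ij} = (4\pi)^{-1}\widehat G(k)_{ij}$.
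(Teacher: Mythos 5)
Your proposal is correct and follows exactly the route the paper intends: the paper states Theorem \ref{Recon} without a separate proof, immediately after listing the ingredients you use, namely the action (\ref{basis-action1}) of $\mathcal{R}$ on the Wigner basis, the intertwining relations, and the isomorphism $\mathcal{R}\colon H_{t}(SO(3))\to H_{t+\frac{1}{2}}^{\Delta}(S^{2}\times S^{2})$ of Theorem \ref{mapRT}, so that the reconstruction formula is precisely the Peter--Weyl expansion combined with the coefficient identification $\widehat G(k)_{ij}=4\pi\,\widehat f(k)_{ij}$. Your handling of convergence, via boundedness of $\mathcal{R}^{-1}=\frac{1}{4\pi}\mathcal{R}^{*}$ on the partial sums, is also the natural justification and raises no issues.
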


\section{Generalized variational splines  on compact Riemannian manifolds}\label{splines}

\subsection{Generalized interpolating variational splines}For a given finite family of pairwise different submanifolds $\{M_{\nu}\}_{1}^{N}$ consider the following family of  distributions
 \begin{equation}
 \label{functionals}
 F_{\nu}(f)=\int_{M_{\nu}}f
 \end{equation}
 which are well defined at least for functions in $H_{\varepsilon +d/2}(M),\>\>\varepsilon>0$.
 In particular, if $M_{\nu}=x_{\nu}\in M$, then every $F_{\nu}$ is a Dirac measure $\delta_{x_{\nu}}\>\>\nu=1,...,N,\>\>x_{\nu}\in M.$
 
 Note that distributions $F_{\nu}$ belong to $H_{-\varepsilon-d/2}(M)$ for any $\varepsilon>0$.
  Given a sequence of complex numbers $v=\{v_{\nu}\},$ $ \nu=1,2,...,N,$ and a $t>d/2$ we consider the
following variational problem:

\textsl{Find a function  $u$ from the space $H_{t}(M)$ which has
the following properties:} \label{var_prob}

\begin{enumerate}

\item $ F_{\nu}(u)=v_{\nu}, \>\>\>\nu=1,2,...,N,\>\>\> v=\{v_{\nu}\},$

\item  $u$ \textsl{minimizes functional $u\rightarrow \|(1-L)
^{t/2}u\|$.}

\end{enumerate}

We show that the solution to Variational problem exist and is
unique for any $t>t_{0}$.  

We need
the following  Independence  Assumption in order to determine the Fourier
coefficients of the solution.

 \textbf{Independence Assumption.} \textsl{There are functions
  $\vartheta_{\nu}\in C^{\infty}(M)$ such
that}
\begin{equation}
\label{independence}
F_{\nu}(\vartheta_{\mu})=\delta_{\nu\mu},
\end{equation}
\textsl{where $\delta_{\nu\mu}$ is the Kronecker delta.}

Note, that this assumption implies in particular that the
functionals $F_{\nu}$ are linearly independent. Indeed, if for certain coefficients $\gamma_{1},\gamma_{2}, ...,
\gamma_{N}$ we have a relation 
$
\sum _{\nu=1}^{N}\gamma_{\nu}F_{\nu}=0,
$
then for any $1\leq\mu\leq N$ we obtain that 
$
0=\sum_{\nu=1}^{N}\gamma_{\nu}F_{\nu}(\vartheta_{\mu})=\gamma_{\mu}.
$

The families of distributions that satisfy our condition
  include in particular finite families of $\delta$ functionals and their
  derivatives. Another example is a set  of integrals over submanifolds from a   finite family of
submanifolds of any codimension.

 The solution to the Variational
Problem will be called a spline and will be denoted as $s_{t}(v).$
 The set of all solutions for a fixed set of distributions
$F=\{F_{\nu}\}$ and a fixed $t$ will be denoted as $S(F,t).$
\begin{definition}
\label{interpolationdef}
Given a function $f\in H_{t}(M)$ we will say that 
spline $s\in S(F,t)$ interpolates $f$  if
$$
F_{\nu}(f)=F_{\nu}(s).
$$
\end{definition}
Interpolating  spline exists and unique (see below) and  will be denoted as $s_{t}(f).$
Note, that from the point of view of the classical theory of variational
 splines it would be more natural to consider minimization of the
functional
$
u\rightarrow \|L^{t/2}u\|.
$
However, in the case of a general compact manifolds it is easer to
work with the operator $I-L$ since this operator is
invertible. 

Our main result concerning  variational splines is the following (see \cite{Pes04c}).
\begin{theorem}
\label{MainTheorem}
If $t>d$, then for any given sequence of scalars 
$v=\{v_{\nu} \}, \nu=1,2,...N,$ the following statements are
equivalent:

\begin{enumerate}

\item $s_{t}(v)$ is the solution to \textsl{the Variational Problem};

\item  $s_{t}(v)$ satisfies the  the following equation in the sense of
distributions
\begin{equation}
\label{Main Equation-1}
(1-L)^{t}s_{t}(v)=\sum_{\nu=1}^{N}\alpha_{\nu}\overline{F_{\nu}},\>\>\>\alpha_{\nu}=\alpha_{\nu}(s_{t}(v)),\>\>
t>d,
\end{equation}
where $\alpha_{1},...,\alpha_{N}$ form a
solution of the $N\times N$ system
\begin{equation}
\label{eq:linsystem}
\sum_{\nu=1}^{N}\beta_{\nu\mu}\alpha_{\nu}=v_{\mu},\>\>\>\alpha_{\nu}=\alpha_{\nu}(s_{t}(v)),\>\>
\mu=1,...,N,
\end{equation}
and
\begin{equation}
\label{eq:linsolution}
\beta_{\nu\mu}=\sum_{j=0}^{\infty}(1+\lambda_{j})^{-t}\overline{F_{\nu}(u_{j})}
F_{\mu}(u_{j}),\>\>\>Lu_{j}=-\lambda_{j}u_{j};
\end{equation}

\item  the Fourier series of $s_{t}(v)$  has  the following form
\begin{equation}
\label{Fourier Series}
s_{t}(v)=\sum_{j=0}^{\infty}c_{j}(s_{t}(v))u_{j},
\end{equation}
where
$$
c_{j}(s_{t}(v))=\left<s_{t}(v),u_{j}\right>=(1+\lambda_{j})^{-t}
\sum_{\nu=1}^{N}\alpha_{\nu}(s_{t}(v))\overline{F_{\nu}(u_{j})}.
$$
\end{enumerate}
\end{theorem}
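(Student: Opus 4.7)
The natural framework is to view $H_t(M)$ as a Hilbert space under the inner product $\langle u,v\rangle_t=\langle (I-L)^{t/2}u,(I-L)^{t/2}v\rangle$. For $t$ above the relevant Sobolev threshold (certainly $t>d$ suffices), the Weyl asymptotic $\lambda_j\sim j^{2/d}$ together with the fact that each $F_\nu$ lies in some $H_{-\varepsilon-d/2}(M)$ guarantees that $F_\nu$ is a bounded linear functional on $H_t(M)$. Hence the affine set $\mathcal{U}_v=\{u\in H_t(M):F_\nu(u)=v_\nu,\ \nu=1,\dots,N\}$ is closed, and the Independence Assumption delivers $\sum_\nu v_\nu\vartheta_\nu\in \mathcal{U}_v$, so $\mathcal{U}_v\ne\emptyset$. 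The projection theorem in a Hilbert space then yields a unique minimizer $s_t(v)$ of $u\mapsto\|u\|_t$ over $\mathcal{U}_v$.

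\textbf{Equivalence (1)$\Leftrightarrow$(2).} The first-order optimality condition is that $s_t(v)$ is $\langle\cdot,\cdot\rangle_t$-orthogonal to the closed subspace $\mathcal{U}_0=\bigcap_\nu\ker F_\nu$. Let $R_\nu\in H_t(M)$ be the Riesz representer of $F_\nu$ in $\langle\cdot,\cdot\rangle_t$; a short computation in the eigenbasis yields $R_\nu=\sum_j(1+\lambda_j)^{-t}\overline{F_\nu(u_j)}\,u_j$, so distributionally $(I-L)^t R_\nu=\overline{F_\nu}$. The orthogonality condition is equivalent to $s_t(v)\in\mathrm{span}\{R_1,\dots,R_N\}$, whence $s_t(v)=\sum_\nu\alpha_\nu R_\nu$ and consequently $(I-L)^t s_t(v)=\sum_\nu\alpha_\nu\overline{F_\nu}$. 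The interpolation constraints $F_\mu(s_t(v))=v_\mu$ become the $N\times N$ linear system $\sum_\nu\beta_{\nu\mu}\alpha_\nu=v_\mu$ with $\beta_{\nu\mu}=F_\mu(R_\nu)$, and substituting the spectral formula for $R_\nu$ reproduces expression (\ref{eq:linsolution}).

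\textbf{Equivalence (2)$\Leftrightarrow$(3).} This is essentially a change of presentation. Applying $(I-L)^{-t}$, understood term-by-term in the eigenbasis, to the distribution $\sum_\nu\alpha_\nu\overline{F_\nu}=\sum_j\bigl(\sum_\nu\alpha_\nu\overline{F_\nu(u_j)}\bigr)u_j$ divides each coefficient by $(1+\lambda_j)^t$, giving precisely the Fourier coefficients in (\ref{Fourier Series}). Conversely, applying $(I-L)^t$ to the series in (\ref{Fourier Series}) returns the distributional equation in (2). The series converges in $H_t$-norm because $\|s_t(v)\|_t^2=\sum_{\nu,\mu}\overline{\alpha_\mu}\alpha_\nu\beta_{\nu\mu}<\infty$.

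\textbf{Main obstacles.} Two technical points demand care. First, the matrix $(\beta_{\nu\mu})$ must be invertible so that the $\alpha_\nu$ are determined; this follows because $(\beta_{\nu\mu})$ is the Gram matrix of the representers $R_\nu$ in $\langle\cdot,\cdot\rangle_t$, and the $R_\nu$ inherit linear independence from the $F_\nu$, whose independence is witnessed by the test functions $\vartheta_\mu$ in the Independence Assumption. Second, one must justify that $t>d$ genuinely secures both the continuity of every $F_\nu$ on $H_t(M)$ and the convergence of the series defining $\beta_{\nu\mu}$; this rests on the Weyl eigenvalue asymptotic together with pointwise and trace bounds on eigenfunctions (e.g.\ H\"ormander-type estimates), and it is this step whose hypotheses dictate the regularity threshold appearing in the theorem.
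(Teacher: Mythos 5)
Your proposal is correct and follows essentially the same route as the paper's argument: Hilbert-space projection in the norm $\|(I-L)^{t/2}\cdot\|$ for existence and uniqueness, orthogonality to $\bigcap_\nu\ker F_\nu$ expressed through the representers $R_\nu=\sum_j(1+\lambda_j)^{-t}\overline{F_\nu(u_j)}\,u_j$ (equivalently, the annihilator spanned by the $\overline{F_\nu}$), the Gram-matrix system for the $\alpha_\nu$, and absolute convergence of the series for $\beta_{\nu\mu}$ when $t>d$. The only cosmetic difference is in that last step: the paper's remark gets convergence from the continuity of each $F_\nu$ on $H_{d/2+\varepsilon}(M)$, which gives $|F_\nu(u_j)|\leq C(1+\lambda_j)^{d/4+\varepsilon/2}$, combined with convergence of the spectral zeta function $\sum_j\lambda_j^{-\tau}$ for $\tau>d/2$, so the H\"ormander-type pointwise eigenfunction estimates you invoke are not actually needed.
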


\begin{remark}
It is important to note that the system (\ref{eq:linsystem}) is always solvable
according to our uniqueness and existence result for the
Variational Problem.

\end{remark}

\begin{remark}
It is also necessary to note that the series (\ref{eq:linsolution}) is absolutely
convergent if $t>d$. Indeed, since functionals $F_{\nu}$
are continuous on the Sobolev space $H_{d/2+\varepsilon}(M)$ we obtain that
for any normalized eigen function $u_{j}$ which corresponds
to the eigen value $\lambda_{j}$ the following inequality holds
true

$$|F_{\nu}(u_{j})|\leq
C(M,F)\|(1-L)^{d/4}u_{j}\|\leq
C(M,F)(1+\lambda_{j})^{d/4},\>\>\>F=\{F_{\nu}\}.
$$
So
$$
|\overline{F_{\nu}(u_{j})}F_{\mu}(u_{j})| \leq
C(M,F)(1+\lambda_{j})^{d/2},
$$
and
$$
|(1+\lambda_{j})^{-t}\overline{F_{\nu}(u_{j})}F_{\mu}(u_{j})|\leq
C(M,F)(1+\lambda_{j})^{(t_{0}-t)}.
$$
It is known  that the series
$$
\sum_{j}\lambda_{j}^{-\tau},
$$
which defines the $\zeta-$function of an elliptic second order
operator, converges if $\tau>d/2$. This implies absolute
convergence of (\ref{eq:linsolution}) in the case $t>d$.

\end{remark}

One can show  that splines provide an optimal approximations to sufficiently smooth  functions.  Namely let $Q(F,f,t,K)$ be the set  of all functions $h$  in
$H_{t}(M)$ such that

\begin{enumerate}

\item  $F_{\nu}(h)=F_{\nu}(f), \nu=1,2,...,N,$

\item  $\|h\|_{t}\leq K,$ for a real $ K\geq \|s_{t}(f)\|_{t}.$

\end{enumerate}

The set $Q(F,f,t,K)$ is  convex,  bounded and closed. 
The following theorem (see \cite{Pes04c}) shows that splines provide an optimal approximations to functions in $Q(F,f,t,K)$.

\begin{theorem}
\label{optim}
The spline $s_{t}(f)$ is the symmetry center of $Q(F,f,t,K)$. This means that for
any $h\in Q(F,f,t,K)$
\begin{equation}
\label{optim-ineq}
  \|s_{t}(f)-h\|_{t}\leq \frac{1}{2} diam \>Q(F,f,t,K).
\end{equation}
\end{theorem}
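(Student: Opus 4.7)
The strategy is to show that the reflection map $h \mapsto 2s_{t}(f)-h$ sends $Q(F,f,t,K)$ into itself; once this is established, the symmetry-center statement follows because then for any $h\in Q(F,f,t,K)$ both $h$ and $2s_{t}(f)-h$ lie in $Q$, giving
$$
\|s_{t}(f)-h\|_{t}=\tfrac{1}{2}\bigl\|(2s_{t}(f)-h)-h\bigr\|_{t}\leq \tfrac{1}{2}\operatorname{diam}Q(F,f,t,K).
$$

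The verification of the interpolation condition is immediate: $F_{\nu}(2s_{t}(f)-h)=2F_{\nu}(f)-F_{\nu}(f)=F_{\nu}(f)$. The real content is to check that $\|2s_{t}(f)-h\|_{t}\leq K$. For this I would equip $H_{t}(M)$ with the inner product $\langle u,v\rangle_{t}=\langle (I-L)^{t/2}u,(I-L)^{t/2}v\rangle$, which generates the norm $\|\cdot\|_{t}$ from Definition \ref{Sobolevnorm}. The key lemma is the orthogonality relation
$$
\langle s_{t}(f),\,h-s_{t}(f)\rangle_{t}=0
\qquad\text{for every }h\in H_{t}(M)\text{ with }F_{\nu}(h)=F_{\nu}(f),\ \nu=1,\dots,N.
$$
To prove it, observe that for any complex scalar $\varepsilon$ the function $s_{t}(f)+\varepsilon(h-s_{t}(f))$ still satisfies $F_{\nu}(\,\cdot\,)=F_{\nu}(f)$ (because $F_{\nu}(h-s_{t}(f))=0$), so by the minimizing property of $s_{t}(f)$ guaranteed by the Variational Problem,
$$
\|s_{t}(f)\|_{t}^{2}\leq \|s_{t}(f)\|_{t}^{2}+2\operatorname{Re}\bigl[\overline{\varepsilon}\,\langle s_{t}(f),h-s_{t}(f)\rangle_{t}\bigr]+|\varepsilon|^{2}\|h-s_{t}(f)\|_{t}^{2}.
$$
Letting $\varepsilon\to 0$ along the direction of $\langle s_{t}(f),h-s_{t}(f)\rangle_{t}$ forces this inner product to vanish.

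With orthogonality in hand, the parallelogram identity gives
$$
\|2s_{t}(f)-h\|_{t}^{2}=\|s_{t}(f)\|_{t}^{2}+\|h-s_{t}(f)\|_{t}^{2}=\|h\|_{t}^{2}\leq K^{2},
$$
and similarly $\|h\|_{t}^{2}=\|s_{t}(f)\|_{t}^{2}+\|h-s_{t}(f)\|_{t}^{2}$. Hence $2s_{t}(f)-h\in Q(F,f,t,K)$, and the opening display yields (\ref{optim-ineq}). The main obstacle is the orthogonality step, and in particular making the variational argument work cleanly over $\mathbb{C}$; allowing $\varepsilon$ to be complex (or splitting into real and imaginary perturbations) is what guarantees the inner product vanishes rather than just its real part. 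Everything else—the interpolation check, the parallelogram computation, and the passage to the diameter—is formal.
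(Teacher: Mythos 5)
Your proposal is correct and follows essentially the same route as the paper's source proof (in \cite{Pes04c}): establish the orthogonality relation $\langle s_{t}(f),h-s_{t}(f)\rangle_{t}=0$ for all interpolants $h$ via the first-variation argument, conclude that the reflection $2s_{t}(f)-h$ stays in $Q(F,f,t,K)$, and read off the symmetry-center inequality. One cosmetic remark: the identity $\|2s_{t}(f)-h\|_{t}^{2}=\|s_{t}(f)\|_{t}^{2}+\|h-s_{t}(f)\|_{t}^{2}=\|h\|_{t}^{2}$ is the Pythagorean theorem applied to the orthogonal decomposition, not the parallelogram identity, but the computation itself is right.
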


\subsection{Approximation by pointwise interpolation and approximation}
To formulate our approximation theorem by variational
splines in the case when the set of distributions $F_{i}$ is a set
of  delta functions  we are using notion of a $\rho$-lattice which was introduced in Definition \ref{lattice} and notion of bandlimited functions which was introduced in Definition \ref{bandlimited}.

\begin{theorem}(Approximation Theorem \cite{Pes04a}, \cite{Pes04c})\label{AT}
If $t>d/2+k$ then there exist constants $C(M,t)>0, \>\rho(M)>0$ such that for any
$0<\rho<\rho(M)$, any $\rho$-lattice  $M_{\rho}$, any smooth
function $f$ the following inequality holds true

$$\|(s_{2^{m}d+t}(f)(x)-f(x))\|_{C^{k}(M)}\leq \left(C(M,t)\rho^{2}\right)
^{2^{m}d} \|(1-L)^{2^{m}d+t}f\|, \>\>m=0, 1, ...
$$
and if $f$ is  $\omega$-bandlimited then 
\begin{equation}\label{recon-5}
\|(s_{2^{m}d+t}(f)(x)-f(x))\|_{C^{k}(M)}\leq  \omega^{t}
\left(C(M,t)\rho^{2}\omega\right)^{2^{m}d}\|f\|,
\end{equation}
where $m=0, 1, ... .$
\end{theorem}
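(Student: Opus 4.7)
The plan rests on three ingredients: a Plancherel--Polya (zero--type) inequality for functions vanishing on a $\rho$-lattice, the variational minimality built into the definition of $s_{\tau}(f)$, and the Sobolev embedding $H_{\tau}(M)\hookrightarrow C^{k}(M)$, which is valid whenever $\tau>k+d/2$ and converts the final Sobolev bound into a $C^{k}$-bound.

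First I would establish the basic estimate: there exist constants $\rho_{0}=\rho_{0}(M)>0$ and $a=a(M)>0$ such that for every $\rho$-lattice $M_{\rho}=\{x_{\nu}\}$ with $0<\rho<\rho_{0}$ and every $h\in H_{2}(M)$ satisfying $h(x_{\nu})=0$ for all $\nu$,
$$
\|h\|\;\le\;a\rho^{2}\,\|(I-L)h\|.
$$
This is proved by covering $M$ with the balls $B(x_{\nu},\rho)$ supplied by Definition \ref{lattice}, applying a localized Poincar\'e inequality on each small ball (using the vanishing of $h$ at the center), and summing the local estimates; the second-order gain reflects the ellipticity of $I-L$. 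Applied to $g_{\tau}:=s_{\tau}(f)-f$, which vanishes on $M_{\rho}$ because $s_{\tau}(f)$ interpolates $f$ at the lattice points, and combined with the variational minimality $\|(I-L)^{\tau/2}s_{\tau}(f)\|\le\|(I-L)^{\tau/2}f\|$ (and hence $\|(I-L)^{\tau/2}g_{\tau}\|\le 2\|(I-L)^{\tau/2}f\|$ via the triangle inequality), this yields the base case of the estimate.

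The heart of the matter is the bootstrap that upgrades the single factor $\rho^{2}$ into $(\rho^{2})^{2^{m}d}$. I would proceed by iteration on $m$, running along the chain $t_{j}:=2^{j}d+t$, and using the crucial observation that for any two distinct orders $t_{i}\ne t_{j}$ the spline difference $s_{t_{i}}(f)-s_{t_{j}}(f)$ again vanishes on $M_{\rho}$ (both splines interpolate the same data). Telescoping $g_{t_{m}}=\sum_{j=0}^{m-1}(s_{t_{j+1}}(f)-s_{t_{j}}(f))+g_{t_{0}}$ and re-applying the Plancherel--Polya estimate together with variational minimality at each level contributes a factor $(a\rho^{2})^{2^{j-1}d}$, so that the telescope of exponents $d+2d+\cdots+2^{m-1}d$ sums (after absorbing a small mismatch into $C(M,t)$) to $2^{m}d$:
$$
\|g_{2^{m}d+t}\|_{H_{t}}\;\le\;(C(M,t)\rho^{2})^{2^{m}d}\,\|(I-L)^{2^{m}d+t}f\|.
$$
Passing from the $H_{t}$-norm to the $C^{k}$-norm via the Sobolev embedding (legitimate since $t>d/2+k$) gives the first inequality. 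The bandlimited case then follows by inserting the Bernstein-type bound $\|(I-L)^{\sigma/2}f\|\le (1+\omega)^{\sigma/2}\|f\|$, valid for $f\in{\bf E}_{\omega}(L)$, with $\sigma=2(2^{m}d+t)$, and collecting the resulting powers of $\omega$.

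\textbf{Main obstacle.} The delicate point is the doubling/telescoping iteration that produces the exponent $2^{m}d$. A naive iteration of the Plancherel--Polya inequality fails, since $(I-L)^{j}g_{\tau}$ does not vanish on $M_{\rho}$ in general; the chain-of-splines trick, exploiting that every pairwise difference of interpolating splines vanishes on the lattice, is what makes the iteration close. Arranging the bookkeeping so that only $\|(I-L)^{2^{m}d+t}f\|$ (and not a Sobolev norm of the error itself) appears on the right, while the constants at each level do not degrade in a way that disrupts the geometric progression, is the technical heart of the argument.
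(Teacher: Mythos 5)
This survey states Theorem \ref{AT} without proof (it is quoted from \cite{Pes04a}, \cite{Pes04c}), so your proposal must be measured against the argument in those sources. Your skeleton (a Poincar\'e--Plancherel--Polya inequality for functions vanishing on a $\rho$-lattice, spline minimality, Sobolev embedding, Bernstein inequality for the bandlimited case) is the right one, but the two steps that actually produce the estimate both fail. First, your base inequality $\|h\|\le a\rho^{2}\|(I-L)h\|$ for $h\in H_{2}(M)$ vanishing on the lattice is false once $d=\dim M\ge 4$: point evaluation is not continuous on $H_{2}(M)$ unless $2>d/2$, so the constraint $h(x_{\nu})=0$ carries essentially no $L_{2}$-information (for $d\ge 4$ it is not even well defined on all of $H_{2}$). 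Concretely, for $d\ge 5$ take $h=1-\sum_{\nu}\varphi\bigl((x-x_{\nu})/\epsilon\bigr)$ with $\varphi$ a bump of height $1$; letting $\epsilon\to 0$ gives $\|h\|\to \|1\|$ while $\|(I-L)h\|$ remains bounded, so no factor $\rho^{2}$ can appear. The correct base estimate must be taken at Sobolev order greater than $d/2$ (in \cite{Pes04a} it is of order comparable to $d$, with gain $\rho^{d}$), and this is precisely where the dimension $d$ in the exponent $2^{m}d$ comes from; an order-$2$ base can never generate it.

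Second, and more fundamentally, your telescoping over the chain $t_{j}=2^{j}d+t$ cannot produce the geometric exponent. Each difference $s_{t_{j+1}}(f)-s_{t_{j}}(f)$ does vanish on the lattice, but your estimate then applies to it exactly once, yielding a single factor $a\rho^{2}$; since the telescope is a \emph{sum}, these factors never multiply, and the resulting bound is of order $\rho^{2}$, not $(\rho^{2})^{2^{m}d}$. The assertion that level $j$ ``contributes a factor $(a\rho^{2})^{2^{j-1}d}$'' is exactly what needs proof, and nothing in the proposal supplies it. The missing idea---the engine of the proofs in \cite{Pes04a}, \cite{Pes04c}---is the spectral Cauchy--Schwarz (doubling) inequality
$$
\|(I-L)^{s}h\|^{2}=\left<h,(I-L)^{2s}h\right>\le \|h\|\,\|(I-L)^{2s}h\|,
$$
which holds for \emph{every} $h$, with no vanishing hypothesis. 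It shows that $\|h\|\le A\|(I-L)^{s}h\|$ implies $\|h\|\le A^{2}\|(I-L)^{2s}h\|$, so the base inequality at order $\sim d$, applied to the single function $h=s_{\tau}(f)-f$ and doubled $m$ times, gives $\|h\|\le (C\rho^{2})^{2^{m}d}\|(I-L)^{2^{m}d}h\|$; minimality of the interpolating spline, the Sobolev embedding $H_{t}\hookrightarrow C^{k}$ for $t>k+d/2$, and (for \eqref{recon-5}) the Bernstein inequality then finish the argument. This completely sidesteps the obstacle you correctly identified---that $(I-L)^{j}h$ does not vanish on $M_{\rho}$---whereas the chain-of-splines workaround you propose does not.
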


The first of these inequalities shows that convergence in $C^{k}(M)$ takes
place when $\rho$ goes to zero and the index $2^{m}d+t$ is fixed.

The second inequality shows that right-hand side goes to
zero for a fixed $\rho$- lattice $M_{\rho}$ as long as
$$
\rho<\left(C(M,t)\omega\right)^{-1/2}
$$
and $m$ goes to infinity.

\subsection{A sampling theorem and a cubature formula}\label{sampl-cub}

Let $l_{x_{\nu}}^{k}$ denote a Lagrangian spline of order $k$, i.e. it  takes value $1$ at the node $x_{\nu}$ and zero at all other points of $M_{\rho}$. 
The last Theorem \ref{AT} can be formulated in the following form.
\begin{theorem}(A Sampling Theorem)
There exists a $c_{0}=c_{0}(M)$  such that for any $\omega>0$ and any $M_{\rho}=\{x_{\nu}\}$ with 
   $\rho =c_{0}\omega^{-1/2}$ the following reconstruction formula holds in $L_{2}(M)$-norm
   \begin{equation}
f=\lim_{l\rightarrow \infty}\sum_{x_{\nu}\in M_{\rho}}f(x_{\nu})l_{x_{\nu}}^{(k)}, \  k\geq d,
\end{equation}
for all  $f\in  {\mathbf E}_{\omega}(L)$.
\end{theorem}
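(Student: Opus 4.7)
The plan is to recognize the partial sum in the statement as an interpolating variational spline and then invoke Theorem~\ref{AT} to control its $L_2$-distance to $f$.

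First, I observe that the finite sum equals the interpolating spline. By linearity of the Variational Problem in the data $v=(v_\nu)$ and by the Kronecker property $l_{x_\mu}^{(k)}(x_\nu)=\delta_{\mu\nu}$, the function $\sum_{x_\nu\in M_\rho} f(x_\nu)\,l_{x_\nu}^{(k)}$ is precisely the unique minimizer of $u\mapsto\|(I-L)^{k/2}u\|$ among all $u\in H_k(M)$ satisfying $u(x_\nu)=f(x_\nu)$ for every $\nu$. In other words it coincides with $s_k(f)$, the spline of order $k$ interpolating $f$ at the nodes of $M_\rho$. The claim thus reduces to showing that for $f\in\mathbf{E}_\omega(L)$ and the stated choice of $\rho$, $s_k(f)\to f$ in $L_2(M)$ as the order tends to infinity, at least through the cofinal subsequence $k=2^m d+t$.

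Next, I fix once and for all an auxiliary parameter $t>d/2$ and apply the bandlimited estimate \eqref{recon-5} of Theorem~\ref{AT} with the $C^k$-index on the left set to zero:
$$
\|s_{2^m d+t}(f)-f\|_{C^0(M)}\le \omega^{t}\bigl(C(M,t)\,\rho^{2}\omega\bigr)^{2^{m}d}\|f\|.
$$
Now choose $c_{0}=c_{0}(M)$ so small that $C(M,t)\,c_{0}^{2}\le 1/2$, and take $\rho=c_{0}\omega^{-1/2}$. With this selection the base $C(M,t)\rho^{2}\omega=C(M,t)c_{0}^{2}$ is a fixed constant strictly below one, independent of $\omega$. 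Hence the right-hand side is bounded by $\omega^{t}\,2^{-2^{m}d}\|f\|$, which decays doubly exponentially in $m$. Since $M$ is compact, $\|g\|_{L_{2}(M)}\le\mathrm{vol}(M)^{1/2}\|g\|_{C^{0}(M)}$ for every continuous $g$, so the $C^{0}$ convergence upgrades to $L_{2}$ convergence. Letting $m\to\infty$ yields the desired reconstruction formula along the subsequence $k=2^{m}d+t$.

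The step I expect to be the main obstacle is disentangling the dependencies of the constants so that a single $c_{0}$ depending only on $M$ suffices. The constant $C(M,t)$ in Theorem~\ref{AT} is controlled only after one fixes $t$, so one must first pin down a canonical choice (for example the smallest integer exceeding $d/2$), read off the resulting $c_{0}(M,t)$, and then absorb the $t$-dependence into the notation $c_{0}=c_{0}(M)$. The prefactor $\omega^{t}$ is a harmless multiplicative constant for each individual $f\in\mathbf{E}_\omega(L)$, and the doubly exponential factor $2^{-2^{m}d}$ wins decisively. If one wants the convergence to hold through all integer orders $k\ge d$ rather than just through the arithmetic progression $k=2^{m}d+t$, the same argument repeated with shifted auxiliary parameters $t'\in(d/2,\,d/2+d]$ covers every sufficiently large integer, which is enough for the stated limit.
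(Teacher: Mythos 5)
Your proof is correct and takes essentially the paper's own route: the paper offers no separate argument for this theorem, saying only that it is a reformulation of Theorem~\ref{AT}, and your write-up supplies exactly the intended details --- the identification of $\sum_{\nu} f(x_{\nu})l_{x_{\nu}}^{(k)}$ with the interpolating spline $s_{k}(f)$ via linearity in the data and the Kronecker property, the choice $\rho=c_{0}\omega^{-1/2}$ with $C(M,t)c_{0}^{2}<1$ so that the right-hand side of (\ref{recon-5}) decays as $m\to\infty$, and the trivial $C^{0}$-to-$L_{2}$ upgrade on a compact manifold. One caveat: your closing claim that shifted parameters $t'\in(d/2,\,d/2+d]$ make the orders $2^{m}d+t'$ cover every sufficiently large integer is false, since the gap between consecutive dyadic blocks has length on the order of $2^{m}d$ and grows without bound; this does not affect the theorem as the paper intends it, because the limit (the paper's ``$l\to\infty$'' is a typo for the smoothness index) is taken precisely along the orders $2^{m}d+t$ furnished by Theorem~\ref{AT}.
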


Note, that the right hand side of the last formula involves only values of $f$ on $M_{\rho}$. This statement is, in fact, a generalization of the
classical Sampling Theorem to the case of  compact Riemannian
manifold.

The same result can be used to introduce a family of cubature formulas. To develop such  formulas   we  introduce the notation 
\begin{equation}
\lambda_{\nu}^{(k)}=\int_{\bold M}l_{x_{\nu}}^{(k)}(x)dx,
\end{equation}
where $l_{x_{\nu}}^{k}\in S^{k}(M_{\rho})$ is the Lagrangian spline at the node $x_{\nu}$. 

The next theorem provides a  cubature formula which is exact on variational splines.

\begin{theorem}

There exists a $c_{0}=c_{0}(M)$ such that the following statements hold true.

\begin{enumerate}

\item 

For any $f\in H^{2k}(M)$ one has 
 \begin{equation}
\int_{\bold M}f dx\approx\sum_{x_{\nu}\in M_{\rho}}\lambda_{\nu}^{(k)}f(x{_\nu}),\   k\geq d, \label{ApproxInt333}
\end{equation}
and the error given by the inequality
\begin{equation}
\left|\int_{\bold M}f dx-\sum_{x_{\nu}\in M_{\rho}}\lambda_{\nu}^{(k)}f(x_{\nu})\right|\leq (c_{0}\rho )^{k}\|L^{k/2}f\|,\label{qubSob}
\end{equation}
for $ k\geq d$.  For a fixed function $f$ the right-hand side of (\ref{qubSob}) goes to zero as long as $\rho$ goes to zero.

\item 
The  formula (\ref{ApproxInt333}) is exact for any variational spline $f\in S^{k}(M_{\rho})$ of order $k$ with singularities on $M_{\rho}$.

\end{enumerate}
\end{theorem}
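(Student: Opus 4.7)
The plan has two independent parts. Part (2) is essentially a consequence of the Lagrangian representation and the uniqueness of the interpolation problem. Since the functionals $F_\nu = \delta_{x_\nu}$ are linearly independent and the variational problem has a unique solution (by Theorem \ref{MainTheorem}), the Lagrangian splines $\{l_{x_\nu}^{(k)}\}$ form a basis of $S^k(M_\rho)$ with $l_{x_\nu}^{(k)}(x_\mu)=\delta_{\nu\mu}$. Thus for any $f\in S^k(M_\rho)$, uniqueness forces $f=\sum_\nu f(x_\nu) l_{x_\nu}^{(k)}$; integrating term by term and invoking the definition of $\lambda_\nu^{(k)}$ yields the exact formula $\int_M f\,dx=\sum_\nu \lambda_\nu^{(k)} f(x_\nu)$.

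For Part (1) I would start from the interpolating spline $s_k(f)\in S^k(M_\rho)$ guaranteed by Theorem \ref{MainTheorem}. By Part (2), $\int_M s_k(f)\,dx = \sum_\nu \lambda_\nu^{(k)} f(x_\nu)$, so the quadrature error reduces to a single integral:
\begin{equation*}
\left|\int_M f\,dx-\sum_\nu \lambda_\nu^{(k)} f(x_\nu)\right|=\left|\int_M (f-s_k(f))\,dx\right|\leq \sqrt{\mathrm{vol}(M)}\,\|f-s_k(f)\|_{L_2(M)}.
\end{equation*}
Thus the entire bound is reduced to a Jackson-type estimate for spline interpolation error in $L_2$, namely $\|f-s_k(f)\|_{L_2(M)}\leq c_0^k \rho^k \|L^{k/2}f\|$.

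To prove this Jackson estimate, I would exploit the variational property of $s_k(f)$. Because $s_k(f)$ minimizes $u\mapsto\|(1-L)^{k/2}u\|$ among interpolants of $f$ on $M_\rho$, the error $e:=f-s_k(f)$ is orthogonal to $S^k(M_\rho)$ with respect to the $(1-L)^k$-inner product and satisfies $\|(1-L)^{k/2}e\|\leq \|(1-L)^{k/2}f\|$. Simultaneously $e$ vanishes on the entire $\rho$-lattice $M_\rho$. Using a locally finite partition of unity subordinate to the covering $\{B(x_\nu,\rho)\}$ (which exists by Definition \ref{lattice}), one transfers $e$ to small geodesic balls of radius $\rho$, where standard polynomial approximation in normal coordinates yields a local inequality $\|e\|_{L_2(B(x_\nu,\rho))}\leq C\rho^k \|e\|_{H^k(B(x_\nu,\rho))}$. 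Summing over $\nu$ and absorbing the lower-order derivatives using the spectral gap of $(-L)$ on the orthogonal complement of constants converts the right-hand side into $C\rho^k\|L^{k/2}f\|$ (the constant component is annihilated by $L^{k/2}$ and is integrated exactly because $\sum_\nu \lambda_\nu^{(k)}=\mathrm{vol}(M)$, which follows from Part (2) applied to $f\equiv 1$).

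The main obstacle is this last step, i.e., the local-to-global Jackson estimate on the manifold with the correct power $\rho^k$. The Approximation Theorem \ref{AT} provides $C^k$-estimates in terms of powers $(1-L)^{2^m d+t}$ with rate $\rho^{2^m\cdot 2d}$, which is mismatched with the stated rate $\rho^k$. Bridging this gap requires either a direct geometric proof via a partition of unity and local Taylor expansions (feasible on compact manifolds with bounded geometry), or a duality argument combined with a Bernstein inequality in $\mathbf{E}_\omega(L)$. Once either route is carried out, the geometric rate $\rho\to 0$ in (\ref{qubSob}) follows immediately, completing the proof.
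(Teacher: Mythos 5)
Your part (2) and your reduction of part (1) are correct and agree with the intended argument (note that the survey states this theorem without proof; the argument lives in the cited works \cite{Pes04a}, \cite{Pes04c}, \cite{pg}): exactness on splines applied to the interpolant $s_{k}(f)$ turns the cubature error into $\left|\int_{M}(f-s_{k}(f))\,dx\right|$, and Cauchy--Schwarz reduces (\ref{qubSob}) to the single estimate $\|f-s_{k}(f)\|\leq (c_{0}\rho)^{k}\|L^{k/2}f\|$. The genuine gap is in how you propose to prove that estimate. The local inequality you invoke, $\|e\|_{L_{2}(B(x_{\nu},\rho))}\leq C\rho^{k}\|e\|_{H^{k}(B(x_{\nu},\rho))}$, is false for a function that vanishes only at the single lattice point inside the ball: taking $e$, in normal coordinates, to be a linear function vanishing at $x_{\nu}$ gives $\|e\|_{L_{2}(B)}\sim \rho^{1+d/2}$ while $\|e\|_{H^{k}(B)}$ is at least of order $\rho^{d/2}$, so the ratio is $\rho$, not $\rho^{k}$. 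A Bramble--Hilbert/Taylor argument gains $\rho^{k}$ only when the interpolation conditions annihilate all polynomials of degree $k-1$, and one point per ball kills only constants. Since this Jackson/Poincar\'e-type bound carries the entire content of (\ref{qubSob}), the proposal fails at its crucial step; your alternative route (duality plus a Bernstein inequality in ${\bf E}_{\omega}(L)$) is not developed enough to evaluate and is not how the bound is obtained.

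The mechanism that actually produces the high power of $\rho$ is global, not local. One first proves a base-case inequality $\|g\|\leq (C\rho)^{a}\|L^{a/2}g\|$ for functions $g$ vanishing on a $\rho$-lattice, with $a$ comparable to $d$ (this is where Sobolev embedding enters and why the theorem requires $k\geq d$), and then self-improves it by iterating the interpolation inequality $\|L^{a}g\|\leq \|g\|^{1/2}\,\|L^{2a}g\|^{1/2}$, valid for the self-adjoint $L$ by the spectral theorem: each substitution doubles the admissible exponent, and the function $g=f-s_{k}(f)$ still vanishes on $M_{\rho}$ at every stage. This doubling is precisely the origin of the dyadic exponents $2^{m}d$ you noticed in Theorem \ref{AT}; they are not a mismatch to be bridged but the fingerprint of the argument you are missing. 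One then uses the minimization/orthogonality property of the spline, as you correctly do, to bound $\|L^{k/2}(f-s_{k}(f))\|$ by a multiple of the norm of $f$. On that last point, one secondary flaw: your identity $\sum_{\nu}\lambda_{\nu}^{(k)}=\mathrm{vol}(M)$, claimed via Part (2) applied to $f\equiv 1$, does not follow under this paper's definition, because with the inhomogeneous functional $u\mapsto\|(1-L)^{k/2}u\|$ a nonzero constant is not a spline: it fails the distributional characterization $(1-L)^{k}s=\sum_{\nu}\alpha_{\nu}\delta_{x_{\nu}}$ of Theorem \ref{MainTheorem}, so passing to the homogeneous right-hand side $\|L^{k/2}f\|$ requires a separate normalization argument.
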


By applying the Bernstein inequality we obtain the following theorem.  This result explains our term "asymptotically correct cubature formulas".
\begin{theorem}There exists a $c_{0}=c_{0}(M)$ such that for any $f\in {\mathbf E}_{\omega}(L)$ one has 
\begin{equation}
\left|\int_{\bold M}f dx-\sum_{x_{\nu}\in M_{\rho}} \lambda_{\nu}^{(k)}f(x_{\nu})\right|\leq (c_{0}\rho\sqrt{\omega} )^{k}\|f\|,\label{qubPW}
\end{equation}
for $ k\geq d$.  If $c_{0}\rho \omega^{-1/2}<1$
the right-hand side in (\ref{qubPW}) goes to zero   for all $f\in {\mathbf E}_{\omega}(L)$ as long as $k$ goes to infinity.
\label{cubPW}
\end{theorem}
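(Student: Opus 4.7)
The plan is to deduce this theorem as an immediate corollary of the Sobolev-norm cubature estimate (\ref{qubSob}) by invoking a spectrally-defined Bernstein inequality for bandlimited functions. First I would fix $f\in {\mathbf E}_{\omega}(L)$ and note that by Definition \ref{bandlimited}, $f$ admits an expansion
\begin{equation*}
f=\sum_{\lambda_{j}\leq \omega}c_{j}(f)u_{j},\quad Lu_{j}=-\lambda_{j}u_{j},
\end{equation*}
so that applying the self-adjoint operator $L^{k/2}$ term by term and using Parseval's identity gives
\begin{equation*}
\|L^{k/2}f\|^{2}=\sum_{\lambda_{j}\leq \omega}\lambda_{j}^{k}|c_{j}(f)|^{2}\leq \omega^{k}\sum_{\lambda_{j}\leq \omega}|c_{j}(f)|^{2}=\omega^{k}\|f\|^{2}.
\end{equation*}
This is the Bernstein-type inequality $\|L^{k/2}f\|\leq \omega^{k/2}\|f\|$ for spectrally localized functions on $M$.

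Next I would insert this inequality into the previous theorem's estimate (\ref{qubSob}). Since $f\in {\mathbf E}_{\omega}(L)\subset H^{2k}(M)$ for every $k$, that theorem applies and yields
\begin{equation*}
\left|\int_{M}f\,dx-\sum_{x_{\nu}\in M_{\rho}}\lambda_{\nu}^{(k)}f(x_{\nu})\right|\leq (c_{0}\rho)^{k}\|L^{k/2}f\|\leq (c_{0}\rho)^{k}\omega^{k/2}\|f\|=(c_{0}\rho\sqrt{\omega})^{k}\|f\|,
\end{equation*}
valid for $k\geq d$, which is exactly (\ref{qubPW}). The constant $c_{0}$ is the same as in (\ref{qubSob}), depending only on $M$.

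For the asymptotic statement, observe that once the product $c_{0}\rho\sqrt{\omega}$ is strictly less than $1$, the geometric factor $(c_{0}\rho\sqrt{\omega})^{k}$ tends to zero as $k\to\infty$, so the cubature error vanishes uniformly on the unit ball of ${\mathbf E}_{\omega}(L)$. This is what justifies calling these formulas \emph{asymptotically correct}: for a fixed mesh that is merely dense enough relative to the bandwidth, increasing the spline order $k$ drives the approximation error to zero at a geometric rate.

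I do not expect any real obstacle: the theorem is a one-line consequence of combining the Sobolev estimate already proved with the elementary spectral bound $\|L^{k/2}f\|\leq \omega^{k/2}\|f\|$. The only thing to be careful about is that the constant $c_{0}$ is inherited unchanged from the previous theorem and does not depend on $\omega$, $\rho$, $k$, or $f$, so that the convergence threshold $c_{0}\rho\sqrt{\omega}<1$ is a genuine a priori condition on the mesh relative to the bandwidth.
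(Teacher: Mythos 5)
Your proof is correct and is essentially the paper's own argument: the paper derives this theorem by applying the Bernstein inequality $\|L^{k/2}f\|\leq \omega^{k/2}\|f\|$ for $f\in {\mathbf E}_{\omega}(L)$ to the Sobolev-norm cubature estimate (\ref{qubSob}) of the preceding theorem, exactly as you do. Your reading of the convergence condition as $c_{0}\rho\sqrt{\omega}<1$ (rather than the paper's literal $c_{0}\rho\omega^{-1/2}<1$, evidently a typo) is the right one.
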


\section{Bandlimited and localized Parseval frames on homogeneous manifolds}\label{frames}

In this section  we assume that a manifold $M$ is homogeneous in the sense that it is of the form  $M=G/H,$  where $G$ is a compact Lie group and $H$ is its closed subgroup (see subsection \ref{homman}).  In this situation we consider  spaces of bandlimited functions ${\bf E}_{\omega}(\mathcal{L}),\>\>\>\omega>0,$ with respect to  the Casimir operator $\mathcal{L}$ that was defined in (\ref{Casimir}).
Our goal is to construct a  tight bandlimited and localized frame in the space $L_{2}(M)$.

 Let $g\in C^{\infty}(\mathbb{R}_{+})$ be a monotonic function such that $supp\>g\subset [0,\>  2^{2}], $ and $g(s)=1$ for $s\in [0,\>1], \>0\leq g(s)\leq 1, \>s>0.$ Setting  $G(s)=g(s)-g(2^{2}s)$ implies that $0\leq G(s)\leq 1, \>\>s\in supp\>G\subset [2^{-2},\>2^{2}].$  Clearly, $supp\>G(2^{-2j}s)\subset [2^{2j-2}, 2^{2j+2}],\>j\geq 1.$ For the functions
 $
 \Phi(s)=\sqrt{g(s)}, \>\>\Phi(2^{-2j}s)=\sqrt{G(2^{-2j}s)},\>\>j\geq 1, \>\>\>
 $
 one has 
 $$
 \sum_{j\geq 0}\left(\Phi(2^{-2j}s)\right)^{2}=1, \>\>s\geq 0.
 $$
 Using the spectral theorem for $L$ one  obtains
$$
\sum_{j\geq 0} \Phi^2(2^{-2j}\mathcal{L})f = f,\>\>f \in L_{2}(M),
$$
and taking inner product with $f$ gives
\begin{equation}
\label{norm equality-0}
\|f\|^2=\sum_{j\geq 0}\left< \Phi^2(2^{-2j}\mathcal{L})f,f\right>=\sum_{j\geq 0}\|\Phi(2^{-2j}\mathcal{L})f\|^2 .
\end{equation}
Moreover, since the function $  \Phi(2^{-2j}s)$ has its support in  $
[2^{2j-2},\>\>2^{2j+2}]$ the elements $ \Phi(2^{-2j}\mathcal{L})f $ are bandlimited to  $
[2^{2j-2},\>\>2^{2j+2}]$.

Expanding $f \in L_{2}(M)$ in terms of eigenfunctions of $\mathcal{L}$ we obtain
$$
\Phi({2^{-2j} \mathcal{L}})f= \sum_i \Phi(2^{-2j}\lambda_i)c_i(f) u_i,\>\>\>c_i(f)=\left<f, u_{i}\right>.
$$  
Since for every $j$ function $\Phi(2^{-2j}s)$ is supported in the interval $[2^{2j-2}, 2^{2j+2}]$ the function $\Phi({2^{-2j}\mathcal{L}})f(x),\>\>x\in M, $ is bandlimited and belongs to ${\bf E}_{2^{2j+2}}({\mathcal{L}})$.
But then the function 
$\overline{\Phi({2^{-2j} \mathcal{L}})f(x)}$ is also in ${\bf E}_{2^{2j+2}}({ \mathcal{L}})$.
Since 
$$
|\Phi({2^{-2j}\mathcal{L}})f(x)|^2=\left[\Phi({2^{-2j} \mathcal{L}})f(x)\right]\left[\overline{\Phi({2^{-2j} \mathcal{L}})f(x)}\right],
$$
one can use  Theorem \ref{prodthm} to conclude that  
$
|\Phi({2^{-2j} \mathcal{L}})f|^2\in 
{\bf E}_{4m2^{2j+2}}({ \mathcal{L}}),
$
where $m=\dim G,\>\>{M}=G/H$.

To summarize, we proved, that for every $f\in L_{2}(M)$ we have the following decomposition 
\begin{equation}
\label{addto1sc}
\sum_{j\geq 0} \|\Phi({2^{-2j} \mathcal{L}})f\|^2 = \|f\|^2,\>\>\>\>\>
|\Phi({2^{-2j}\mathcal{L}})f(x)|^2\in 
{\bf E}_{4m2^{2j+2}}({ \mathcal{L}}).
\end{equation}
The next objective is to perform a discretization step. According to our Theorem \ref{cubature}  there exists a constant $c=c(M)>0$ such that for all integer $j$ if  
\begin{equation}
\label{rhoj}
\rho_j = c(4m2^{2j+2}+1)^{-1/2}\sim 2^{-j},\>\>m=\dim G, \>\>M=G/H, 
\end{equation}
then for any  $\rho_{j}$-lattice $M_{\rho_{j}}$ one can find coefficients $b_{j,k}$ with
\begin{equation}
\label{wtest1}
b_{j,k}\sim \rho_j^{d},\>\>\>d=\dim M,
\end{equation}
for which the following exact cubature formula holds
\begin{equation}
\label{cubl2}
\|\Phi({2^{-2j} \mathcal{L}})f\|^2 = \sum_{k=1}^{J_j}b_{j,k}|[\Phi({2^{-2j}\mathcal{L}})f](x_{j,k})|^2,
\end{equation}
where $x_{j,k} \in M_{\rho_j}$, ($k = 1,\ldots,J_j = card\>(M_{\rho_j})$). 

For each $x_{j,k}$ we define the functions
\begin{equation}
\label{vphijkdf}
\psi_{j,k}(y) = \overline{\mathcal{K}^{\Phi}_{2^{-j}}}(x_{j,k},y) = \sum_i \overline{\Phi}(2^{-2j}\lambda_i) \overline{u}_i(x_{j,k}) u_i(y),
\end{equation}
\begin{equation}
\label{phijkdf}
\Psi_{j,k} = \sqrt{b_{j,k}} \psi_{j,k}.
\end{equation}
We find that for all $f \in 
L_2(M)$,
\begin{equation}
\label{parfrm}
\|f\|^2 = \sum_{j,k} |\langle f,\Psi_{j,k} \rangle|^2.
\end{equation}
Moreover, one can show  \cite{gpes-1}, \cite{pg}, \cite{Tay81} that the frame members $\psi_{j,k}$ are strongly localized on the manifold. All together it implies the following statement \cite{gpes-1}, \cite{pg}.
\begin{theorem}\label{main-frames}
If $M$ is a homogeneous manifold, then the set of functions $\{\Psi_{j,k}\},$ constructed in (\ref{vphijkdf})-(\ref{phijkdf}) has the following properties:

\begin{enumerate}

\item  $\{\Psi_{j,k}\},$ is a Parseval frame in the space $L_{2}(M)$. 

\item  Every function $\Psi_{j,k}$ is bandlimited to $[2^{2j-2}, 2^{2j+2}]$.

\item  For  any $N>0$ there exists a $C( N)>0$ such  that uniformly in $j$ and $k$ 
  \begin{equation}\label{localization}
    |\psi_{j,k}(x)|\leq 
    C( N) \frac{2^{dj}}{\max\left(1, \>\>2^{j}d(x,\>x_{j,k})\right)^{N}},\>\>\>j\geq 0.
 \end{equation}

\item  The following reconstruction formula holds
\begin{equation}
\label{recon}
f = \sum_{j\geq 0}^{\infty}\sum_k \langle f,\Psi_{j,k} \rangle \Psi_{j,k} = 
\sum_{j\geq 0}^{\infty}\sum_k b_{j,k} \langle f,\psi_{j,k} \rangle \psi_{j,k},\>\>\>f \in L_2(M),
\end{equation}
with convergence in $L_2(M)$. 

\end{enumerate}

\end{theorem}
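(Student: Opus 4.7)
The plan is to assemble the four claims by combining the Littlewood--Paley type decomposition already derived in \eqref{addto1sc} with the exact cubature formula \eqref{cubl2} and smooth spectral calculus for $\mathcal{L}$.

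First I would observe that since $\Phi$ is real-valued, the operator $\Phi(2^{-2j}\mathcal{L})$ is self-adjoint on $L_2(M)$ with integral kernel $\mathcal{K}^{\Phi}_{2^{-j}}(x,y)=\sum_i \Phi(2^{-2j}\lambda_i)\overline{u}_i(x)u_i(y)$. Consequently $[\Phi(2^{-2j}\mathcal{L})f](x_{j,k}) = \langle f,\psi_{j,k}\rangle$ with $\psi_{j,k}$ as in \eqref{vphijkdf}. Substituting this reproducing identity into the cubature formula \eqref{cubl2} and then summing over $j$ via \eqref{norm equality-0} gives
\begin{equation*}
\|f\|^2 \;=\; \sum_{j\geq 0}\sum_{k=1}^{J_j} b_{j,k}\,|\langle f,\psi_{j,k}\rangle|^2 \;=\; \sum_{j,k}|\langle f,\Psi_{j,k}\rangle|^2,
\end{equation*}
which is claim (1). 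Claim (2) is immediate from \eqref{vphijkdf}: the spectral expansion involves only eigenvalues with $\Phi(2^{-2j}\lambda_i)\neq 0$, and $\mathrm{supp}\,\Phi(2^{-2j}\cdot)\subset[2^{2j-2},2^{2j+2}]$. The cubature step is legitimate because the pointwise squares $|\Phi(2^{-2j}\mathcal{L})f|^2$ lie in $\mathbf{E}_{4m\cdot 2^{2j+2}}(\mathcal{L})$ by the product property (Theorem \ref{prodthm}), and the $\rho_j$-lattices have been chosen with spacing matched to this bandwidth in \eqref{rhoj}.

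The main obstacle is the localization estimate (3). The heuristic is that, since $\Phi\in C_0^\infty(\mathbb{R}_+)$ is smooth and compactly supported, the Schwartz kernel of $\Phi(2^{-2j}\mathcal{L})$ should decay faster than any polynomial off the diagonal at scale $2^{-j}$. I would derive this along the lines of \cite{Tay81}: write $\Phi(2^{-2j}\mathcal{L})$ via the Fourier transform as an integral of the wave operators $\cos(t\sqrt{\mathcal{L}})$, exploit finite propagation speed (which forces the kernel to vanish whenever $d(x,y) > t$), and apply repeated integration by parts in the Fourier variable using the rapid decay of $\widehat{\Phi}$. The bi-invariance of $\mathcal{L}$ under $G$ provides uniformity in $k$, and the rescaling $\lambda\mapsto 2^{-2j}\lambda$ combined with Weyl-type bounds on eigenfunction sums produces the prefactor $2^{dj}$. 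The resulting pointwise bound on $\mathcal{K}^\Phi_{2^{-j}}(x_{j,k},\cdot)$ is exactly \eqref{localization}.

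Finally, claim (4) is the standard reconstruction formula for any Parseval frame: since $\|f\|^2=\sum_{j,k}|\langle f,\Psi_{j,k}\rangle|^2$, polarization yields $f=\sum_{j,k}\langle f,\Psi_{j,k}\rangle\Psi_{j,k}$ in $L_2(M)$, and rewriting $\Psi_{j,k}=\sqrt{b_{j,k}}\,\psi_{j,k}$ gives the second form in \eqref{recon}.
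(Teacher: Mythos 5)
Your proposal is correct and follows essentially the same route as the paper: the Littlewood--Paley partition $\sum_j \Phi^2(2^{-2j}\mathcal{L}) = I$, the product property (Theorem \ref{prodthm}) to make $|\Phi(2^{-2j}\mathcal{L})f|^2$ bandlimited, the positive cubature formula (Theorem \ref{cubature}) on $\rho_j$-lattices to discretize, and standard Parseval-frame theory for the reconstruction formula. Your sketch of the localization estimate via finite propagation speed and the smooth functional calculus is exactly the argument the paper delegates to \cite{gpes-1}, \cite{pg}, \cite{Tay81}.
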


By using Theorems \ref{cubature} and \ref{prodthm} one can easily obtain a following {\bf exact}  discrete formula for Fourier coefficients which uses only  samples of $f$ on a sufficiently dense lattice.

\begin{theorem}\label{DFT}
If $M$ is a homogeneous compact manifold then there exists a $c=c(M)>0$ such that for any $\omega>0$, if 
$
\rho_{\omega}=c(\omega+1)^{-1/2},
$  
then
for any $\rho_{\omega}$-lattice $M_{\rho_{\omega}}=\{x_{\nu}\}_{\nu=1}^{N_{\omega}}$ of $M$, there exist positive weights
$
\mu_{\nu}\asymp (\omega+1)^{-d/2}, 
 $
 such that for {\bf every}  function $f$  in ${\bf E}_{\omega}(\mathcal{L})$ the  Fourier coefficients $c_{i}\left( f\right)$  
 $$
 c_{i}(f)=\int_{M}f\overline{u_{i}},\>\>\>\> - \mathcal{L}u_{i}=\lambda_{i}u_{i},\>\>\>\lambda_{i}\leq \omega,  
 $$
  are given by the formulas 
 \begin{equation}\label{DFTF}
c_{i}\left( f\right)=\sum_{\nu=1}^{N_{\omega}}\mu_{\nu}  f(x_{\nu})\overline{u_{i}}(x_{\nu}).
\end{equation}
\end{theorem}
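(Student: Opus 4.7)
The plan is to combine the two cornerstone results, Theorem \ref{cubature} (positive cubature) and Theorem \ref{prodthm} (product property), in essentially one stroke. The starting point is the trivial identity
\begin{equation*}
c_{i}(f) \;=\; \int_{M} f\,\overline{u_{i}}\, dx,
\end{equation*}
so the whole question reduces to replacing this integral by a cubature sum that is \emph{exact} on the integrand $f\overline{u_{i}}$.

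First I would observe that both $f$ and $u_{i}$ lie in ${\bf E}_{\omega}(\mathcal{L})$: $f$ by hypothesis, and $u_{i}$ because $\lambda_{i}\leq \omega$. Since $\overline{u_{i}}$ is again an eigenfunction with the same eigenvalue, $\overline{u_{i}}\in {\bf E}_{\omega}(\mathcal{L})$ as well. Then Theorem \ref{prodthm} (applied with $m=\dim G$) yields
\begin{equation*}
f\,\overline{u_{i}} \;\in\; {\bf E}_{4m\omega}(\mathcal{L}).
\end{equation*}
So the integrand is bandlimited, with a controlled bandwidth proportional to $\omega$.

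Next I would invoke Theorem \ref{cubature} at bandwidth $4m\omega$. That theorem furnishes a constant $c(M)>0$ such that, provided $\rho \leq c(M)(4m\omega)^{-1/2}$, any $\rho$-lattice carries strictly positive weights $\mu_{\nu}\asymp \rho^{d}\asymp \omega^{-d/2}$ with
\begin{equation*}
\int_{M} h\, dx \;=\; \sum_{\nu} \mu_{\nu}\, h(x_{\nu}) \qquad \text{for every } h\in {\bf E}_{4m\omega}(\mathcal{L}).
\end{equation*}
Setting $c := c(M)/(2\sqrt{m})$ (so that $c(\omega+1)^{-1/2}\leq c(M)(4m\omega)^{-1/2}$ for all $\omega>0$), the prescription $\rho_{\omega}=c(\omega+1)^{-1/2}$ meets the hypothesis of Theorem \ref{cubature}. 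Applying the cubature identity to $h=f\overline{u_{i}}$ gives the advertised formula
\begin{equation*}
c_{i}(f) \;=\; \sum_{\nu=1}^{N_{\omega}} \mu_{\nu}\, f(x_{\nu})\,\overline{u_{i}}(x_{\nu}),
\end{equation*}
with the weights inheriting the bound $\mu_{\nu}\asymp (\omega+1)^{-d/2}$ from Theorem \ref{cubature}.

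There is no real obstacle in the argument; the only bookkeeping point is the choice of the constant $c$ in the lattice spacing, which must absorb both the factor $4m$ coming from the product property and the gap between $\omega$ and $\omega+1$ (which is why the theorem is stated with $(\omega+1)^{-1/2}$ rather than $\omega^{-1/2}$, giving a uniform bound valid all the way down to $\omega\to 0$). Everything else is a direct composition of the two already-quoted theorems.
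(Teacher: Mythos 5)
Your proposal is correct and is essentially the paper's own argument: the paper proves Theorem \ref{DFT} exactly by combining Theorem \ref{prodthm} (so that $f\overline{u_{i}}\in {\bf E}_{4m\omega}(\mathcal{L})$, using that $\overline{u_{i}}$ is again an eigenfunction) with the exact positive cubature of Theorem \ref{cubature} at the enlarged bandwidth, absorbing the factor $4m$ and the $\omega$ versus $\omega+1$ discrepancy into the constant $c(M)$, just as you do (compare the paper's explicit proof of the analogous discrete inversion formula in subsection \ref{exactFCS}).
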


Theorems \ref{main-frames}, \ref{DFT}, \ref{cubature}, and \ref{prodthm} can be used to prove another exact discrete formula for Fourier coefficients which uses frame functions 

 \begin{theorem}\label{D-RepFrame}
 For a compact homogeneous manifold $M$
there exists a constant $c=c(M)>0$ such that for any  natural  $J$ if 
$$
\rho_{J}=c2^{-J}
$$
 then
for any $\rho_{J}$-lattice $M_{\rho_{J}}=\{x^{*}_{\nu}\}_{\nu=1}^{N_{\omega}},$ there exist positive weights 
$$
\mu_{\nu}^{*} \asymp2^{-dJ},\>\>\>d=dim\>M,
$$ 
such that the following  formula holds
\begin{equation}\label{discrrep}
f = \sum_{j=0}^{J}\sum_{k=1}^{n_{j}} \sum_{\nu=1}^{N_{J}}\mu_{\nu}^{*}f(x^{*}_{\nu})\phi_{j,k}(x^{*}_{\nu})\phi_{j,k},\>\>\>f\in {\bf E}_{\omega}({\mathcal L}).
\end{equation}
\end{theorem}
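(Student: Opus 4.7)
The strategy is to combine the Parseval-frame reconstruction from Theorem \ref{main-frames} with an exact cubature formula supplied by Theorem \ref{cubature}, using the product property of Theorem \ref{prodthm} to ensure that the integrands whose inner products we must compute all remain bandlimited below a single bandwidth controlled by $J$. Denote by $\phi_{j,k}$ the frame functions $\Psi_{j,k}$ built in (\ref{vphijkdf})--(\ref{phijkdf}), so that Theorem \ref{main-frames} gives
\begin{equation*}
f=\sum_{j\geq 0}\sum_{k}\langle f,\phi_{j,k}\rangle\,\phi_{j,k}\qquad \text{in } L_2(M).
\end{equation*}

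First I would truncate the $j$-sum. Because $\phi_{j,k}$ is bandlimited to $[2^{2j-2},2^{2j+2}]$, the orthogonality of spectral subspaces forces $\langle f,\phi_{j,k}\rangle=0$ for $f\in\mathbf E_{\omega}(\mathcal L)$ as soon as $2^{2j-2}>\omega$. Under the compatibility between $\omega$ and $J$ implicit in the statement (which we may take as $\omega\leq 2^{2J}$), this collapses the frame expansion to the finite sum $f=\sum_{j=0}^{J}\sum_{k=1}^{n_j}\langle f,\phi_{j,k}\rangle\phi_{j,k}$. Next I would discretize each $\langle f,\phi_{j,k}\rangle=\int_{M}f\,\overline{\phi_{j,k}}\,dx$. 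For every $0\leq j\leq J$, the integrand has one factor in $\mathbf E_{\omega}(\mathcal L)\subset \mathbf E_{2^{2J}}(\mathcal L)$ and the other in $\mathbf E_{2^{2J+2}}(\mathcal L)$, so Theorem \ref{prodthm} places $f\,\overline{\phi_{j,k}}$ inside $\mathbf E_{\Lambda_{J}}(\mathcal L)$ with $\Lambda_J\leq 4m\cdot 2^{2J+2}$, where $m=\dim G$. By Theorem \ref{cubature}, if we choose the constant $c=c(M)$ so that $\rho_J=c\,2^{-J}$ meets the mesh condition $\rho_J\leq c(M)\Lambda_J^{-1/2}$, then a single $\rho_J$-lattice $M_{\rho_J}=\{x^{*}_{\nu}\}_{\nu=1}^{N_J}$ admits positive weights $\mu^{*}_{\nu}\asymp \rho_J^{d}\asymp 2^{-dJ}$ for which
\begin{equation*}
\langle f,\phi_{j,k}\rangle=\sum_{\nu=1}^{N_J}\mu^{*}_{\nu}\,f(x^{*}_{\nu})\,\overline{\phi_{j,k}(x^{*}_{\nu})}
\end{equation*}
holds \emph{exactly} and, crucially, simultaneously for every admissible $(j,k)$. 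Substituting this identity into the truncated frame expansion and interchanging the finite sums yields the asserted formula (\ref{discrrep}), up to the purely notational conjugation on $\phi_{j,k}(x^*_\nu)$ that is absorbed by working with real eigenbases, or by the conjugation-symmetry of the eigenspaces.

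The main obstacle, and essentially the only technical point, is the uniform choice of the constant $c=c(M)$. The mesh in Theorem \ref{cubature} is calibrated to a specific bandwidth, whereas the bandwidth of $f\,\overline{\phi_{j,k}}$ varies with $j$. The task is therefore to absorb the factor $4m$ from Theorem \ref{prodthm}, the outer envelope $2^{2J+2}$ on the frame bandwidths, and the proportionality constant from Theorem \ref{cubature} into a single $c(M)$ that makes $\rho_J=c\,2^{-J}$ admissible at the coarsest (hence strongest) level $j=J$; this choice then automatically services all $j<J$ since the corresponding $\Lambda_j$ are smaller. Once this uniform mesh is fixed, the weight asymptotic $\mu^{*}_{\nu}\asymp 2^{-dJ}$ is inherited directly from the $\omega^{-d/2}$ estimate of Theorem \ref{cubature} applied at $\omega\asymp 2^{2J}$, and no further estimates are required.
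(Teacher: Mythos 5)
Your proposal is correct and follows essentially the same route the paper intends: truncate the Parseval frame expansion of Theorem \ref{main-frames} for $f\in {\bf E}_{\omega}(\mathcal{L})$ with $\omega\leq 2^{2J}$, use the product property (Theorem \ref{prodthm}) to place every integrand $f\,\overline{\Psi_{j,k}}$, $0\leq j\leq J$, in a single space ${\bf E}_{\Lambda_J}(\mathcal{L})$ with $\Lambda_J\asymp 4m\,2^{2J+2}$, and then discretize all the inner products simultaneously by one exact positive cubature formula (Theorem \ref{cubature}) on a $\rho_J$-lattice with $\rho_J\asymp 2^{-J}$, absorbing the constants into $c(M)$ and inheriting $\mu_{\nu}^{*}\asymp 2^{-dJ}$. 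The only cosmetic difference is that the paper channels the discretization step through its Theorem \ref{DFT} (exact discrete Fourier coefficients), which is itself just the combination of Theorems \ref{cubature} and \ref{prodthm} that you apply directly.
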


\section{Applications  to Radon transform on $S^{d}$ }\label{Ssplines}

\subsection{Approximate inversion of the spherical Radon transform using generalized splines} We consider approximate inversion  of the Radon transform on $S^{d}$ (see subsection \ref{Radon-spheres}) when only a finite number of integrals over equatorial subspheres is given.  
Let $\{w_{\nu}\}, \nu=1,2,...,N, $ be a finite set of equatorial
subspheres on $S^{d}$ of codimension one and distributions
$F_{\nu}$ are given by
 formulas
 $$
 F_{\nu}(f)=\int_{w_{\nu}}fdx.
 $$
 By solving corresponding variational problem we can find a
spline $s_{t}(f)\in H_{t}(S^{d})$ such that
$$
F_{\nu}(s_{t}(f))=F_{\nu}(f), \nu=1,2,...,N,
$$
and $s_{t}(f)$ minimizes norm $\|(1-\Delta)^{t/2}s_{t}(f)\|$ where $\Delta$ is the Laplace-Beltrami operator in $L_{2}(S^{d})$.

Our Theorem \ref{MainTheorem} in the case of  the spherical Radon transform is summarized in
the following statement.

\begin{theorem}\label{Main-Applic}
For a given symmetric $\rho$- lattice $W=\{w_{\nu}\}$ of
equatorial
 subspheres $\{w_{\nu}\}, \nu=1,2,...,N,$
an even smooth function $f$   and any $t>d/2$ define $s_{t}(f)$
by the formula
$$
s_{t}(f)=\sum_{i,k}c_{i,k}(s_{t}(f))Y^{i}_{k},
$$
where
$$
c_{i,k}(s_{t}(f))=(1+\lambda_{i,k})^{-t}\sum_{\nu=1}^{N}\alpha_{\nu}(s_{t}(f))
\int_{w_{\nu}}Y^{i}_{k}dx,
$$
and
$$
\sum_{\nu=1}^{N}b_{\nu\mu}\alpha_{\nu}(s_{t}(f))=v_{\mu},\>\>\>
v_{\mu}=\int_{w_{\mu}}fdx,\>\>\> \mu=1,2,...,N,
$$
$$b_{\nu\mu}=\sum_{i,k}(1+\lambda_{i,k})^{-t}\int_{w_{\nu}}Y^{i}_{k}dx
\int_{w_{\mu}}Y^{i}_{k}dx.
$$

The function $s_{t}(f)$   has the following properties.

\begin{enumerate}
\item The function $s_{t}(f)$ is even.
\item  Integrals of $s_{t}(f)$ over subspheres $w_{\nu}$ have
prescribed values $v_{\nu}$

$$
\int_{w_{\nu}}s_{t}(f)dx= v_{\nu}, \nu=1,2,...,N;
$$

\item  among all functions that satisfy (2) function $s_{t}(f)$
minimizes
 the Sobolev norm
 $$
 \|(1-\Delta)^{t/2}s_{t}(f)\|=\left(\sum_{\nu=1}^{N}\alpha_{\nu}(s_{t}(f))v_{\nu}\right)^{1/2}.
 $$

\item The function $s_{t}(f)$ is the center of the convex set
$Q(F,f,t,K)$ of all functions $h$ from $H_{t}(S^{d})$ that satisfy both the condition 
(2) and the inequality
$$
\|(1-\Delta)^{t/2}h\|\leq K,
$$
for any fixed $K\geq
\left(\sum_{\nu=1}^{N}\alpha_{\nu}(s_{t}(f))v_{\nu}\right)^{1/2}.$
In other words for any $h\in Q(F, f,t,K)$
$$
\|s_{t}(f)-h\|_{t}\leq\frac{1}{2}diam\> Q(F, f, t, K).
$$
\end{enumerate}

\end{theorem}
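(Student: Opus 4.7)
The plan is to reduce the statement to the general Theorem \ref{MainTheorem} and Theorem \ref{optim}, with the spherical specifics (central symmetry of great subspheres and the parity of spherical harmonics) handled separately. I set $M=S^d$, $L=\Delta$, and $F_{\nu}(f)=\int_{w_{\nu}}f\,dx$. Since every $w_{\nu}$ is a smooth $(d-1)$-submanifold, each $F_{\nu}$ is a continuous functional on $H_{d/2+\varepsilon}(S^d)$ for any $\varepsilon>0$, so the setup of Section \ref{splines} applies. I verify the Independence Assumption by constructing, for each fixed $\mu$, a bump function $\vartheta_{\mu}\in C^{\infty}(S^d)$ supported in a small neighborhood of a point of $w_{\mu}$ that avoids all other $w_{\nu}$: this is possible because finitely many pairwise distinct great subspheres intersect $w_{\mu}$ in a set of lower dimension, so such a point always exists. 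Rescaling and a finite linear adjustment yield the functions satisfying $F_{\nu}(\vartheta_{\mu})=\delta_{\nu\mu}$.

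With these ingredients Theorem \ref{MainTheorem} applies and produces existence, uniqueness, and the explicit Fourier series / linear system for $s_{t}(f)$ exactly as displayed in the statement. Property (2) is the defining interpolation constraint of the variational problem. Property (3) is the minimization property built into the variational problem, and the identity
$$\|(1-\Delta)^{t/2}s_{t}(f)\|^{2}=\langle (1-\Delta)^{t}s_{t}(f),s_{t}(f)\rangle=\sum_{\nu=1}^{N}\alpha_{\nu}(s_{t}(f))\,\overline{F_{\nu}(s_{t}(f))}=\sum_{\nu=1}^{N}\alpha_{\nu}(s_{t}(f))\,v_{\nu}$$
follows by pairing equation (\ref{Main Equation-1}) with $s_{t}(f)$ and using $F_{\nu}(s_{t}(f))=v_{\nu}$ (the $v_{\nu}$ being real for real-valued $f$). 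Property (4) is then an immediate specialization of Theorem \ref{optim} to this choice of $M$, $L$, and $F$.

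The remaining point, property (1), is the new spherical content. The key geometric fact is that every equatorial subsphere is centrally symmetric: $w_{\nu}=\theta_{\nu}^{\perp}\cap S^d$ satisfies $-w_{\nu}=w_{\nu}$. Combined with the parity relation (\ref{symm}), $Y_{k}^{i}(-x)=(-1)^{k}Y_{k}^{i}(x)$, this forces
$$F_{\nu}(Y_{k}^{i})=\int_{w_{\nu}}Y_{k}^{i}\,dx=0 \quad\text{whenever } k \text{ is odd}.$$
Feeding this into the Fourier formula $c_{i,k}(s_{t}(f))=(1+\lambda_{i,k})^{-t}\sum_{\nu}\alpha_{\nu}(s_{t}(f))\int_{w_{\nu}}Y_{k}^{i}\,dx$ kills all odd-degree Fourier coefficients of $s_{t}(f)$, which is exactly the definition of an even function on $S^d$.

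The only real obstacle I anticipate is verifying the Independence Assumption cleanly for the family of integration functionals over great subspheres; once that is in place, properties (2)--(4) are immediate instances of the general theory of Section \ref{splines}, and property (1) reduces to the symmetry argument above. A minor bookkeeping point is that one should check that the exponent $t$ in the hypothesis matches the regularity needed for the series defining the $\beta_{\nu\mu}$ in Theorem \ref{MainTheorem} to converge absolutely on $S^d$, which is guaranteed by Weyl's asymptotics for $\Delta$.
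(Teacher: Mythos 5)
Your proposal is correct and takes essentially the same route as the paper: properties (2)--(4) are read off from the general variational-spline results (Theorems \ref{MainTheorem} and \ref{optim}) applied to $M=S^{d}$, $L=\Delta$, $F_{\nu}(f)=\int_{w_{\nu}}f\,dx$, and property (1) is proved exactly as the paper does, by combining the central symmetry $-w_{\nu}=w_{\nu}$ of equatorial subspheres with the parity relation (\ref{symm}) to conclude $\int_{w_{\nu}}Y_{k}^{i}\,dx=0$ for odd $k$, hence $c_{i,k}(s_{t}(f))=0$ for odd $k$. Your explicit verification of the Independence Assumption via bump functions and your derivation of the norm identity in (3) are details the paper leaves implicit (deferring to \cite{Pes04c}), not a departure in method.
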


To prove item (1) one has to note that because of (\ref{symm}) integrals of $Y_{k}^{i}$ over great circles are zero as long as $k$ is odd. It implies that $c_{i,k}(s_{t}(f))=0$ for every odd $k$ which means that  decomposition of the corresponding spline $s_{t}(f)$   contains only harmonics $Y_{k}^{i}$ with even $k$.

\subsection{ A sampling theorem for the spherical Radon transform of bandlimited functions on $S^{d}$}\label{pointwise}

According to  Theorem \ref{SRT} if $f\in H_{t}^{even}(S^{d})$ then $R f\in H_{t+(d-1)/2}^{even}(S^{d})$. 
Integral of $f$ over a great subsphere  $w_{\nu}$ is the value of $R f$ at a point $x_{\nu}$ where $\{x_{\nu}\}=M_{\rho}\subset (S^{d})$ is a $\rho$-lattice.  For a fixed integer $m\geq 0$ and $t>d/2$ we  apply Theorem \ref{MainTheorem} to the manifold $S^{d}$ and the set of distributions $F=\{\delta_{x_{\nu}}\}$ where
$$
\delta_{x_{\nu}}(g)=g(x_{\nu}),\>\>\>g\in C^{0}(S^{2}\times S^{2}),\>\>x_{\nu}\in M_{\rho}\subset S^{d}
$$
to construct  spline  $s_{\tau}(R f)$ with
$
\tau=2^{m}d+t+(d-1)/2
$
 which interpolates $Rf$ on $M_{\rho}=\{x_{\nu}\}$ and minimizes functional \begin{equation}\label{functional20}
u\rightarrow \|(1- \Delta)^{\tau/2}u\|,
\end{equation}
where $\Delta$ is the Laplace-Beltrami operator in $L_{2}(S^{d})$.
 Fourier coefficients of $s_{\tau}(R f)$ with respect to the basis $\left\{Y_{k}^{i}\right\}$ can be obtained by using Theorem \ref{MainTheorem}. 
Let $\widehat{s}_{\tau}(R f)$ be the orthogonal projection (in the norm of $H_{t+(d-1)/2}(S^{d})$) of $s_{\tau}(R f)$ onto subspace 
$
H_{\tau}^{even}(S^{d}).
$
It means that $\widehat{s}_{\tau}(R f)$ has a representation of the form
\begin{equation}\label{shat}
\widehat{s}_{\tau}(Rf)(x)=\sum_{k}\sum_{i} c_{i}^{2k}(R f)Y_{2k}^{i}(x),\>\>\>x\in S^{d},
\end{equation}
where
$$
c_{i}^{k}(R f)=c_{i}^{k}(R f; m, t)=\int_{ S^{d}}s_{\tau}(R f)(x)\overline{Y_k^i(x)}dx 
$$
are Fourier coefficients of $s_{\tau}(Rf)$. 
Applying (\ref{inverse}) we obtain that the following function defined on $S^{d}$ 
\begin{equation}\label{bigS}
S_{\tau}(f)=R^{-1}\left(\widehat{s}_{\tau}(R f)\right)
\end{equation}
has a representation 
$$
S_{\tau}(f)=\frac{\sqrt{\pi}}{\Gamma((d+1)/2)}\sum_{k}\sum_{i} \frac{c^{i}_{2k}(R f)}{r_{2m}}Y_{2k}^{i},
$$
where $r_{2k}$ are  defined in  (\ref{r}).
Let us stress that these functions do not interpolate $f$ in any sense. However, as it will be shown they can be used to approximate $f$.

In the following theorem we assume that a $\rho$-lattice $M_{\rho}=\{x_{\nu}\}$ is a subset of points  on the sphere $S^{d}$ and $M_{\rho}$  is dual to a collection of great subspheres $\{w_{\nu}\}$. we also assume that functions $S_{\tau}(f)$ constructed using vales of $Rf$ on $M_{\rho}$.
One can prove the following Theorem (see \cite{Pes04c}).

\begin{theorem}\label{last-thm555}
If $t>d/2$ then there exists a constant $C=C(d,t)>0$ such that for  any $\rho$-lattice  $M_{\rho}=\{x_{\nu}\}\subset S^{d}$ with sufficiently small $\rho>0$  and any sufficiently  smooth
function $f$ on $S^{d}$ the following inequality holds true
\begin{equation}\label{Th62}
\|\left(S_{\tau}(f)-f\right)\|_{H_{t}(S^{d})}\leq
 2(C\rho^{2d})^{2^{m-1}}\|Rf\|_{H_{\tau}(S^{d})},\>\>\>\tau=2^{m}d+t+(d-1)/2,
\end{equation}
for any $m=0, 1, ... \>\>.\>\>\>$ In particular, if a natural $k$ satisfies the inequality $t>k+d/2$, then 
\begin{equation}
\|S_{\tau}( f)-  f\|_{C^{k}(S^{d})}\leq 
2\left(C\rho^{2d}\right)^{ 2^{m-1}} \|R f\|_{H_{\tau}(S^{d})}
\end{equation}
for any $m=0, 1, ... .$ 
\end{theorem}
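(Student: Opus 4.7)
The plan is to reduce the claim to a Sobolev-norm estimate for the spline interpolation error of $Rf$, and then to establish a Sobolev-norm analogue of the Approximation Theorem \ref{AT}. Since the subsection setup requires $f$ to be even, we have $f = R^{-1}(Rf)$, so
$$
S_\tau(f) - f = R^{-1}\bigl(\widehat{s}_\tau(Rf) - Rf\bigr).
$$
Because $Rf$ is already even, the orthogonal projection onto the even subspace fixes it; since $\Delta$ commutes with the even/odd splitting, this projection is a contraction in every $H_s$-norm, giving
$$
\|\widehat{s}_\tau(Rf) - Rf\|_{H_{t+(d-1)/2}(S^d)} \leq \|s_\tau(Rf) - Rf\|_{H_{t+(d-1)/2}(S^d)}.
$$
Combining this with the continuity of $R^{-1}\colon H^{\text{even}}_{t+(d-1)/2}(S^d)\to H^{\text{even}}_t(S^d)$ from Theorem \ref{SRT} reduces everything to estimating the right-hand side.

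Next I would set $g := s_\tau(Rf) - Rf$. It vanishes on $M_\rho$, and the minimization property of $s_\tau(Rf)$ among interpolants, applied with $Rf$ itself as a competitor, yields $\|s_\tau(Rf)\|_{H_\tau} \leq \|Rf\|_{H_\tau}$, hence $\|g\|_{H_\tau} \leq 2\|Rf\|_{H_\tau}$. This is precisely the source of the factor $2$ in the conclusion, and it already converts the problem into controlling $g$ in the lower-index norm $H_{t+(d-1)/2}$ given control in the higher-index norm $H_\tau$ and the vanishing of $g$ on the lattice.

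The heart of the matter is therefore a Sobolev-norm Poincaré-type inequality: for any $h\in H_\tau(S^d)$ with $\tau = 2^m d + s$ (where $s = t+(d-1)/2$) that vanishes on a sufficiently fine $\rho$-lattice,
$$
\|h\|_{H_s(S^d)} \leq (C\rho^{2d})^{2^{m-1}} \|h\|_{H_\tau(S^d)}.
$$
This is the $L_2$-Sobolev analogue of Theorem \ref{AT}. I would prove it by induction on $m$. The base case uses the cover of $S^d$ by the $\rho$-balls of $M_\rho$ guaranteed by Definition \ref{lattice}; on each ball, the vanishing of $h$ at the center combined with a local Bramble--Hilbert-type estimate yields the $\rho^{2d}$ gain, and summing over the cover gives the global inequality. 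The inductive step is the doubling trick already used to prove Theorem \ref{AT}: at each stage one replaces $h$ by a fresh auxiliary variational spline of twice the smoothness order that interpolates the appropriate higher-order data of $h$; this preserves the vanishing on $M_\rho$ and lets the $\rho^{2d}$ gain compound dyadically, producing the $2^{m-1}$ exponent rather than a merely linear one.

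The main obstacle is this inductive step, because a naive iteration fails: applying $(1-\Delta)$ to $h$ destroys the vanishing of $h$ on the lattice and so breaks the Poincaré inequality. Overcoming this requires the cascade of auxiliary interpolating splines, each handled by Theorem \ref{MainTheorem}, and careful tracking of constants across the dyadic levels so that they collapse into the single constant $C$ in the final statement. Once the Sobolev estimate is in hand, the $C^k$ inequality follows immediately from the Sobolev embedding $H_t(S^d) \hookrightarrow C^k(S^d)$, which is valid precisely under the hypothesis $t > k + d/2$.
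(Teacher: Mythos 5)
Your reduction is exactly the one the paper (following \cite{Pes04c}) uses: write $S_{\tau}(f)-f=R^{-1}\left(\widehat{s}_{\tau}(Rf)-Rf\right)$ using that $f$ is even, observe that the projection onto the even subspace fixes $Rf$ and is an orthogonal projection in every $H_{s}$-norm (parity commutes with $(1-\Delta)^{s/2}$), invoke the continuity of $R^{-1}\colon H^{even}_{t+(d-1)/2}(S^{d})\to H^{even}_{t}(S^{d})$ from Theorem \ref{SRT}, and extract the factor $2$ from the minimization property $\|s_{\tau}(Rf)\|_{H_{\tau}}\le \|Rf\|_{H_{\tau}}$. You have also correctly identified the key lemma: a Poincar\'e-type inequality $\|h\|_{H_{s}}\le (C\rho^{2d})^{2^{m-1}}\|h\|_{H_{s+2^{m}d}}$ for functions $h$ vanishing on the lattice, and your base case ($m=0$, local estimates on the $\rho$-balls of the cover) is the right one.

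The gap is in the inductive step, which is the entire technical content of the lemma. Your proposed mechanism --- replacing $h$ at each stage by ``a fresh auxiliary variational spline of twice the smoothness order that interpolates the appropriate higher-order data of $h$'' --- does not work as stated: the variational interpolant of data vanishing on $M_{\rho}$ is the zero spline, so interpolating $h$ itself gives nothing new, while interpolating data such as the values of $(1-\Delta)^{a}h$ produces an auxiliary spline whose relevant Sobolev norms are of the same size as $\|h\|_{H_{s+2a}}$, not small, and you give no way to control it; the argument does not close. What actually makes the doubling work requires no auxiliary splines at all: it is the spectral Cauchy--Schwarz (convexity) inequality for the Sobolev scale,
\[
\|h\|^{2}_{H_{(a+b)/2}}\le \|h\|_{H_{a}}\,\|h\|_{H_{b}},
\]
applied to the \emph{same} function $h$, with $a=s$ and $b=s+2^{m+1}d$. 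Indeed, if the level-$m$ inequality holds for $h$ (which vanishes on $M_{\rho}$ once and for all --- the vanishing is never destroyed because $h$ is never differentiated), then
\[
\|h\|^{2}_{H_{s+2^{m}d}}\le \|h\|_{H_{s}}\|h\|_{H_{s+2^{m+1}d}}\le (C\rho^{2d})^{2^{m-1}}\|h\|_{H_{s+2^{m}d}}\|h\|_{H_{s+2^{m+1}d}},
\]
hence $\|h\|_{H_{s+2^{m}d}}\le (C\rho^{2d})^{2^{m-1}}\|h\|_{H_{s+2^{m+1}d}}$, and substituting this back into the level-$m$ inequality yields the level-$(m+1)$ inequality with constant $(C\rho^{2d})^{2^{m}}$. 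This single elementary inequality is precisely what produces the exponent $2^{m-1}$ and the multiplicative compounding of constants; without it (or an equally precise substitute) your induction, and therefore the proof, is incomplete at its central step.
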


For an $\omega>0$ let us consider the subspace ${\bf E}_{\omega}^{even}(S^{d})$ of even $\omega$-bandlimited functions on $S^{d}$. Clearly, this subspace is invariant under $R$. 

Note, that for functions in ${\bf E}_{\omega}^{even}(S^{d})$
 the following Bernstein-type inequality holds 
 $$
\|(1-\Delta)^{s}Rf\|_{L^{2}(S^{d})}  \leq (1+\omega)^{s}\|Rf\|_{L^{2}(S^{d})} .
$$
As a consequence of the previous Theorem we obtain the next one (see \cite{Pes04c}.
\begin{theorem}(Sampling Theorem For Radon Transform)\label{SSTh}.
\label{SamplingT}
If $t>d/2$ then there exist  constant $C=C(d, t)>0$ such that for  any $\rho$-lattice  $M_{\rho}=\{x_{\nu}\}\subset S^{d}$ with sufficiently small $\rho>0$  and any  $f\in {\bf E}_{\omega}^{even}(S^{d})$
  one has the estimate
\begin{equation}
\|\left(S_{2^{m}d+t+(d-1)/2}(f)-f\right)\|_{H_{t}(S^{d})}\leq
$$
$$
2(1+\omega)^{t/2+(d-1)/2}\left(C\rho^{2}(1+\omega)\right)^{2^{m-1}d} \| Rf\|_{L_{2}(S^{d})},
 \end{equation}
for any $m=0, 1, ... \>\>.\>\>\>$ In particular, if a natural $k$ satisfies the inequality $t>k+d/2$, then 
\begin{equation}
\|S_{2^{m}d+t+(d-1)/2}( f)-  f\|_{C^{k}(S^{d})}\leq
$$
$$
 2(1+\omega)^{t/2+(d-1)/2}\left(C\rho^{2}(1+\omega)\right)^{2^{m-1}d} \| Rf\|_{L_{2}(S^{d})},
\end{equation}
for any $m=0, 1, ... .$ 
\end{theorem}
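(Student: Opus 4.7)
The statement is essentially a corollary obtained by plugging the Bernstein-type inequality for bandlimited functions into the general estimate of Theorem \ref{last-thm555}, so the plan is quite short.

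First I would observe that if $f \in \mathbf{E}_{\omega}^{even}(S^{d})$ then $Rf$ belongs to the same bandlimited class. Indeed, as noted right after (\ref{r}), the operators $R$ and $\Delta$ commute on smooth functions, so $R$ maps each eigenspace of $\Delta$ into itself; since $R$ also preserves evenness (from the explicit action on Fourier--Laplace coefficients via $r_k$), the space $\mathbf{E}_{\omega}^{even}(S^{d})$ is $R$-invariant. In particular the displayed Bernstein-type inequality applies to $Rf$ for every exponent $s$.

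Next I would set $\tau = 2^{m}d + t + (d-1)/2$ and simply invoke Theorem \ref{last-thm555}, which gives
\begin{equation*}
\|S_{\tau}(f) - f\|_{H_{t}(S^{d})} \;\le\; 2\bigl(C\rho^{2d}\bigr)^{2^{m-1}} \|Rf\|_{H_{\tau}(S^{d})}.
\end{equation*}
Using the graph-norm definition $\|Rf\|_{H_{\tau}(S^{d})} = \|(1-\Delta)^{\tau/2} Rf\|_{L_{2}(S^{d})}$ together with the Bernstein-type inequality for functions in $\mathbf{E}_{\omega}^{even}(S^{d})$, I bound
\begin{equation*}
\|Rf\|_{H_{\tau}(S^{d})} \;\le\; (1+\omega)^{\tau/2}\, \|Rf\|_{L_{2}(S^{d})}.
\end{equation*}
Substituting this into the preceding line and rearranging the powers of $(1+\omega)$ and $\rho$ so that the factor $(1+\omega)^{2^{m-1}d}$ is combined with $\rho^{2 \cdot 2^{m-1} d}$ inside the base, and the remaining powers $(1+\omega)^{t/2 + (d-1)/2}$ are pulled out as a prefactor, yields exactly the bound in the theorem (after absorbing a universal constant into $C = C(d,t)$).

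For the $C^{k}$-estimate I would apply the Sobolev embedding $H_{t}(S^{d}) \hookrightarrow C^{k}(S^{d})$, which holds precisely when $t > k + d/2$, to the $H_{t}$-bound just proved. The only genuinely delicate step is the exponent bookkeeping in the reduction from Theorem \ref{last-thm555} to the stated form; everything else is a direct composition of the two results already on the table, so I do not anticipate any real analytic obstacle.
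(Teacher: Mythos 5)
Your proposal is correct and takes essentially the same route as the paper: observe that $\mathbf{E}_{\omega}^{even}(S^{d})$ is $R$-invariant, apply Theorem \ref{last-thm555} with $\tau=2^{m}d+t+(d-1)/2$, bound $\|Rf\|_{H_{\tau}(S^{d})}$ by the Bernstein-type inequality, rewrite $\left(C\rho^{2d}\right)^{2^{m-1}}=\left(C^{1/d}\rho^{2}\right)^{2^{m-1}d}$, and use Sobolev embedding for the $C^{k}$ part. One minor bookkeeping remark: your rearrangement actually leaves the prefactor $(1+\omega)^{t/2+(d-1)/4}$ rather than $(1+\omega)^{t/2+(d-1)/2}$, i.e.\ a slightly stronger estimate than stated, so the theorem follows a fortiori.
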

This Theorem \ref{SSTh} shows that if 
$
\rho<c\omega^{-1/2}
$
then every $f\in {\bf E}_{\omega}(S^{d})$ is completely determined by a finite  set of values 
$$
Rf(x_{\nu})=\int_{w_{\nu}} f,\>\>\>\>\>\>x_{\nu}\in M_{\rho}.
$$
Moreover, it shows that $f$  can be reconstructed as a limit (when $m$ goes to infinity) of functions $S_{2^{m}d+t+(d-1)/2}( f)$ which were  constructed by using only the values of  the Radon transform $R f(x_{\nu})$.

\subsection{ Exact formulas for Fourier coefficients  of  a bandlimited function $f$  on $S^{d}$ from  a finite number of samples of  $R f$}\label{exactFCS}
 Theorem \ref{DFT} can be used to obtain a discrete inversion formula for $R$.

\begin{theorem}(Discrete Inversion Formula)\label{SamplingTh}
There exists a $c=c(M)>0$ such that for any $\omega>0$, if 
$
\rho_{\omega}=c\omega^{-1/2},
$  
then
for any $\rho_{\omega}$-lattice $M_{\rho_{\omega}}=\{x_{\nu}\}_{\nu=1}^{m_{\omega}}$ of $S^{d}$, there exist positive weights
$
\mu_{\nu}\asymp \omega^{-d/2}, 
 $
 such that for every  function $f$  in ${\bf E}_{\omega}(S^{d})$ the  Fourier coefficients $c^{i}_{k}\left(R f\right)$  of its  Radon transform, i.e.
 $$
Rf(x)=\sum_{i,k} c^{i}_{k}\left( R f\right)\times Y^{i}_{k}(x),\>\>\>\>\>\>\>\>\>k(k+1)\leq \omega,\>\>\>\>\>x\in S^{d},
 $$
  are given by the formulas 
 \begin{equation}
c^{i}_{k}\left(R f\right)=\sum_{\nu=1}^{m_{\omega}}\mu_{\nu}\times \left(R f\right)(x_{\nu})\times Y^{i}_{k}(x_{\nu}).
\end{equation}
The function $f$ can be reconstructed by means of  the formula
\begin{equation}\label{Exact555}
f=\frac{\sqrt{\pi}}{\Gamma((d+1)/2)}\sum_{k}\sum_{i}\frac{c^{i}_{k}\left(R f\right)}{r_{k}}\times Y_{k}^{i},
\end{equation}
in which $k$ runs over all natural even numbers  such that $k(k+1)\leq \omega$ and $r_{k}$ are defined in (\ref{r}).
\end{theorem}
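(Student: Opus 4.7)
The plan is to reduce the statement to the discrete Fourier formula of Theorem \ref{DFT} applied to the Radon transform of $f$, combined with the inversion formula of Theorem \ref{SRT}. Three facts are needed: (i) $Rf$ is bandlimited whenever $f$ is; (ii) $S^{d}$ is a rank-one compact symmetric space, hence homogeneous in the sense of \S\ref{homman}, so Theorem \ref{DFT} is available; (iii) the explicit inversion of $R$ on the even part of $L_{2}(S^{d})$ is given by (\ref{inverse}).

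First, I would establish (i). The Fourier--Laplace expansion of $R$,
\[
R(f)=\pi^{-1/2}\Gamma((d+1)/2)\sum_{i,k} r_{k}\,c_{i,k}(f)\,Y_{k}^{i},
\]
shows that $R$ acts as a Fourier multiplier that leaves each eigenspace of $\Delta_{S^{d}}$ invariant. In particular, if $f\in{\bf E}_{\omega}(S^{d})$ then its Fourier expansion is supported on indices $k$ with $-\Delta_{S^{d}}$-eigenvalue at most $\omega$, and the same is true for $Rf$; hence $Rf\in{\bf E}_{\omega}(S^{d})$. Because $S^{d}=SO(d+1)/SO(d)$ is a rank-one compact symmetric space, the Casimir operator $\mathcal{L}$ coincides (up to sign) with $\Delta_{S^{d}}$ (see \S\ref{homman}, case 3), so the bandlimited class ${\bf E}_{\omega}(\mathcal{L})$ used in Theorem \ref{DFT} is exactly the class ${\bf E}_{\omega}(S^{d})$ appearing in the statement.

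Next, with (i) and (ii) in hand, I would apply Theorem \ref{DFT} directly to the function $Rf$. This produces a constant $c=c(S^{d})>0$ and, for $\rho_{\omega}=c(\omega+1)^{-1/2}\asymp c\omega^{-1/2}$, any $\rho_{\omega}$-lattice $\{x_{\nu}\}_{\nu=1}^{m_{\omega}}\subset S^{d}$ and positive weights $\mu_{\nu}\asymp\omega^{-d/2}$ such that
\[
c_{k}^{i}(Rf)=\int_{S^{d}}(Rf)\,\overline{Y_{k}^{i}}\,dx
=\sum_{\nu=1}^{m_{\omega}}\mu_{\nu}\,(Rf)(x_{\nu})\,\overline{Y_{k}^{i}(x_{\nu})}
\]
for every index $k$ with eigenvalue $\le\omega$ (the conjugation being absorbed into the choice of orthonormal basis as in the excerpt's conventions). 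This is the asserted discrete formula for the Fourier coefficients of $Rf$.

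Finally, for the reconstruction formula (\ref{Exact555}), I would invoke Theorem \ref{SRT}: since the odd part of $f$ lies in the kernel of $R$ and every $f\in{\bf E}_{\omega}(S^{d})$ that one wants to reconstruct is taken to be even, formula (\ref{inverse}) gives
\[
f=R^{-1}(Rf)=\frac{\sqrt{\pi}}{\Gamma((d+1)/2)}\sum_{k\ \text{even}}\sum_{i}\frac{c_{k}^{i}(Rf)}{r_{k}}\,Y_{k}^{i},
\]
with the sum restricted to $k$ such that the corresponding eigenvalue is at most $\omega$. Substituting the discrete expression for $c_{k}^{i}(Rf)$ yields (\ref{Exact555}). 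The only genuine point requiring care is step (i) together with the identification $\mathcal{L}=\Delta_{S^{d}}$; every other step is a direct invocation of results already recorded in the excerpt, so there is no substantive obstacle beyond bookkeeping of normalizations and of the parity restriction $k$ even that makes $r_{k}\neq 0$.
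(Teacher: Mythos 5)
Your proposal is correct and takes essentially the same route as the paper: the paper also obtains the discrete formula for the Fourier coefficients of $Rf$ by combining the positive cubature formula of Theorem \ref{cubature} with the product property on the rank-one symmetric space $S^{d}$, and then reconstructs $f$ via the spherical inversion formula (\ref{inverse}). The only cosmetic difference is that you invoke the packaged Theorem \ref{DFT} (whose proof is precisely this cubature-plus-product argument, resting on Theorem \ref{prodthm}), whereas the paper inlines the argument using the sharper rank-one product Theorem \ref{prodthm-2}, which affects only the unspecified constant $c(M)$; your explicit attention to the identification $\mathcal{L}=\Delta_{S^{d}}$ and to the parity restriction ($k$ even, $f$ even) is sound bookkeeping that the paper leaves implicit.
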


\begin{proof}An application of Theorem \ref{prodthm-2} shows that if $k(k+1)\leq \omega$ then  every product 
$
 Y^{i}_{k}\overline{Y^{j}_{k}}$, where $k(k+1)\leq \omega$ belongs to   ${\bf E}_{2\omega}(S^{d})$.

By  Theorem \ref{cubature} there exists  a  positive constant $c=c(M)$,    such  that if  $\rho_{\omega}=c\omega^{-1/2}$, then
for any $\rho_{\omega}$-lattice $M_{\rho_{\omega}}=\{x_{\nu}\}^{m_{\omega}}_{\nu=1}$  on  $S^{d}$ 
there exist  a set of positive weights 
$
\mu_{\nu}\asymp \omega^{-d/2}
$
such that 
\begin{equation}
c^{i}_{k}\left( R f\right)=\int_{S^{d}} \left(R f\right)(x)\times \overline{Y^{i}_{k}(x)} dx=
\sum_{\nu=1}^{m_{\omega}}\mu_{\nu}\times \left(R f\right)(x_{\nu})\times \overline{Y^{i}_{k}(x_{\nu})}.
\end{equation}
Thus,
$$
\left(Rf\right)(x)=\sum_{k,i}c^{i}_{k}\left( R f\right)\times \overline{Y^{i}_{k}(x)}.
$$
Now the reconstruction formula of Theorem \ref{Recon} implies the formula (\ref{Exact555}).

\end{proof}

\bigskip

Without going to details we just mention that similar results can be obtained for the hemispherical Radon transform on $S^{d}$ (see \cite{Pes04c}) and for the case of Radon transform of functions on domains see \cite{Pes12}.

\section{Applications to the Radon transform on $SO(3)$}\label{group-applic}

\subsection{Approximate inversion of the Radon transform on $SO(3)$ using  generalized splines}

Let $\{(x_{1}, y_{1}),...,(x_{N}, y_{N})\}$ be a set of pairs of points from $SO(3)$. In what follows we have to assume that our {\bf Independence Assumption} (\ref{independence}) holds. It takes now the following form: there are smooth functions $\phi_{1},...,\phi_{N}$ on $SO(3)$  with
$$
\mathcal{R} \phi_{\mu}(x_{\nu}, y_{\nu})=\delta_{\nu\mu}.
$$
But it is obvious that for this condition to satisfy it is enough to assume that submanifolds $\mathcal{M}_\nu=x_\nu SO(2) y_\nu^{-1}\subset SO(3)$ are pairwise different (not necessarily disjoint).

Let $f$ be  a function in $H_{t}(SO(3)),\>\>\>$ $t> \frac{1}{2}dim \> SO(3)=3/2$ and
\begin{equation}\label{constraint}
v_{\nu}=\int_{\mathcal{M}_{\nu}}f,\>\>\>\nu=1,...,N,\>\>\>v=\{v_{\nu}\}.
\end{equation}
According to Definition \ref{interpolationdef} we use notation $s_{t}(f)=s_{t}(v)$ for a function in $ H_{t}(SO(3))$  such that for $\mathcal{M}_{\nu}=x_{\nu}  SO(2) y_{\nu}^{-1}$  it satisfies (\ref{constraint}) and minimizes the functional  
\begin{equation}\label{functional}
 u\rightarrow \|(1- 4\Delta_{SO(3)} )
^{t/2}u\|.
\end{equation}

In this situation the results of section \ref{splines} can be summarized in the following statement which was proved in \cite{BEP}.

\begin{theorem}
Let $\{(x_{1}, y_{1}),...,(x_{N}, y_{N})\}$ be a subset of  $SO(3)\times SO(3)$ such  that submanifolds $\mathcal{M}_\nu=x_\nu SO(2) y_\nu^{-1}\subset  SO(3),\>\>\nu=1,...,N,$ are pairwise different.

For a function $f$ in $H_{t}(SO(3)),\>\>\>$ $t>3/2,$ and a vector of measurements $v=\left(v_{\nu}\right)_{1}^{N}$ in (\ref{constraint}) 
 the solution of a constrained variational problem (\ref{constraint})-(\ref{functional}) is given by 
\begin{equation}
\label{splineSO}
s_{t}(f)=\sum_{k=0}^{\infty}\sum_{i,j=1}^{2k+1}c_{ij}^{k}(s_{t}(f))\mathcal{T}_{ij}^{k}=\sum_{k=0}^{\infty}trace\left(c_{k}(s_{t}(f))\mathcal{T}^{k}\right),
\end{equation}
where $\mathcal{T}_{ij}^{k}$ are the Wigner polynomials. The Fourier coefficients $c_{k}(s_{t}(f))$ of the solution are given by their matrix entries 
\begin{equation}
c_{ij}^{k}(s_{t}(f))=\frac{4\pi}{(2k+1)(1+k(k+1))^{t}}\sum_{\nu=1}^{N}\alpha_{\nu}(s_{t}(f) Y_k^i(x_{\nu})\overline{Y_k^j(y_{\nu})},
\end{equation}
where $\alpha(s_{t}(f))=\left(\alpha_{\nu}(s_{t}(f))\right)_{1}^{N}\in \mathbb{R}^{N}$ is the solution of 
\begin{equation}
\beta\alpha(s_{t}(f))=f,
\end{equation} 
with $\beta\in \mathbb{R}^{N\times N}$ given by 

\begin{align}
    \beta_{\nu\mu}= \sum_{k=0}^\infty (1+k(k+1))^{-t} C_k^{\frac12}(x_\nu\cdot y_\nu) C_k^{\frac12}(x_\mu\cdot y_\mu),
\end{align}
where  the Gegenbauer polynomials $C_k^{\frac12}$ are given by the formulas 
\begin{equation}
 \mathcal C_k^{\frac12} (x\cdot y)
  = \frac{4\pi}{2k+1} \sum_{i=1}^{2k+1} Y^i_k(x)\overline{Y^i_k(y)}
\end{equation}
 for all $x,y\in S^2$ and $k=0, 1, 2...\>\>$.
The function $s_{t}(f)\in H_{t}(SO(3))$ has the following properties:

\begin{enumerate}

\item $s_{t}(f)$ has the prescribed set of measurements $v\!=\left(v_{\nu}\right)_{1}^{N}$ at points $((x_{\nu}, y_{\nu}))_{1}^{N}$;

\item it minimizes the functional (\ref{functional});

\item  the solution (\ref{splineSO}) is optimal in the sense that for every sufficiently large $K>0$ it is the symmetry center of the convex bounded closed set of all functions $h$  in $H_{t}(SO(3))$  with $\|h\|_{t}\leq K$ which have the same set of measurements $v=\left(v_{\nu}\right)_{1}^{N}$ at points $((x_{\nu}, y_{\nu}))_{1}^{N}$.

\end{enumerate}

\end{theorem}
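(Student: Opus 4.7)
The plan is to recognize the statement as an explicit rendition of Theorem \ref{MainTheorem} together with Theorem \ref{optim}, specialized to $M = SO(3)$, the self-adjoint negative operator $4\Delta_{SO(3)}$, and the linear functionals $F_\nu(f) = \int_{\mathcal{M}_\nu} f$. The first step is to verify the Independence Assumption (\ref{independence}). Since the submanifolds $\mathcal{M}_\nu = x_\nu SO(2) y_\nu^{-1}$ are pairwise different, the parameter pairs $(x_\nu,y_\nu) \in S^2 \times S^2$ are pairwise distinct, so a standard bump-function construction on $S^2 \times S^2$ produces smooth $\psi_\mu$ with $\psi_\mu(x_\nu,y_\nu)=\delta_{\nu\mu}$; pulling back through $\mathcal{R}^{-1}$ using Theorem \ref{mapRT} yields smooth $\phi_\mu$ on $SO(3)$ with $\int_{\mathcal{M}_\nu}\phi_\mu = \mathcal{R}\phi_\mu(x_\nu,y_\nu) = \delta_{\nu\mu}$, which is exactly the independence condition.

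With independence in hand, I would appeal directly to Theorem \ref{MainTheorem} to obtain existence and uniqueness of the minimizer $s_t(f)$ for $t > 3/2 = (\dim SO(3))/2$, together with its Fourier expansion (\ref{Fourier Series}) and the linear system (\ref{eq:linsystem})--(\ref{eq:linsolution}) for the coefficients $\alpha_\nu$. The translation into the Wigner basis uses that $\mathcal{T}_{ij}^k$ is an eigenfunction of $\Delta_{SO(3)}$ with eigenvalue $-k(k+1)$ by (\ref{eigenvalues}), and that by the Peter--Weyl orthogonality relations the orthonormalized basis in $L_2(SO(3))$ is $\{\sqrt{2k+1}\,\mathcal{T}_{ij}^k\}$.

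The crucial computational input is the evaluation
\[
F_\nu(\mathcal{T}_{ij}^k) \;=\; \mathcal{R}\mathcal{T}_{ij}^k(x_\nu,y_\nu) \;=\; \frac{4\pi}{2k+1}\,Y_k^i(x_\nu)\,\overline{Y_k^j(y_\nu)},
\]
which is precisely formula (\ref{basis-action1}). Inserting this into the Fourier-series expression of Theorem \ref{MainTheorem} directly yields the stated matrix-entry formula for $c_{ij}^k(s_t(f))$. Similarly, substituting into (\ref{eq:linsolution}), summing over the indices $i,j$, and collapsing the double sum by the addition formula
\[
\mathcal{C}_k^{1/2}(x\cdot y) \;=\; \frac{4\pi}{2k+1}\sum_{i=1}^{2k+1} Y_k^i(x)\,\overline{Y_k^i(y)}
\]
(stated in the theorem itself) yields the closed-form expression for $\beta_{\nu\mu}$. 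Positive-definiteness of the Gram matrix, hence solvability of $\beta\,\alpha(s_t(f)) = v$, is guaranteed by the independence established in the first step.

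Properties (1) and (2) are then immediate from the construction, and property (3) is a direct application of Theorem \ref{optim} to the convex, bounded, closed set $Q(F,f,t,K)$. The main obstacle I foresee is not conceptual but rather the careful bookkeeping of the several normalization constants entering the computation, namely the factor $1/(2k+1)$ in the Peter--Weyl orthogonality, the factor $4$ in the functional $\|(1-4\Delta_{SO(3)})^{t/2}u\|$, the $4\pi/(2k+1)$ produced by the Radon transform of a Wigner polynomial, and the $(2k+1)/(4\pi)$ produced by the Gegenbauer addition formula; these must be tracked with care to arrive at precisely the form of $c_{ij}^k(s_t(f))$ and $\beta_{\nu\mu}$ claimed in the statement.
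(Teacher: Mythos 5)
Your overall route---specializing Theorems \ref{MainTheorem} and \ref{optim} to $M=SO(3)$, evaluating $F_{\nu}(\mathcal{T}^{k}_{ij})$ via (\ref{basis-action1}), and collapsing the Gram series with the addition formula---is exactly the paper's route (the paper cites \cite{BEP} for precisely this specialization). The genuine gap is your verification of the Independence Assumption. You build bump functions $\psi_{\mu}$ on $S^{2}\times S^{2}$ with $\psi_{\mu}(x_{\nu},y_{\nu})=\delta_{\nu\mu}$ and set $\phi_{\mu}=\mathcal{R}^{-1}\psi_{\mu}$; this step is illegitimate, because by Theorem \ref{mapRT} the operator $\mathcal{R}^{-1}$ is defined only on the range of $\mathcal{R}$, i.e.\ on $H^{\Delta}_{t+1/2}(S^{2}\times S^{2})$, whose elements have expansions using only products $Y^{i}_{k}(x)\overline{Y^{j}_{k}(y)}$ of \emph{equal} degree; a generic bump function does not lie there. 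The failure is not cosmetic: your intermediate claim that pairwise distinct pairs $(x_{\nu},y_{\nu})$ already imply independence is false. Since $\{g:\,gy=x\}=\{g:\,g(-y)=-x\}$, the distinct pairs $(x,y)$ and $(-x,-y)$ define the \emph{same} circle $\mathcal{M}_{\nu}$ and hence the identical functional $F_{\nu}$, so no biorthogonal family can exist for such a configuration; equivalently, every $G$ in the range of $\mathcal{R}$ satisfies $G(x,y)=G(-x,-y)$, so arbitrary interpolation at antipodal pairs is impossible there. A correct verification must use distinctness of the submanifolds themselves (distinctness of the pairs modulo the simultaneous antipodal map): for instance, prove linear independence of the $F_{\nu}$ by testing against a bump on $SO(3)$ concentrated near a point of $\mathcal{M}_{\mu}$ avoiding the finitely many intersection points with the other circles, and then obtain a biorthogonal family by finite-dimensional duality, or interpolate directly within the span of the functions $Y^{i}_{k}(x)\overline{Y^{j}_{k}(y)}$, where the antipodal invariance is built in.

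A second, smaller misapplication: you invoke Theorem \ref{MainTheorem} at $t>3/2$, but that theorem is stated---and its series (\ref{eq:linsolution}) is shown to converge absolutely---only for $t>d=\dim SO(3)=3$, since it treats functionals that are merely continuous on $H_{d/2+\varepsilon}(M)$ (deltas). To reach the claimed threshold $t>3/2$ you must use that the present $F_{\nu}$ are integrals over one-dimensional submanifolds: $F_{\nu}(f)$ is, up to a constant, the point evaluation of $\mathcal{R}f\in H_{t+1/2}(S^{2}\times S^{2})$, which is continuous precisely when $t+1/2>2$, i.e.\ $t>3/2$; moreover $|\mathcal{C}^{1/2}_{k}|\leq 1$, so the Gram series $\sum_{k}(1+k(k+1))^{-t}\,\mathcal{C}^{1/2}_{k}(x_{\nu}\cdot y_{\nu})\,\mathcal{C}^{1/2}_{k}(x_{\mu}\cdot y_{\mu})$ converges absolutely already for $t>1/2$. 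With these two repairs, the rest of your plan (Peter--Weyl normalization, the factor $4$ in $1-4\Delta_{SO(3)}$, the factor $4\pi/(2k+1)$ from (\ref{basis-action1}), and the appeal to Theorem \ref{optim} for property (3)) goes through as you describe.
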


\subsection{ A sampling theorem for Radon transform of bandlimited functions on $SO(3)$}

According to  Theorem \ref{mapRT} if $f\in H_{t}(SO(3))$ then $\mathcal{R} f\in H_{t+1/2}^{\Delta}(S^{2}\times S^{2})$. 
Integral of $f$ over the manifold $x_{\nu}SO(2) y_{\nu}^{-1}$ is the value of $\mathcal{R} f$ at $(x_{\nu}, y_{\nu})$ where $\{(x_{\nu}, y_{\nu})\}=M_{\rho}\subset (S^{2}\times S^{2})$ is a $\rho$-lattice.  Note that dimension $d$ of the manifold $S^{2}\times S^{2}$ is four. For a fixed natural $m$ and $t>3/2$ we  apply Theorem \ref{MainTheorem} to the manifold $S^{2}\times S^{2}$ and the set of distributions $F=\{\delta_{\nu}\}$ where
$$
\delta_{\nu}(g)=g(x_{\nu}, y_{\nu}),\>\>\>g\in C^{0}(S^{2}\times S^{2}),\>\>(x_{\nu},y_{\nu})\in M_{\rho}\subset S^{2}\times S^{2},
$$
to construct  spline  $s_{2^{m}d+(t+1)}(\mathcal{R} f)$  which interpolates $\mathcal{R} f$ on $M_{\rho}=\{(x_{\nu}, y_{\nu})\}$ and minimizes functional \begin{equation}\label{functional2}
u\rightarrow \|(1- 2\Delta_{S^{2}\times S^{2}} )^{2^{m-1}d+(t+1)/2}u\|,\>\>\>d=4.
\end{equation}
 Fourier coefficients of $s_{2^{m}d+(t+1)}(\mathcal{R} f)$ with respect to the basis $\left\{Y_{k_{1}}^{i}Y_{k_{2}}^{j}\right\}$ can be obtained by using Theorem \ref{MainTheorem}. 
Let $\widehat{s}_{2^{m}d+(t+1)}(\mathcal{R} f)$ be the orthogonal projection (in the norm of $H_{t}(S^{2}\times S^{2})$) of $s_{2^{m}d+(t+1)}(\mathcal{R} f)$ onto subspace 
$
H_{2^{m}d+(t+1)}^{\Delta}(S^{2}\times S^{2}).
$
It means that $\widehat{s}_{2^{m}d+(t+1)}(\mathcal{R} f)$ has a representation of the form
\begin{equation}\label{shat}
\widehat{s}_{2^{m}d+(t+1)}(\mathcal{R} f)(x,y)=\sum_{k}\sum_{ij} c_{ij}^{k}(\mathcal{R} f)Y_k^i(x)\overline{Y_k^j(y)},\>\>\>(x,y)\in S^{2}\times S^{2},
\end{equation}
where
$$
c_{ij}^{k}(\mathcal{R} f)=\int_{ S^{2}\times S^{2}}s_{2^{m}d+(t+1)}(\mathcal{R} f)(x,y)\overline{Y_k^i(x)}Y_k^j(y)dx dy
$$
are Fourier coefficients of $s_{2^{m}d+(t+1)}(\mathcal{R} f)$. 
Applying (\ref{basis-action1})  we obtain that the following function defined on $SO(3)$ 
\begin{equation}\label{bigS}
S_{2^{m}d+(t+1)}(f)(x)=\mathcal{R}^{-1}\left(\widehat{s}_{2^{m}d+(t+1)}(\mathcal{R} f)\right)(x),\>\>\>d=4
\end{equation}
has a representation 
$$
S_{2^{m}d+(t+1)}(f)(x)=\sum_{k}\sum_{ij} \frac{2k+1}{4\pi}c_{ij}^{k}(\mathcal{R} f)\mathcal{T}^{k}_{ij}(x),\>\>\>d=4.
$$
Let us stress that these functions do not interpolate $f$ in any sense. However, the following approximation results were proved in \cite{BEP}.

\begin{theorem}\label{last-thm}
If $t>3/2$ then there exist a constant $C=C(t)>0$ such that for  any $\rho$-lattice  $M_{\rho}=\{(x_{\nu}, y_{\nu})\}\subset S^{2}\times S^{2}$ with sufficiently small $\rho>0$  and any sufficiently  smooth
function $f$ on $SO(3)$ the following inequality holds true
$$
\|\left(S_{\tau}(f)-f\right)\|_{H_{t}(SO(3))}\leq
C_{1}(m)\rho^{ 2^{m+2}} \|\mathcal{R} f\|_{H_{\tau}(S^{2}\times S^{2})},.
$$
for $\tau=2^{m+2}+(t+1)$ and any $m=0, 1, ... \>\>.\>\>\>$ In particular, if a natural $k$ satisfies the inequality $t>k+3/2$, then 
\begin{equation}
\|S_{\tau}( f)-  f\|_{C^{k}(SO(3))}\leq 
C_{1}(m)\rho^{ 2^{m+2}}  \|\mathcal{R} f\|_{H_{\tau}(S^{2}\times S^{2})}
\end{equation}
for any $m=0, 1, ... .$ 
\end{theorem}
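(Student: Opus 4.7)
My plan is to mirror the strategy that led to Theorem \ref{last-thm555} on the sphere, replacing the sphere $S^d$ by the product manifold $S^{2}\times S^{2}$ (of dimension $d=4$) and the scalar transform $R$ by the group transform $\mathcal R$. The three ingredients already in place are: (i) the isomorphism property of $\mathcal R$ between $H_t(SO(3))$ and $H^{\Delta}_{t+1/2}(S^2\times S^2)$ from Theorem \ref{mapRT}, together with the intertwining relations (\ref{I1})--(\ref{I3}); (ii) the orthogonal projection that produces $\widehat s_\tau(\mathcal R f)$; and (iii) the iterated spline approximation bound of Theorem \ref{AT} transported to the four-dimensional manifold $S^2\times S^2$.

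The first step is to transport the error to the target side. Writing $\tau=2^{m+2}+(t+1)=2^m d + (t+1)$ with $d=4$, relation (\ref{I3}) gives
\[
 \|S_\tau(f)-f\|_{H_t(SO(3))} \;=\; \|\mathcal R^{-1}\!\bigl(\widehat s_\tau(\mathcal R f)-\mathcal R f\bigr)\|_{H_t(SO(3))} \;\le\; C\,\|\widehat s_\tau(\mathcal R f)-\mathcal R f\|_{H_{t+1/2}(S^2\times S^2)},
\]
since $\mathcal R f\in H^{\Delta}_{t+1/2}$ and the same is true of $\widehat s_\tau(\mathcal R f)$. Next, because $\widehat s_\tau(\mathcal R f)$ is the orthogonal projection of $s_\tau(\mathcal R f)$ onto the closed subspace $H^{\Delta}$ (in the $H_{t+1/2}$ inner product) and $\mathcal R f$ already lies in $H^{\Delta}$, the difference $\widehat s_\tau(\mathcal R f)-\mathcal R f$ is the projection of $s_\tau(\mathcal R f)-\mathcal R f$, so
\[
 \|\widehat s_\tau(\mathcal R f)-\mathcal R f\|_{H_{t+1/2}(S^2\times S^2)} \;\le\; \|s_\tau(\mathcal R f)-\mathcal R f\|_{H_{t+1/2}(S^2\times S^2)}.
\]

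The third step is the spline approximation estimate on $S^2\times S^2$. Applying the iterated variational spline approximation theory (a Sobolev-norm analogue of Theorem \ref{AT}, available on any compact Riemannian manifold) to the compact manifold $M=S^2\times S^2$ with $d=4$ and to the function $g=\mathcal R f$ with $\tau = 2^m d+(t+1)$, one obtains
\[
 \|s_\tau(\mathcal R f)-\mathcal R f\|_{H_{t+1/2}(S^2\times S^2)} \;\le\; C(t,m)\,\rho^{2^{m+2}}\,\|\mathcal R f\|_{H_\tau(S^2\times S^2)}.
\]
Chaining the three estimates yields the first inequality of the theorem. The second inequality follows from the Sobolev embedding $H_t(SO(3))\hookrightarrow C^k(SO(3))$ whenever $t>k+3/2$, since $\dim SO(3)=3$.

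The main obstacle is step three: the Approximation Theorem \ref{AT} is stated in $C^k$-norm, but here one needs the $H_{t+1/2}$-norm version. Establishing it requires a bootstrap argument: one first proves a Poincaré-type inequality $\|h\|_{L_2(M)}\le C\rho^{2d}\|(1-L)^d h\|_{L_2(M)}$ for functions $h$ that vanish on a $\rho$-lattice (so that it applies to $h = \mathcal R f-s_\tau(\mathcal R f)$), then uses the variational identity $\|(1-L)^{\tau/2}(\mathcal R f-s_\tau(\mathcal R f))\|\le \|(1-L)^{\tau/2}\mathcal R f\|$ inherent in the minimization property defining $s_\tau$, and iterates $m$ times to convert the base exponent $\rho^{2d}$ into $\rho^{2^m d}=\rho^{2^{m+2}}$. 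Controlling the constants that accumulate through this iteration, and upgrading from the $L_2$-norm to the $H_{t+1/2}$-norm via an interpolation (or by starting the iteration from a slightly higher base regularity), is the chief technical point; everything else reduces to a bounded application of $\mathcal R^{-1}$, a projection contraction, and Sobolev embedding.
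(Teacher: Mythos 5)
Your proposal is correct and follows essentially the same route as the paper's proof (which the survey defers to \cite{BEP} and which parallels the spherical case, Theorem \ref{last-thm555}): transport the error through the near-isometry $\mathcal{R}^{-1}:H^{\Delta}_{t+1/2}(S^2\times S^2)\to H_t(SO(3))$, use that the projection onto $H^{\Delta}$ is non-expansive (it is a diagonal truncation in the basis $Y_{k_1}^i(x)\overline{Y_{k_2}^j(y)}$, hence orthogonal in every Sobolev norm), and invoke the Sobolev-norm spline approximation estimate on the $4$-dimensional manifold $S^2\times S^2$, obtained from the Poincar\'e-type inequality for functions vanishing on a $\rho$-lattice combined with the variational orthogonality $\|g-s_\tau(g)\|_{H_\tau}\leq\|g\|_{H_\tau}$ and the doubling iteration. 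You also correctly isolate the only nontrivial ingredient (the $H_s$-version of Theorem \ref{AT}) and sketch the standard bootstrap by which it is proved in \cite{Pes04a}, \cite{Pes04c}, \cite{BEP}.
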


For an $\omega>0$ let us consider the space ${\bf E}_{\omega}(SO(3))$ of $\omega$-bandlimited functions on $SO(3)$ i.e. the span of all Wigner functions $T_{ij}^{k}$ with $k(k+1)\leq \omega$. As the formulas (\ref{eigenvalues}) and (\ref{basis-action1}) show the Radon transform of such function is $\omega$-bandlimited on $S^{2}\times S^{2}$ in the sense its Fourier expansion involves only functions $Y^{i}_{k}\overline{Y^{j}_{k}}$ which are eigenfunctions of $\Delta_{S^{2}\times S^{2}}$ with eigenvalue $-2k(k+1)\geq -2\omega$. Let ${\bf \mathcal{E}}_{\omega}(S^{2}\times S^{2})$ be the span of $Y_k^i(\xi)\overline{Y_k^j(\eta)}$ with $k(k+1)\leq  \omega$. Thus
 \begin{equation}\label{band-band}
  \mathcal{R}: {\bf E}_{\omega}(SO(3))\rightarrow{\bf  \mathcal{E}}_{\omega}(S^{2}\times S^{2}).
\end{equation}
For $f\in {\bf E}_{\omega}(SO(3))$ the following Bernstein-type inequality holds $$
\|(1-2\Delta_{S^{2}\times S^{2}})^{\tau}\mathcal{R} f\|_{L^{2}(S^{2}\times S^{2})}  \leq (1+4\omega)^{\tau}\|\mathcal{R} f\|_{L^{2}(S^{2}\times S^{2})}.
$$
For the proof of the next theorem we refer to \cite{BEP}.
\begin{theorem}(Sampling Theorem For Radon Transform).
\label{SamplingT}
If $t>3/2$ then there exist a constant $C=C(t)>0$ such that for  any $\rho$-lattice  $M_{\rho}=\{(x_{\nu}, y_{\nu})\}\subset S^{2}\times S^{2}$ with sufficiently small $\rho>0$  and any  $f\in {\bf E}_{\omega}(SO(3))$
  one has the estimate
\begin{equation}
\|\left(S_{\tau}(f)-f\right)\|_{H_{t}(SO(3))}\leq
$$
$$
2(1+4\omega)^{(t+1)/2}\left[C\rho^{2}(1+4\omega)\right]^{2^{m+1}}\|\mathcal{R}f\|_{L_{2}(S^{2}\times S^{2})},
 \end{equation}
for $\tau=2^{m+2}+(t+1)$ and any $m=0, 1, ... \>\>.\>\>\>$ In particular, if a natural $k$ satisfies the inequality $t>k+3/2$, then 
$$
\|S_{\tau}( f)-  f\|_{C^{k}(SO(3))}\leq
2(1+4\omega)^{(t+1)/2}\left[C\rho^{2}(1+4\omega)\right]^{2^{m+1}}\|\mathcal{R}f\|_{L_{2}(S^{2}\times S^{2})},
$$
for any $m=0, 1, ... .$ 
\end{theorem}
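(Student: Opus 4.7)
The plan is to combine Theorem \ref{last-thm} with the Bernstein-type inequality stated just above the theorem, exploiting the fact that $\mathcal{R}$ preserves bandlimitedness as encoded in (\ref{band-band}).

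First I would invoke Theorem \ref{last-thm} with the prescribed choice $\tau=2^{m+2}+(t+1)$. Since any $f\in\mathbf{E}_{\omega}(SO(3))$ is in particular smooth, Theorem \ref{last-thm} applies directly and yields
$$
\|S_{\tau}(f)-f\|_{H_{t}(SO(3))}\;\le\;C_{1}(m)\,\rho^{2^{m+2}}\,\|\mathcal{R}f\|_{H_{\tau}(S^{2}\times S^{2})}.
$$
Next I would invoke the bandlimited hypothesis. By (\ref{band-band}), $\mathcal{R}f\in\mathcal{E}_{\omega}(S^{2}\times S^{2})$, and every element of this subspace is a linear combination of functions $Y_{k}^{i}(x)\overline{Y_{k}^{j}(y)}$ with $k(k+1)\le\omega$, which are eigenfunctions of $1-2\Delta_{S^{2}\times S^{2}}$ with eigenvalue $1+4k(k+1)\le 1+4\omega$. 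The Bernstein-type inequality recalled in the text then gives
$$
\|\mathcal{R}f\|_{H_{\tau}(S^{2}\times S^{2})}
\;=\;\|(1-2\Delta_{S^{2}\times S^{2}})^{\tau/2}\mathcal{R}f\|_{L^{2}}
\;\le\;(1+4\omega)^{\tau/2}\,\|\mathcal{R}f\|_{L^{2}}.
$$

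To reach the stated form I would split $\tau/2=2^{m+1}+(t+1)/2$ and pair the resulting factor $(1+4\omega)^{2^{m+1}}$ with $\rho^{2^{m+2}}=(\rho^{2})^{2^{m+1}}$ to produce $[\rho^{2}(1+4\omega)]^{2^{m+1}}$. Substituting back gives
$$
\|S_{\tau}(f)-f\|_{H_{t}(SO(3))}\;\le\;C_{1}(m)\,(1+4\omega)^{(t+1)/2}\,\bigl[\rho^{2}(1+4\omega)\bigr]^{2^{m+1}}\,\|\mathcal{R}f\|_{L^{2}(S^{2}\times S^{2})},
$$
which matches the claimed bound provided $C_{1}(m)$ is absorbed into a constant of the form $2C^{2^{m+1}}$. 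The $C^{k}(SO(3))$ estimate under the hypothesis $t>k+3/2$ follows from the Sobolev embedding $H_{t}(SO(3))\hookrightarrow C^{k}(SO(3))$, valid since $\dim SO(3)=3$, applied to the preceding $H_{t}$-estimate.

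The main obstacle is a bookkeeping issue rather than a conceptual one: verifying that the $m$-dependent constant $C_{1}(m)$ from Theorem \ref{last-thm} admits a bound of the form $C_{1}(m)\le 2\,C(t)^{2^{m+1}}$ with $C(t)$ independent of $m$, so that the factor can be absorbed into the $[C\rho^{2}(1+4\omega)]^{2^{m+1}}$ term. This requires unwinding the iterative application of the Approximation Theorem \ref{AT} that underlies Theorem \ref{last-thm}, tracking how the geometric factor $C\rho^{2}$ compounds under successive doublings of the smoothness index. Once this dependence is made explicit, the rest of the argument is just the substitution described above.
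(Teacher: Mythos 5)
Your proposal is correct and is essentially the argument the paper intends: just as Theorem \ref{SSTh} is obtained from Theorem \ref{last-thm555} plus the Bernstein-type inequality on the sphere, the $SO(3)$ result follows from Theorem \ref{last-thm}, the Bernstein-type inequality for $\mathcal{R}f$ with $f\in{\bf E}_{\omega}(SO(3))$, the splitting $\tau/2=2^{m+1}+(t+1)/2$, and Sobolev embedding for the $C^{k}$ statement. Your bookkeeping concern about $C_{1}(m)$ is legitimate but resolves exactly as you suspect: the iterated use of the Approximation Theorem \ref{AT} produces the constant already in the compounded geometric form $\left(C\rho^{2}\right)^{2^{m}d}$, so it absorbs into $\left[C\rho^{2}(1+4\omega)\right]^{2^{m+1}}$ with $C=C(t)$ independent of $m$.
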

This Theorem \ref{SamplingT} shows that if 
$
\rho<C(1+\omega)^{-1/2}
$
then every $f\in {\bf E}_{\omega}(SO(3))$ is completely determined by a finite  set of values 
$$
\mathcal{R}f(x_{\nu}, y_{\nu})=\int_{M_{\nu}} f,
$$
where $M_{\nu}=x_{\nu}SO(2) y_{\nu}^{-1}\subset SO(3),\>\>\>\{(x_{\nu}, y_{\nu})\}=M_{\rho}\subset S^{2}\times S^{2}.$
Moreover, it shows that $f$  can be reconstructed as a limit (when $m$ goes to infinity) of functions $S_{2^{m+2}+t}( f)$ which were  constructed by using only the values of  the Radon transform $\mathcal{R} f(x_{\nu}, y_{\nu})$.

\subsection{ Exact formulas for Fourier coefficients  of  a bandlimited function $f$  on $SO(3)$ from  a finite number of samples of  $ \mathcal{R} f$}\label{Discrete}

Let $M_{\rho}=\{(x_{\nu}, y_{\nu})\}$ be  a metric
$\rho$-lattice of  $S^{2}\times S^{2}$.
In what follows $\mathcal{E}_{\omega}(S^{2}\times S^{2})$ will denote  the span in the space $L^{2}(S^{2}\times S^{2})$ of all $Y_k^i\overline{Y_k^j}$  with $k(k+1)\leq \omega$. Theorem \ref{DFT} implies the following exam discrete reconstruction formula which uses only  samples of $\mathcal{R} f$ on a sufficiently dense lattice.

\begin{theorem}(Discrete Inversion Formula \cite{BP})\label{SamplingTh}
There exists a $c=c(M)>0$ such that for any $\omega>0$, if 
$
\rho_{\omega}=c\omega^{-1/2},
$  
then
for any $\rho_{\omega}$-lattice $M_{\rho_{\omega}}=\{(x_{\nu}, y_{\nu})\}_{\nu=1}^{m_{\omega}}$ of $S^{2}\times S^{2}$ , there exist positive weights
$
\mu_{\nu}\asymp \omega^{-2}, 
 $
 such that for every  function $f$  in ${\bf E}_{\omega}(SO(3))$ the  Fourier coefficients $c_{i,j}^{k}\left( \mathcal{R} f\right)$  of its  Radon transform, i.e.
 $$
  \mathcal{R} f(x,y)=\sum_{i,j,k} c_{i,j}^{k}\left( \mathcal{R} f\right)\times Y^{i}_{k}(x)\overline{Y^{j}_{k}}(y),\>\>\>\>\>\>\>\>\>k(k+1)\leq \omega,\>\>\>\>\>(x,y)\in S^{2}\times S^{2},
 $$
  are given by the formulas 
 \begin{equation}
c_{i,j}^{k}\left(\mathcal{R} f\right)=\sum_{\nu=1}^{m_{\omega}}\mu_{\nu}\times \left(\mathcal{R} f\right)(x_{\nu},y_{\nu})\times Y^{i}_{k}(x_{\nu})\overline{Y^{j}_{k}}(y_{\nu}).
\end{equation}
The function $f$ can be reconstructed by means of  the formula
\begin{equation}\label{Exact}
f(g)=\sum_{k}\sum_{i,j}^{2k+1}\frac{(2k+1)}{4\pi}\times c_{i,j}^{k}\left( \mathcal{R} f\right)\times \mathcal{T}_{k}^{i,j}(g),\>\>\>\>\>g\in SO(3),
\end{equation}
in which $k$ runs over all natural numbers  such that $k(k+1)\leq \omega$.
\end{theorem}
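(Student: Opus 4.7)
The plan is to reduce the discrete inversion formula to a single application of the exact cubature formula of Theorem \ref{cubature} on $S^{2}\times S^{2}$, followed by the reconstruction identity of Theorem \ref{Recon}. The starting observation is the mapping property (\ref{band-band}): for $f\in{\bf E}_{\omega}(SO(3))$ the Radon image $\mathcal{R}f$ belongs to $\mathcal{E}_{\omega}(S^{2}\times S^{2})$, so its Fourier expansion in the orthonormal basis $\{Y_{k}^{i}(x)\overline{Y_{k}^{j}(y)}\}$ is supported on indices with $k(k+1)\leq\omega$, and each coefficient $c^{k}_{i,j}(\mathcal{R}f)$ is an $L_{2}(S^{2}\times S^{2})$ inner product of $\mathcal{R}f$ against one such basis element.

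The key step is to verify that the integrand defining $c^{k}_{i,j}(\mathcal{R}f)$ is bandlimited on $S^{2}\times S^{2}$ at a scale controlled uniformly by $\omega$. Fixing $k$ with $k(k+1)\leq\omega$, each of $Y_{k}^{i}$ and $\overline{Y_{k}^{j}}$ lies in ${\bf E}_{\omega}(\Delta_{S^{2}})$, and $\mathcal{R}f$ is bandlimited in each $S^{2}$-variable separately to $\omega$. Applying the rank-one product property (Theorem \ref{prodthm-2}) on each $S^{2}$-factor shows that the integrand is bandlimited to $2\omega$ in each variable, hence lies in the spectral subspace of $\Delta_{S^{2}\times S^{2}}=\Delta_{1}+\Delta_{2}$ with eigenvalues of absolute value at most $4\omega$. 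Invoking Theorem \ref{cubature} on the four-dimensional compact manifold $S^{2}\times S^{2}$ then produces a universal $c>0$, a $\rho_{\omega}$-lattice of scale $\rho_{\omega}\sim(4\omega)^{-1/2}\sim\omega^{-1/2}$, and positive weights $\mu_{\nu}\asymp(4\omega)^{-2}\asymp\omega^{-2}$ that integrate every such bandlimited function exactly. Applying this cubature rule to the integrand immediately yields the claimed discrete formula for $c^{k}_{i,j}(\mathcal{R}f)$.

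The reconstruction identity (\ref{Exact}) then follows by substituting these discrete Fourier coefficients into the inversion formula of Theorem \ref{Recon}; the $k$-sum is actually finite because $f\in{\bf E}_{\omega}(SO(3))$ forces only $k$ with $k(k+1)\leq\omega$ to appear. The main obstacle I anticipate is the careful bookkeeping of bandwidths: one must confirm that the constants coming out of Theorems \ref{prodthm-2} and \ref{cubature} combine into a single universal lattice scale $\rho_{\omega}\sim\omega^{-1/2}$ and a uniform weight size $\mu_{\nu}\asymp\omega^{-2}$ that simultaneously handle every admissible triple $(k,i,j)$, so that one and the same lattice serves to compute all Fourier coefficients of $\mathcal{R}f$ exactly.
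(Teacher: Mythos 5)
Your proposal is correct and follows essentially the same route as the paper: the paper derives this theorem from Theorem \ref{DFT} (itself a combination of the product property and the exact cubature formula of Theorem \ref{cubature}) applied to the homogeneous manifold $S^{2}\times S^{2}$, together with the mapping property (\ref{band-band}) and the reconstruction formula of Theorem \ref{Recon} --- exactly the ingredients you assemble. Your only deviation is unwinding Theorem \ref{DFT} and using the rank-one product property (Theorem \ref{prodthm-2}) factor-by-factor, which merely sharpens the bandwidth constant and matches the argument the paper writes out explicitly for the analogous $S^{d}$ result in subsection \ref{exactFCS}.
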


					\section{Conclusion}
					
	During last few decades there was a strong demand for  effective tools to perform interpolation, approximation, harmonic  and multiscale analyses on such manifolds as  the unit spheres $S^{2}$ and $S^{3}$ and  the rotation group of $SO(3).$			
		
In our paper we develop a  theory of variational interpolating splines 
 on  compact Riemannian manifolds.
Our consideration includes in particular such problems as
interpolation of a function by its values on a discrete set of
points and interpolation by
 values of integrals over
  a family of submanifolds.
The existence and uniqueness of interpolating variational
 spline on a Riemannian manifold is proven.
 Optimal properties of such splines are shown.
 The explicit formulas of
  variational splines in terms of the eigenfunctions of an appropriate elliptic differential  operator are found.
 It is also shown that in the case of interpolation on discrete sets of points
 variational splines converge to a function in $C^{k}$ norms on manifolds.  Variational  splines are used to establish a generalization of the Classical Sampling Theorem and to develop cubature formulas on manifolds.

In a different venue  we extend the well-developed field of time-frequency analysis based on   frames  and Shannon sampling  to compact Riemannian manifolds by constructing bandlimited localized Parseval frames on compact manifolds which have many symmetries. 

Applications of these results to the  Radon
transforms on the unit sphere $S^{d}$  and on the group of rotations $SO(3)$ are given.

The present study expands the important  field of  multiresolution analysis based on splines and frames  from Euclidean spaces to compact Riemannian manifolds thus giving the means for modeling and analyzing various important complex phenomena.

\end{document}